\documentclass{amsart}

\usepackage{amsmath, amssymb, amsthm, setspace, tikz-cd, hyperref}
\usepackage[shortcuts]{extdash}
\theoremstyle{plain}
\newtheorem{theorem}{Theorem}[section]
\newtheorem*{theoremA}{Theorem A}
\newtheorem*{theoremB}{Theorem B}
\newtheorem*{theoremC}{Theorem C}
\newtheorem{lemma}[theorem]{Lemma}
\newtheorem{proposition}[theorem]{Proposition}
\newtheorem{corollary}[theorem]{Corollary}

\theoremstyle{definition}
\newtheorem{definition}[theorem]{Definition}
\newtheorem{exampletemp}[theorem]{Example}
\newtheorem{remark}[theorem]{Remark}

\newtheorem*{notation}{Notation}

\newenvironment{example}{\begin{exampletemp}}{\hfill\qed\end{exampletemp}}
\hfuzz 4pt

\begin{document}

\title[Cuntz-Pimsner Algebras, Crossed Products, and $K$\=/Theory]{Cuntz-Pimsner Algebras, Crossed Products, \\ and $K$\=/Theory}
\author{Christopher P. Schafhauser}
\address{Department of Mathematics \\ University of Nebraska -- Lincoln \\ 203 Avery Hall \\ Lincoln, NE 68588--0130}
\email{cschafhauser2@math.unl.edu}
\subjclass[2010]{Primary: 46L05}
\keywords{Cuntz-Pimsner Algebras, $C^*$\=/correspondences, Crossed Products, Circle Actions, AF-Algebras, AF-Embeddability, Quasidiagonality, Stable Finiteness, $K$\=/Theory}
\date{\today}

\begin{abstract}
  Suppose $A$ is a $C^*$-algebra and $H$ is a $C^*$\=/correspondence over $A$.  If $H$ is regular in the sense that the left action of $A$ is faithful and is given by compact operators, then we compute the $K$\=/theory of $\mathcal{O}_A(H) \rtimes \mathbb{T}$ where the action is the usual gauge action.  The case where $A$ is an AF-algebra is carefully analyzed.  In particular, if $A$ is AF, we show $\mathcal{O}_A(H) \rtimes \mathbb{T}$ is AF.  Combining this with Takai duality and an AF-embedding theorem of N. Brown, we show the conditions AF-embeddability, quasidiagonality, and stable finiteness are equivalent for $\mathcal{O}_A(H)$.  If $H$ is also assumed to be regular, these finiteness conditions can be characterized in terms of the ordered $K$\=/theory of $A$.
\end{abstract}

\maketitle

\section{Introduction}

A $C^*$\=/correspondence consists of $C^*$\=/algebras $A$ and $B$ and an $A$--$B$ bimodule $H$ together with a complete $B$--valued inner product which satisfies certain conditions.  In this case, $H$ can be thought of as a multi\=/valued, partially defined morphism from $A$ to $B$.  In fact if $H$ is ``row\=/finite'' then $H$ induces a positive group homomorphism $[H] : K_0(A) \rightarrow K_0(B)$ (see Section \ref{sec:KTheory}).  In the special case when $A = B$, Pimsner introduced in \cite{PimsnerCorr} a certain $C^*$\=/algebra $\mathcal{O}_A(H)$, now called the Cuntz--Pimsner algebra, which can be thought of as the crossed product of $A$ by the generalized morphism $H$ (see also \cite{KatsuraCorr1, KatsuraCorr2} for example).

The class of Cuntz--Pimsner algebras includes many naturally occurring $C^*$\=/algebras such as crossed products by $\mathbb{Z}$, crossed products by partial automorphisms, graph algebras, and much more (see \cite[Section 3]{KatsuraCorr1} for other examples).  Much of the structure of $\mathcal{O}_A(H)$ can be recovered from the underlying correspondence $H$ and the algebra $A$.  For instance, the ideal structure of $\mathcal{O}_A(H)$ is extensively studied in \cite{KatsuraCorr3}, and there is a Pimsner--Voiculescu six--term exact sequence in $K$\=/theory for $\mathcal{O}_A(H)$ given in \cite[Theorem 8.6]{KatsuraCorr2} (originally shown in \cite[Theorem 4.9]{PimsnerCorr} with mild hypotheses).

There is a natural gauge action $\gamma$ of $\mathbb{T}$ on  $\mathcal{O}_A(H)$.  By Takai duality, $\mathcal{O}_A(H)$ is Morita equivalent to $(\mathcal{O}_A(H) \rtimes_\gamma \mathbb{T}) \rtimes_{\hat{\gamma}} \mathbb{Z}$.  Thus understanding the structure of $\mathcal{O}_A(H) \rtimes_\gamma \mathbb{T}$ can give some insight into the structure of the algebra $\mathcal{O}_A(H)$. In this note, we calculate the $K$\=/theory of $\mathcal{O}_A(H) \rtimes_\gamma \mathbb{T}$.  In particular, we have the following result (see Section \ref{sec:SkewProduct} for a proof).

\begin{theoremA}
Suppose $A$ is a $C^*$-algebra and $H$ is a row\=/finite, faithful $C^*$\=/correspondence over $A$, then
\[ K_*(\mathcal{O}_A(H) \rtimes_{\gamma} \mathbb{T}) \cong \underset{\longrightarrow}{\lim} \, (K_*(A), [H]). \]
Moreover, the isomorphism preserves the order structure on $K_0$ and intertwines the automorphism $[\hat{\gamma}]$ on $K_*(\mathcal{O}_A(H) \rtimes_{\gamma} \mathbb{T})$ and the automorphism of $\underset{\longrightarrow}{\lim} \, (K_*(A), [H])$ induced by $[H]$.
\end{theoremA}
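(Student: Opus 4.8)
The plan is to reduce the computation to continuity of $K$-theory by first producing a concrete inductive-limit model for $\mathcal{O}_A(H) \rtimes_\gamma \mathbb{T}$, and the natural vehicle for this is the skew product. Since $\gamma$ is a gauge action of the compact group $\mathbb{T}$, its dual $\hat\gamma$ is an action of $\mathbb{Z} = \hat{\mathbb{T}}$, so the crossed product carries a $\mathbb{Z}$-grading implemented by $\hat\gamma$. I would make this explicit by identifying $\mathcal{O}_A(H) \rtimes_\gamma \mathbb{T}$ with a skew-product Cuntz--Pimsner algebra $\mathcal{O}_{\tilde A}(\tilde H)$, where $\tilde A = c_0(\mathbb{Z}, A)$ and $\tilde H$ is the correspondence over $\tilde A$ obtained from $H$ by shifting the $\mathbb{Z}$-index: a vector of $\tilde H$ at index $n$ carries a right action of the copy of $A$ at index $n$ and a left action of the copy at index $n+1$, so that the creation operators from $\tilde H$ strictly raise the grading. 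The cleanest way to pin down this isomorphism is a gauge-invariant uniqueness theorem identifying the universal skew-product algebra with the concrete crossed product.

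Because $\tilde H$ strictly raises the $\mathbb{Z}$-grading, there is no covariance obstruction: $\mathcal{O}_{\tilde A}(\tilde H)$ coincides with its own gauge-invariant core and is an inductive limit of compacts on the truncated Fock modules. This mirrors the fact that the skew product of a row-finite graph by its canonical $\mathbb{Z}$-cocycle is AF. The hypotheses enter precisely here: faithfulness of the left action guarantees that $A$ embeds into each stage and that passing to the Fock picture loses no $K$-theoretic information, while row-finiteness (the left action being by compacts) guarantees that $[H]$ is a well-defined positive endomorphism of $K_*(A)$ and that each stage has the $K$-theory of $A$. After passing to a cofinal (telescoped) subsystem, one arrives at an inductive system of algebras each Morita equivalent to $A$ whose connecting maps induce $[H]$ on $K$-theory; this last identification of the connecting maps with $[H]$ is the content of Section~\ref{sec:KTheory}.

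Granting the structural step, the remaining assertions are essentially formal. By continuity of $K$-theory,
\[ K_*(\mathcal{O}_A(H) \rtimes_\gamma \mathbb{T}) \cong \varinjlim\big(K_*(A), [H]\big). \]
The order statement follows because ordered $K_0$ is preserved under the Morita equivalences between the stages and $A$, and under inductive limits with positive connecting maps, so $K_0^+$ of the limit is the union of the images of $K_0^+(A)$; since $[H]$ is positive this is exactly the order on $\varinjlim(K_0(A),[H])$. For the final clause I would track $\hat\gamma$ through the skew-product isomorphism: $\hat\gamma$ shifts the $\mathbb{Z}$-grading by one, hence acts on the inductive system as the level shift, and on the limit the level shift is precisely the automorphism induced by the (now invertible) connecting map $[H]$. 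Thus the isomorphism intertwines $[\hat\gamma]$ with the automorphism of $\varinjlim(K_*(A),[H])$ induced by $[H]$.

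The main obstacle I expect is the structural first step: setting up $\tilde H$ with the correct bimodule structure and proving $\mathcal{O}_A(H) \rtimes_\gamma \mathbb{T} \cong \mathcal{O}_{\tilde A}(\tilde H)$ rigorously, verifying that the covariance relations survive the grading shift and that the regular hypotheses yield the clean inductive-limit form with the telescoped stages Morita equivalent to $A$. Once that identification and the matching of connecting maps with $[H]$ are secured, the $K$-theory isomorphism, the order statement, and the compatibility with the dual action are comparatively routine.
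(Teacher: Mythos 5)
Your proposal follows essentially the same route as the paper: identify $\mathcal{O}_A(H)\rtimes_\gamma\mathbb{T}$ with a skew-product Cuntz--Pimsner algebra over $C_0(\mathbb{Z},A)$ (the paper obtains this from the Hao--Ng isomorphism rather than a direct gauge-invariant uniqueness argument, but the resulting correspondence is exactly your $\tilde H$), decompose it as an inductive limit of the truncated pieces, each isomorphic to $\mathbb{K}(\mathcal{F}_A(H))$ and hence Morita equivalent to $A$, check that the connecting maps induce $[H]$, and conclude by continuity of $K$-theory with the dual action acting as the shift. The plan is correct and all the obstacles you flag are the ones the paper actually works through.
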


The proof involves a certain skew product construction similar to the graph skew product construction developed by Kumjian and Pask in \cite{KumjianPask} for graph $C^*$\=/algebras.  In particular, we build correspondences $H^n$ over $A^n$ for $n \in \mathbb{Z} \cup \{\infty\}$ such that
\[ \mathcal{O}_A(H) \rtimes_\gamma \mathbb{T} \cong \mathcal{O}_{A^\infty}(H^\infty) \cong \underset{\longrightarrow}{\lim} \, \mathcal{O}_{A^n}(H^n). \]
The $K$\=/theory of $\mathcal{O}_A(H) \rtimes_\gamma \mathbb{T}$ is calculated by showing that each $\mathcal{O}_{A^n}(H^n)$, is Morita equivalent to $A$.  The result then follows by the continuity and stability of $K$\=/theory.

The case where $A$ is an AF\=/algebra is also examined.  In particular, we have the following result (see Section \ref{sec:AFAlgebras}).

\begin{theoremB}
If $A$ is an AF\=/algebra and $H$ is a separable $C^*$\=/correspondence over $A$, then $\mathcal{O}_A(H) \rtimes_\gamma \mathbb{T}$ is AF.
\end{theoremB}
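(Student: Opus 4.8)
The plan is to exploit the skew product decomposition already introduced for Theorem A. The construction produces a $C^*$-algebra $A^\infty = \bigoplus_{n\in\mathbb{Z}} A$ and a correspondence $H^\infty$ over it with
\[ \mathcal{O}_A(H)\rtimes_\gamma\mathbb{T} \cong \mathcal{O}_{A^\infty}(H^\infty) \cong \varinjlim_n \mathcal{O}_{A^n}(H^n), \]
where $A^n = \bigoplus_{j=-n}^n A$ and $H^n$ is the truncation of $H^\infty$ to these levels; this part of the construction uses neither faithfulness nor row\=/finiteness of $H$. Since a countable direct limit of AF\=/algebras is again AF, and all algebras in sight are separable because $A$ is AF and $H$ is separable, it suffices to prove that each $\mathcal{O}_{A^n}(H^n)$ is AF.

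The key structural feature is that $H^\infty$ strictly raises the $\mathbb{Z}$\=/grading: each generator shifts the grading up by one, so that $(H^\infty)^{\otimes k}$ carries level $j$ to level $j+k$. Truncating to the $2n+1$ levels $-n,\dots,n$, the correspondence $H^n$ therefore satisfies $(H^n)^{\otimes(2n+1)} = 0$; that is, $H^n$ is nilpotent. This is the correspondence analogue of the fact that the graph skew product of Kumjian--Pask has no cycles, and it is the reason one expects AF behavior. First I would record this nilpotence precisely, tracking the grading bookkeeping through the tensor powers.

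Next I would prove that the Cuntz--Pimsner algebra of a nilpotent correspondence $K$ over an AF\=/algebra $B$ is AF. Because $K^{\otimes N}=0$, the Fock module $\mathcal{F}(K) = \bigoplus_{k=0}^{N-1} K^{\otimes k}$ is a countably generated Hilbert $B$\=/module, and I expect $\mathcal{O}_B(K)$ to be realized concretely as $\mathcal{K}(\mathcal{F}(K))$: the creation operators and their adjoints generate the compacts, and nilpotence should force the Fock (Toeplitz) representation to already be Cuntz--Pimsner covariant, collapsing the defining quotient. Granting this, the AF conclusion follows from a lemma I would isolate beforehand: \emph{if $E$ is a countably generated Hilbert module over an AF\=/algebra $B$, then $\mathcal{K}(E)$ is AF.} This is immediate from Kasparov stabilization, which realizes $\mathcal{K}(E)$ as a hereditary subalgebra of $\mathcal{K}(\ell^2(B)) \cong B\otimes\mathcal{K}$, an AF\=/algebra all of whose hereditary subalgebras are AF.

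The main obstacle is the middle step: identifying $\mathcal{O}_{A^n}(H^n)$ with compacts on the Fock module \emph{without} any faithfulness or row\=/finiteness hypotheses. Here one must handle Katsura's covariance ideal $J_{H^n}$ with care, since the left action of $A^n$ need neither be injective nor take values in the compacts. I anticipate that nilpotence makes this tractable: on each graded summand the operators in question are genuinely compact and the covariance relation holds automatically at the top level, so the Cuntz--Pimsner quotient of the finite Toeplitz algebra is exactly $\mathcal{K}(\mathcal{F}(H^n))$. Verifying this identification, rather than the AF conclusion it yields, is where the real work lies.
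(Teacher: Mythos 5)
Your overall strategy (reduce to truncated pieces of the skew product and show each piece is AF) is reasonable, but the key step is wrong as sketched. The claim that nilpotence of $K = H^n$ forces the Fock representation to be covariant, so that $\mathcal{O}_{B}(K) \cong \mathbb{K}(\mathcal{F}(K))$, fails: covariance breaks at the \emph{bottom} of the Fock module, not the top. For $a \in J_K$ the operator $\pi(a)$ acts by left multiplication on the vacuum summand $B \subseteq \mathcal{F}(K)$, while $\varphi(\lambda(a))$ annihilates it, so the Fock representation is covariant only when $J_K = 0$. A two-level example already kills the proposed isomorphism: take $B = \mathbb{C}\oplus\mathbb{C}$ and $K = \mathbb{C}$ with the first summand acting on the left and the inner product valued in the second; then $K^{\otimes 2} = 0$, $\mathbb{K}(\mathcal{F}(K)) \cong \mathbb{C}\oplus M_2(\mathbb{C})$, but $\mathcal{O}_B(K) \cong M_2(\mathbb{C})$ (this is just the graph algebra of a single edge). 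Two further problems: without row-finiteness the operators $\pi(b)|_{K^{\otimes k}} = \lambda(b)\otimes 1$ need not be compact, so $\mathcal{T}_B(K)$ does not even sit inside $\mathbb{K}(\mathcal{F}(K))$; and even if you could embed $\mathcal{O}_B(K)$ into $\mathbb{K}(\mathcal{F}(K))$, that would prove nothing, since $C^*$-subalgebras of AF-algebras need not be AF --- that is precisely the phenomenon (AF-embeddability) the paper is studying. Finally, your inductive limit over the two-sided truncations $\bigoplus_{j=-n}^{n} A$ differs from Proposition \ref{prop:InductiveLimit} (which truncates only from above), and the compatibility of the Katsura ideals $J_{H^n} \subseteq J_{H^{n+1}}$ needed for the connecting maps to exist and be injective is asserted rather than checked.

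The paper avoids all of this by a different decomposition: after reducing to the non-degenerate case, it writes $1 = \sum_n j_n(1)$ as a strictly convergent sum of orthogonal projections in $M(\mathcal{O}_{A^\infty}(H^\infty))$ and invokes Proposition \ref{lem:AFCorners}, so it suffices that each corner $j_n(1)\,\mathcal{O}_{A^\infty}(H^\infty)\,j_n(1)$ be AF. Proposition \ref{prop:RangeOfPsi} identifies each such corner with the fixed-point algebra $\mathcal{O}_A(H)^\gamma$, and AF-ness of the core follows from Katsura's filtration $\mathcal{O}_A(H)^\gamma = \varinjlim A_{[0,n]}$ by iterated extensions of algebras Morita equivalent to ideals of $A$ (Corollary \ref{cor:AFCore}). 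That filtration of the core is the correct replacement for your ``compacts on the Fock module'' step: it handles the vacuum-level failure of covariance by extensions rather than by pretending the quotient collapses, and it needs neither row-finiteness nor faithfulness. If you want to salvage your route, you would need to prove directly that the Cuntz--Pimsner algebra of a nilpotent correspondence over an AF-algebra is AF, and the natural way to do that is again via the (now finite) core filtration, not via $\mathbb{K}(\mathcal{F}(K))$.
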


We end this note with some applications to AF\=/embeddability.  In \cite{BrownAFE}, N.~Brown characterized the AF-embeddability of crossed products of AF-algebras by the integers (see Theorem \ref{thm:BrownsEmbedding} below).  In Section \ref{sec:BrownsTheorem}, we prove the following generalization of Brown's Embedding Theorem.

\begin{theoremC}
Suppose $A$ is an AF algebra and $H$ is a separable $C^*$\=/correspondence over $A$.  Then the following are equivalent:
\begin{enumerate}
  \item $\mathcal{O}_A(H)$ is AF--embeddable;
  \item $\mathcal{O}_A(H)$ is quasidiagonal;
  \item $\mathcal{O}_A(H)$ is stably finite.
\end{enumerate}
Moreover, if $H$ is row-finite and faithful, the above conditions are equivalent to
\begin{enumerate}\setcounter{enumi}{3}
  \item if $x \in K_0(A)$ and $[H](x) \leq x$, then $[H](x) = x$.
\end{enumerate}
\end{theoremC}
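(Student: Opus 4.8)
The plan is to reduce everything to a crossed product by $\mathbb{Z}$ and then invoke Brown's Embedding Theorem. Write $B = \mathcal{O}_A(H) \rtimes_\gamma \mathbb{T}$. Since $A$ is AF and $H$ is separable, Theorem~B shows $B$ is AF, and separability of $A$ and $H$ makes $B$ separable. By Takai duality, $\mathcal{O}_A(H)$ is stably isomorphic to $B \rtimes_{\hat\gamma} \mathbb{Z}$, a crossed product of a separable AF-algebra by a single automorphism, which is exactly the setting of Theorem~\ref{thm:BrownsEmbedding}. Each of the properties in (1)--(3) is preserved under stable isomorphism for separable $C^*$-algebras, so it suffices to prove the equivalences for $B \rtimes_{\hat\gamma}\mathbb{Z}$ in place of $\mathcal{O}_A(H)$.

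First I would record the implications valid for an arbitrary separable $C^*$-algebra: an AF-embeddable algebra is a subalgebra of an AF-algebra and hence quasidiagonal, giving (1) $\Rightarrow$ (2); and every quasidiagonal $C^*$-algebra is stably finite (quasidiagonality passes to matrix amplifications and forbids a proper isometry), giving (2) $\Rightarrow$ (3). The content is therefore (3) $\Rightarrow$ (1), and this is precisely Theorem~\ref{thm:BrownsEmbedding} applied to $B \rtimes_{\hat\gamma}\mathbb{Z}$: for a crossed product of an AF-algebra by $\mathbb{Z}$, stable finiteness already forces AF-embeddability. Transporting back through the stable isomorphism closes the loop and establishes (1) $\Leftrightarrow$ (2) $\Leftrightarrow$ (3).

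For the final clause I would use the explicit $K$-theoretic form of Theorem~\ref{thm:BrownsEmbedding}: $B \rtimes_{\hat\gamma}\mathbb{Z}$ is AF-embeddable if and only if the order automorphism $[\hat\gamma]$ of $K_0(B)$ satisfies $[\hat\gamma](y) \le y \Rightarrow [\hat\gamma](y) = y$ for all $y \in K_0(B)$. By Theorem~A the ordered group with automorphism $(K_0(B), [\hat\gamma])$ is isomorphic to $\big(\varinjlim (K_0(A), [H]), \bar{[H]}\big)$, where $\bar{[H]}$ is the shift automorphism of the inductive limit induced by $[H]$. Writing $\phi = [H]$ and $\iota_n \colon K_0(A) \to \varinjlim(K_0(A),\phi)$ for the canonical maps, I would prove the translation lemma that the condition $\phi(x) \le x \Rightarrow \phi(x) = x$ on $K_0(A)$ is equivalent to the corresponding condition for $\bar\phi$ on the limit. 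In one direction, if $\bar\phi(y) \le y$ with $y = \iota_n(x)$, then eventual positivity in the limit produces some $k$ with $\phi^{k+1}(x) \le \phi^k(x)$; applying (4) to $x' = \phi^k(x)$ gives $\phi^{k+1}(x) = \phi^k(x)$, whence $\bar\phi(y) = y$.

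The reverse implication is where row-finiteness and faithfulness of $H$ enter, and it is the step I expect to be the main obstacle. Given $x$ with $\phi(x) \le x$, applying the limit condition to $\iota_0(x)$ only yields $\phi^m(x) = \phi^{m+1}(x)$ for some $m \ge 0$, not $\phi(x) = x$ outright. Taking $m$ minimal and setting $u = \phi^{m-1}(x) - \phi^m(x)$, one finds $u \ge 0$ and $\phi(u) = \phi^m(x) - \phi^{m+1}(x) = 0$; if $m \ge 1$ then minimality gives $u \ne 0$, producing a nonzero positive element of $\ker\phi$. But faithfulness of $H$ (with $A$ AF and $H$ row-finite, so that $\phi = [H]$ is a well-defined positive homomorphism) forces $\ker[H] \cap K_0(A)_+ = \{0\}$: a positive class is represented by a genuine projection $p$, and $[H]([p]) = [\varphi(p)H] = 0$ forces $\varphi(p) = 0$, hence $p = 0$ by injectivity of the left action. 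This contradiction gives $m = 0$, i.e. $\phi(x) = x$, completing the translation and with it the equivalence of (4) with the finiteness conditions. A minor technical point to dispatch along the way is the possible non-unitality of $B$, which is handled by passing to a unitization or to the stabilization, where Theorem~\ref{thm:BrownsEmbedding} still applies.
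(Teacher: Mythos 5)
Your proposal is correct and follows essentially the same route as the paper: Theorem B plus Takai duality reduces (1)--(3) to Brown's Embedding Theorem for $(\mathcal{O}_A(H)\rtimes_\gamma\mathbb{T})\rtimes_{\hat\gamma}\mathbb{Z}$, and the equivalence with (4) is obtained by transporting Brown's $K$\=/theoretic condition through the isomorphism of Theorem A, with faithfulness of $[H]$ on the positive cone (the paper's Proposition \ref{prop:FaithfulKTheory}) supplying exactly the step you flag as the main obstacle. Your ``minimal $m$'' argument is just a repackaging of the paper's iterated use of that proposition, so there is nothing substantive to add.
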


We will prove the theorem by using Theorems A and B to write $\mathcal{O}_A(H)$ as a full corner of a crossed product $B \rtimes \mathbb{Z}$ where $B$ is AF.  Theorem C then follows by applying Brown's Embedding Theorem to $B \rtimes \mathbb{Z}$.  In particular, Theorem B implies $B := \mathcal{O}_A(H) \rtimes_\gamma \mathbb{T}$ is AF.  By Takai duality, it follows that $\mathcal{O}_A(H)$ is a full corner of $B \rtimes_{\hat{\gamma}} \mathbb{Z}$.  Brown's Embedding Theorem also characterizes the AF\=/emeddability of crossed products in terms of $K$\=/theory.  If $H$ is row--finite and faithful, then our $K$\=/theoretic calculation in Theorem A allows to rephrase condition (4) in terms of the $K$\=/theory of the dynamical system $(B, \hat{\gamma})$.

The paper is organized as follows.  In Section \ref{sec:CuntzPimsnerAlgebras} we recall some of the basic material on Cuntz--Pimsner algebras that will be needed in this paper and set up some notation that will be used throughout.  Section \ref{sec:CrossedProductCorrespondences} outlines the work of Hao and Ng in \cite{HaoNg} on crossed products $C^*$\=/correspondences by locally compact groups.  Section \ref{sec:KTheory} outlines Exel's picture of $K$\=/theory in terms of Fredholm operators (see \cite{ExelFredholmOperators}) and using Exel's techniques, we show that the order structure on $K_0$ can also be defined in terms of Fredholm operators and every row\=/finite $C^*$\=/correspondence induces a homomorphism on $K$\=/theory that preserves the order structure on $K_0$.

The main results start in Section \ref{sec:SkewProduct} where we use the Hao--Ng isomorphism to give an inductive limit decomposition of the crossed product $\mathcal{O}_A(H) \rtimes_\gamma \mathbb{T}$ and use this to prove Theorem A.  Section \ref{sec:AFAlgebras} is devoted to proving Theorem B.  Finally Section \ref{sec:BrownsTheorem} contains some applications to AF-Embeddability and in particular, we prove Theorem C in this section.

\begin{notation}
AF\=/algebras are always assumed to be separable.  All other $C^*$-algebras are not assumed to be separable.  Throughout, we use the notation $AB$ to denote the closed span of set $\{ a b : a \in A, b \in B \}$.  Notation such as $\langle H, H \rangle$, $A p A$, etc.\ is used similarly.
\end{notation}

\section{Cuntz-Pimsner Algebras}\label{sec:CuntzPimsnerAlgebras}

In this section we recall some preliminary results and set up some notation.  Most of the results in this section are taken from \cite{KatsuraCorr1} and \cite{KatsuraCorr2}.

Suppose $A$ is a $C^*$\=/algebra.  A \emph{(right) pre--Hilbert $A$\=/module} is a right $A$\=/module $H$ together with an \emph{$A$\=/valued inner product} $\langle \cdot, \cdot \rangle: H \times H \rightarrow A$ such that
\begin{enumerate}
  \item $\langle \xi, \xi \rangle \geq 0$ for every non--zero $\xi \in H$ with equality if and only if $\xi = 0$,
  \item $\langle \xi, \cdot \rangle : H \rightarrow A$ is $A$\=/linear for every $\xi \in H$
  \item $\langle \xi, \eta \rangle^* = \langle \eta, \xi \rangle$ for every $\xi, \eta \in H$.
\end{enumerate}
Define a norm on $H$ by $\|\xi\| = \|\langle \xi, \xi \rangle\|^{1/2}$ for every $\xi \in H$.  We say $H$ is a \emph{Hilbert $A$\=/module} if $H$ is complete with respect to this norm.

Unlike with Hilbert spaces, a bounded $A$\=/linear operator between Hilbert modules, need not have an adjoint.  Thus we will have to assume the existence of adjoints.  If $H$ and $K$ are Hilbert $A$\=/modules, an operator $T: H \rightarrow K$ is called \emph{adjointable} if there is an operator $T^*: K \rightarrow H$ such that
\[ \langle T\xi, \eta \rangle = \langle \xi, T^* \eta \rangle \qquad \text{ for every $\xi, \eta \in H$.} \]
In this case $T^*$ is unique, $T^*$ is adjointable with $T^{**} = T$, and $T$ and $T^*$ are both bounded and $A$\=/linear.  Moreover, the collection $\mathbb{B}(H, K)$ of all adjointable operators form $H$ to $K$ is a Banach space and $\mathbb{B}(H) := \mathbb{B}(H, H)$ is a $C^*$\=/algebra with the obvious operations.

Given $\xi, \eta \in H$, we define $\theta_{\xi, \eta}: H \rightarrow H$ by $\theta_{\xi, \eta}(\zeta) = \xi \langle \eta, \zeta \rangle$ for every $\zeta \in H$.  Then $\theta_{\xi, \eta} \in \mathbb{B}(H)$ and the following hold:
\begin{enumerate}
  \item $\theta_{\xi, \eta}^* = \theta_{\eta, \xi}$,
  \item If $T \in \mathbb{B}(H)$, then $T\theta_{\xi, \eta} = \theta_{T\xi, \eta}$ and $\theta_{\xi, \eta} T = \theta_{\xi, T^* \eta}$.
  \item $\theta_{\xi + \xi', \eta} = \theta_{\xi, \eta} + \theta_{\xi', \eta}$, and
  \item $\theta_{\xi, \eta + \eta'} = \theta_{\xi, \eta} + \theta_{\xi, \eta'}$.
\end{enumerate}
By the above calculations,
\[ \mathbb{K}(H) = \overline{\operatorname{span}} \, \{ \theta_{\xi, \eta} : \xi, \eta \in H \} \subseteq \mathbb{B}(H). \]
is an ideal of $\mathbb{B}(H)$.  The elements of $\mathbb{K}(H)$ are called the \emph{compact operators} on $H$.

Suppose $A$ and $B$ are $C^*$\=/algebras.  An \emph{$A$--$B$ $C^*$\=/correspondence} is a Hilbert $B$\=/module $H$ together with a *\=/homomorphism $\lambda: A \rightarrow \mathbb{B}(H)$.  Then $H$ is an \mbox{$A$--$B$ bimodule} with $a \xi := \lambda(a)(\xi)$ for every $a \in A$ and $\xi \in H$.  The case when $A = B$ is of special interest.  In this case, we say $H$ is a \emph{$C^*$\=/correspondence over $A$}.  We will often suppress the map $\lambda$ and write $\lambda(a)(\xi) = a \xi$ for $a \in A$ and $\xi \in H$.

An $A$--$B$ $C^*$\=/correspondence $H$ is called \emph{faithful} if $H$ is faithful as a left module; that is, the left action $\lambda: A \rightarrow \mathbb{B}(H)$ is injective.  We say $H$ is \emph{row--finite} if $\lambda(A) \subseteq \mathbb{K}(H)$.  The conditions row--finite and faithful $C^*$\=/correspondence analogues of graphs that are row--finite with no sources.

If $H$ and $K$ are $A$--$B$ $C^*$\=/correspondences, then $H \oplus K$ is an $A$--$B$ $C^*$\=/correspondence in an obvious way.  If $H_\alpha$ is a collection of $A$--$B$ $C^*$\=/correspondences, we define
\[ \bigoplus_\alpha H_\alpha = \{ (\xi_\alpha)_\alpha : \xi_\alpha \in H_\alpha, \, \sum_\alpha \langle \xi_\alpha, \xi_\alpha \rangle \text{ converges in norm in $A$ } \}. \]
Then $\bigoplus_\alpha H_\alpha$ is an $A$--$B$ $C^*$\=/correspondence with the obvious bimodule structure and the inner product given by
\[ \langle (\xi_\alpha)_\alpha, (\eta_\alpha)_\alpha \rangle = \sum_\alpha \langle \xi_\alpha, \eta_\alpha \rangle, \]
where the sum converges in norm.

There is also a tensor product of $C^*$\=/correspondences which we now describe.  Suppose $H$ is an $A$--$B$ $C^*$\=/correspondence and $K$ is a $B$--$C$ $C^*$\=/correspondence.  Let $H \odot_B K$ denote the algebraic tensor product of $H$ and $K$ over $B$.  Define a $C$\=/valued inner product on $H \odot_B K$ by
\[ \langle \xi \otimes \eta, \xi' \otimes \eta' \rangle = \langle \eta, \langle \xi, \xi' \rangle \eta' \rangle \]
for $\xi, \xi' \in H$ and $\eta, \eta' \in K$, and let $H \otimes_B K$ denote the completion of $H \odot_B K$.  Then $H \otimes_B K$ is an $A$--$C$ $C^*$\=/correspondence.  If $H$ is a $C^*$\=/correspondence over $A$, define $H^{\otimes{0}} = A$ and $H^{\otimes (k + 1)} = H^{\otimes k} \otimes_A H$ for every $k \geq 0$.  Then define the \emph{Fock space} of $H$ to be the $C^*$\=/correspondence
\[ \mathcal{F}_A(H) = A \oplus H \oplus H^{\otimes 2} \oplus \cdots \]
over $A$.

\begin{example}\label{eg:GeneralizedMorphism}
Suppose $A$ and $B$ are $C^*$\=/algebras and $\alpha: A \rightarrow M(B)$ is a *\=/homomorphism.  Let $H_\alpha = B$ and note that $\mathbb{B}(H_\alpha) = M(B)$.  Hence we may view $H$ is an $A$--$B$ $C^*$\=/correspondence.  In this way, we may think of $C^*$\=/correspondences as being a generalizations of *\=/homomorphisms between $C^*$\=/algebras.  The tensor product of $C^*$\=/correspondences defined above generalizes the composition of *\=/homomorphisms.
\end{example}

We will now associate a $C^*$\=/algebra to a $C^*$\=/correspondence.  For a $C^*$\=/correspondence $H$ over a $C^*$\=/algebra $A$, we will define an algebra $\mathcal{O}_A(H)$ which can be thought of as a crossed product by the ``generalized *\=/homomorphism'' $H$ as in example \ref{eg:GeneralizedMorphism}.

A \emph{Toeplitz representation} $(\pi, \tau): (A, H) \rightarrow B$ consists of a $C^*$\=/algebra $B$, a *\=/homomorphism $\pi: A \rightarrow B$, and a linear map $\tau: H \rightarrow B$ such that
\[ \pi(a) \tau(\xi) = \tau(a \xi) \qquad \text{ and } \qquad \tau(\xi)^* \tau(\eta) = \pi(\langle \xi, \eta \rangle) \]
for every $a \in A$ and $\xi \in H$.  We note if $(\pi, \tau)$ is a Toeplitz representation, then $\tau(\xi) \pi(a) = \tau(\xi a)$.  Moreover, $\tau$ is automatically contractive and $\tau$ is isometric whenever $\pi$ is injective.

There is always a Toeplitz representation $(\pi, \tau): (A, H) \rightarrow \mathcal{T}(H)$ which is universal in the sense that given any other Toeplitz representation $(\pi', \tau') : (A, H) \rightarrow B$, there is a unique *\=/homomorphism $\psi : \mathcal{T}(H) \rightarrow B$ such that $\psi \circ \pi = \pi'$ and $\psi \circ \tau = \tau'$.
\[ \begin{tikzcd} A \arrow{r}{\pi} \arrow[bend right]{ddr}[swap]{\pi'} & \mathcal{T}(H) \arrow[dotted]{dd}{\exists\hskip .5pt ! \,\psi} & H \arrow{l}[swap]{\tau} \arrow[bend left]{ddl}{\tau'} \\ & & \\ & B & \end{tikzcd} \]
The $C^*$\=/algebra $\mathcal{T}(H)$ is called the \emph{Toeplitz--Pimsner algebra}.  By the usual argument, $\mathcal{T}(H)$ is unique up to a canonical isomorphism.  Moreover, $\mathcal{T}(H)$ is generated as a $C^*$\=/algebra by $\pi(A)$ and $\tau(H)$.

To define $\mathcal{O}_A(H)$, we need to impose an extra condition on the Toeplitz representations $(\pi, \tau)$.  The following proposition is Lemma 2.2 in \cite{KPW}.

\begin{proposition}\label{prop:InducedMapOnCompacts}
Given a Toeplitz representation $(\pi, \tau) : (A, H) \rightarrow B$, there is a unique *\=/homomorphism $\varphi : \mathbb{K}(H) \rightarrow B$ such that $\varphi(\theta_{\xi, \eta}) = \tau(\xi) \tau(\eta)^*$ for every $\xi, \eta \in H$.  If $\pi$ is injective, then so is $\varphi$.  Moreover, for every $a \in A$, $\xi \in H$, and $k \in \mathbb{K}(H)$,
\[ \varphi(k)\tau(\xi) = \tau(k(\xi)), \quad \varphi(k) \pi(a) = \varphi(k \lambda(a)), \quad \pi(a) \varphi(k) = \varphi(\lambda(a) k). \]
\end{proposition}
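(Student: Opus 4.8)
The plan is to realize $\varphi$ as the compression of a genuine $*$\=/homomorphism by an isometry, which disposes of well\=/definedness and boundedness in one stroke. First I would fix a faithful representation $B \subseteq \mathbb{B}(\mathcal{H})$ on a Hilbert space and form the interior tensor product $H \otimes_A \mathcal{H}$, where $\mathcal{H}$ is viewed as a left $A$\=/module via $\pi$ (so that $H \otimes_A \mathcal{H}$ is itself a Hilbert space). Define $W : H \otimes_A \mathcal{H} \rightarrow \mathcal{H}$ on elementary tensors by $W(\xi \otimes h) = \tau(\xi)h$. The Toeplitz identity $\tau(\xi)^* \tau(\xi') = \pi(\langle \xi, \xi' \rangle)$ gives
\[ \langle W(\xi \otimes h), W(\xi' \otimes h') \rangle = \langle h, \pi(\langle \xi, \xi' \rangle) h' \rangle = \langle \xi \otimes h, \xi' \otimes h' \rangle, \]
so $W$ is inner\=/product preserving on the dense algebraic part and extends to an isometry with $W^* W = 1$. (One could equally run this argument with $B$ itself as the target module in place of $\mathcal{H}$, but a Hilbert space makes adjointability of $W$ automatic.)

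Next I would set $\varphi(T) = W (T \otimes 1_{\mathcal{H}}) W^*$. Since $T \mapsto T \otimes 1_{\mathcal{H}}$ is the standard $*$\=/homomorphism $\mathbb{B}(H) \rightarrow \mathbb{B}(H \otimes_A \mathcal{H})$ and $W^* W = 1$, the computation $\varphi(S)\varphi(T) = W(S \otimes 1)W^* W(T \otimes 1)W^* = W(ST \otimes 1)W^* = \varphi(ST)$, together with the obvious linearity and $*$\=/compatibility, shows $\varphi$ is a $*$\=/homomorphism. To identify it on rank\=/one operators, observe that $W L_\xi = \tau(\xi)$, where $L_\xi h = \xi \otimes h$, and that a direct computation yields $L_\xi L_\eta^* = \theta_{\xi, \eta} \otimes 1$; hence
\[ \varphi(\theta_{\xi, \eta}) = W(\theta_{\xi, \eta} \otimes 1)W^* = (W L_\xi)(W L_\eta)^* = \tau(\xi)\tau(\eta)^*. \]
In particular $\varphi$ carries the algebraic span of the $\theta_{\xi, \eta}$ into $B$; being contractive (as a $*$\=/homomorphism) and $\mathbb{K}(H)$ being the closure of that span, $\varphi$ maps $\mathbb{K}(H)$ into $B$, and I restrict it there. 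Uniqueness is immediate, since the rank\=/one operators span a dense subspace and $\varphi$ is continuous.

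The three module identities I would verify on rank\=/one operators and then extend by linearity and continuity. For instance $\varphi(\theta_{\zeta, \omega})\tau(\xi) = \tau(\zeta)\tau(\omega)^*\tau(\xi) = \tau(\zeta)\pi(\langle \omega, \xi \rangle) = \tau(\zeta \langle \omega, \xi \rangle) = \tau(\theta_{\zeta, \omega}\xi)$; similarly $\varphi(\theta_{\zeta, \omega})\pi(a) = \tau(\zeta)\tau(\lambda(a^*)\omega)^* = \varphi(\theta_{\zeta, \omega}\lambda(a))$ and $\pi(a)\varphi(\theta_{\zeta, \omega}) = \tau(\lambda(a)\zeta)\tau(\omega)^* = \varphi(\lambda(a)\theta_{\zeta, \omega})$, using the recorded identities $\theta_{\zeta, \omega}\lambda(a) = \theta_{\zeta, \lambda(a^*)\omega}$ and $\lambda(a)\theta_{\zeta, \omega} = \theta_{\lambda(a)\zeta, \omega}$. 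Finally, for injectivity when $\pi$ is injective, I would invoke the first identity: if $\varphi(k) = 0$ then $\tau(k\xi) = 0$ for every $\xi \in H$, and since injectivity of $\pi$ forces $\tau$ to be isometric, this gives $k\xi = 0$ for all $\xi$, whence $k = 0$.

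The main obstacle is exactly the well\=/definedness and boundedness of $\varphi$ on the algebraic span of the $\theta_{\xi, \eta}$: a compact operator admits many representations as a finite sum of rank\=/one operators, and the estimate one gets directly from the triangle inequality controls $\|\sum_i \tau(\xi_i)\tau(\eta_i)^*\|$ only by the Hilbert--Schmidt-type quantity $\big(\sum_i \|\xi_i\|^2\big)^{1/2}\big(\sum_i \|\eta_i\|^2\big)^{1/2}$, which bears no relation to the operator norm $\|\sum_i \theta_{\xi_i, \eta_i}\|$. Writing $\varphi$ as $W(\,\cdot \otimes 1)W^*$ is precisely what circumvents this difficulty: both well\=/definedness and the contractive bound become automatic consequences of the facts that $T \mapsto T \otimes 1$ is a $*$\=/homomorphism and that $W$ is an isometry.
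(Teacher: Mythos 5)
Your proof is correct. The paper does not actually prove this proposition --- it is quoted from \cite{KPW} --- so there is no in\-/paper argument to compare against; your route via the induced representation, i.e.\ realizing $\varphi$ as $T \mapsto W(T \otimes 1_{\mathcal{H}})W^*$ for the isometry $W(\xi \otimes h) = \tau(\xi)h$ on $H \otimes_A \mathcal{H}$, is the standard one (it goes back to Pimsner and Fowler--Raeburn) and correctly isolates and disposes of the only genuine issue, namely the well\-/definedness and norm bound of $\xi,\eta \mapsto \tau(\xi)\tau(\eta)^*$ on the span of the rank\-/one operators. The verifications of the covariance identities on $\theta_{\zeta,\omega}$ and of injectivity via $\varphi(k)\tau(\xi) = \tau(k\xi)$ together with the isometry of $\tau$ when $\pi$ is injective are all as they should be.
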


Define an ideal $J_H \subseteq A$ by $J_H = \lambda^{-1}(\mathbb{K}(H)) \cap (\ker \lambda)^\perp$, where
\[(\ker \lambda)^\perp = \{ a \in A : ab = 0 \text{ for all } b \in \ker \lambda \}. \]
We say a Toeplitz representation $(\pi, \tau) : (A, H) \rightarrow B$ is \emph{covariant} if $\pi(a) = \varphi(\lambda(a))$ for every $a \in J_H$.  As with the Toeplitz\=/Pimsner algebra, there is always a universal covariant Toeplitz representation $(\pi, \tau) : (A, H) \rightarrow \mathcal{O}_A(H)$.  The algebra $\mathcal{O}_A(H)$ is called the \emph{Cuntz--Pimsner} algebra.  As before, $\mathcal{O}_A(H)$ is unique up to a canonical isomorphism and is generated as a $C^*$\=/algebra by $\pi(A)$ and $\tau(H)$.  Moreover,
\[ \mathcal{O}_A(H) = \overline{\operatorname{span}} \, \{\tau(\xi_1) \cdots \tau(\xi_n) \pi(a) \tau(\eta_m)^* \cdots \tau(\eta_1)^* : \xi_i, \eta_i \in H, a \in A\}. \]

\begin{remark}
A more explicit description of $\mathcal{T}_A(H)$ and $\mathcal{O}_A(H)$ can be given as follows.  Given $a \in A$ and $\xi \in H$, define $\pi(a), \tau(a) \in \mathbb{B}(\mathcal{F}_A(H))$ by
\begin{align*}
   \pi(a)(\eta_1 \otimes \cdots \otimes \eta_n) &= a \eta_1 \otimes \cdots \otimes \eta_n \\
\intertext{and}
   \tau(\xi)(\eta_1 \otimes \cdots \otimes \eta_n) &= \xi \otimes \eta_1 \otimes \cdots \otimes \eta_n.
\end{align*}
The $\mathcal{T}_A(H)$ is isomorphic to the $C^*$\=/subalgebra of $\mathbb{B}(\mathcal{F}_A(H))$ generated by the $\pi(a)$ and $\tau(\xi)$ for $a \in A$ and $\xi \in H$.  The algebra $\mathcal{O}_A(H)$ is roughly the quotient of $\mathcal{T}_A(H)$ by the compact operators on $\mathcal{F}_A(H)$ (see Section 4 of \cite{KatsuraCorr2}).
\end{remark}

\begin{example}\label{eg:GeneralizedCrossedProduct}
Suppose $A$ is a $C^*$\=/algebra and $\alpha \in \operatorname{Aut}(A)$.  Define a $C^*$\=/correspondence $H$ over $A$ as in Example \ref{eg:GeneralizedMorphism}.  Then $\mathcal{O}_A(H) \cong A \rtimes_\alpha \mathbb{Z}$.  In general, we may think of $\mathcal{O}_A(H)$ as a crossed product of $A$ by the generalized morphism $H$.
\end{example}

Suppose $(\pi, \tau): (A, H) \rightarrow B$ is a Toeplitz representation on a $C^*$\=/algebra $B$.  We say $(\pi, \tau)$ \emph{admits of gauge action} if there is a point\=/norm continuous group homomorphism $\gamma: \mathbb{T} \rightarrow \operatorname{Aut}(B)$ such that $\gamma_z(\pi(a)) = \pi(a)$ and $\gamma_z(\tau(\xi)) = z \tau(\xi)$ for every $a \in A$, $\xi \in H$, and $z \in \mathbb{T}$.  It is an easy consequence of the universal property (definition) of $\mathcal{O}_A(H)$ that the universal covariant representation admits a gauge action.  In some sense, the universal covariant representation is the only injective covariant representation which admits a gauge action.

\begin{theorem}[Gauge Invariant Uniqueness Theorem]\label{thm:GIUT}
Let $(\pi, \tau): (A, H) \rightarrow B$ be a covariant Toeplitz representation and let $\psi: \mathcal{O}_A(H) \rightarrow B$ denote the induced map.  If $(\pi, \tau)$ admits a gauge action and $\pi$ is injective, then $\psi$ is injective.
\end{theorem}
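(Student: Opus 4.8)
The plan is to exploit the gauge actions on both algebras to reduce the injectivity of $\psi$ to injectivity on the fixed\=/point algebra, where the hypotheses on $\pi$ and covariance can be applied directly. Write $(\pi,\tau)$ for the representation into $B$ as in the statement, and let $(\pi_u,\tau_u)\colon (A,H)\to\mathcal{O}_A(H)$ denote the universal covariant representation, so that $\psi\circ\pi_u=\pi$ and $\psi\circ\tau_u=\tau$. Let $\gamma\colon\mathbb{T}\to\operatorname{Aut}(\mathcal{O}_A(H))$ be the universal gauge action and $\beta\colon\mathbb{T}\to\operatorname{Aut}(B)$ the gauge action carried by $(\pi,\tau)$. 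Since $\psi$ sends the generators $\pi_u(a)$ and $\tau_u(\xi)$ to elements scaled the same way by $\beta$ as $\gamma$ scales the generators, one checks on generators that $\psi\circ\gamma_z=\beta_z\circ\psi$ for every $z\in\mathbb{T}$. Averaging over $\mathbb{T}$ produces a conditional expectation
\[ \Phi(x) = \int_{\mathbb{T}} \gamma_z(x)\, dz \]
onto the fixed\=/point algebra $\mathcal{O}_A(H)^\gamma$, and similarly an expectation $\Psi$ onto $C^\beta$, where $C=\psi(\mathcal{O}_A(H))$; the intertwining relation then gives $\psi\circ\Phi=\Psi\circ\psi$.

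The first key point is that $\Phi$ is faithful: if $a^*a\geq 0$ satisfies $\Phi(a^*a)=0$, then $\int_{\mathbb{T}} f(\gamma_z(a)^*\gamma_z(a))\,dz=0$ for every state $f$, and since the integrand is continuous and non\=/negative it vanishes at $z=1$, forcing $a=0$. Using this, the proof reduces to showing $\psi$ is injective on $\mathcal{O}_A(H)^\gamma$: if $\psi(x)=0$, then replacing $x$ by $x^*x$ we may assume $x\geq 0$, whence $\psi(\Phi(x))=\Psi(\psi(x))=0$; injectivity on the fixed\=/point algebra would give $\Phi(x)=0$, and faithfulness of $\Phi$ would give $x=0$.

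It remains to analyze $\mathcal{O}_A(H)^\gamma$. For $n\geq 0$ let $\tau_u^{(n)}\colon H^{\otimes n}\to\mathcal{O}_A(H)$ be determined by $\tau_u^{(n)}(\xi_1\otimes\cdots\otimes\xi_n)=\tau_u(\xi_1)\cdots\tau_u(\xi_n)$, with $\tau_u^{(0)}=\pi_u$, and let $\varphi_u^{(n)}\colon\mathbb{K}(H^{\otimes n})\to\mathcal{O}_A(H)$ be the $*$\=/homomorphism of Proposition \ref{prop:InducedMapOnCompacts} applied to $(\pi_u,\tau_u^{(n)})$, so that $\varphi_u^{(n)}(\theta_{\zeta,\eta})=\tau_u^{(n)}(\zeta)\,\tau_u^{(n)}(\eta)^*$. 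Since $\gamma_z$ scales $\tau_u^{(i)}(\zeta)\tau_u^{(j)}(\eta)^*$ by $z^{i-j}$, the fixed\=/point algebra is the closed span of the degree\=/zero terms, and in the spirit of Katsura's analysis of the core one shows it is $\overline{\bigcup_n\mathcal{B}_n}$ for an increasing chain of $C^*$\=/subalgebras $\mathcal{B}_n$ assembled from $\varphi_u^{(0)},\ldots,\varphi_u^{(n)}$. Because $\pi$ is injective, Proposition \ref{prop:InducedMapOnCompacts} shows that the map induced by $(\pi,\tau^{(n)})$ on $\mathbb{K}(H^{\otimes n})$ is injective; by uniqueness this map equals $\psi\circ\varphi_u^{(n)}$, so $\psi$ is injective on each image $\varphi_u^{(n)}(\mathbb{K}(H^{\otimes n}))$. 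By continuity it then suffices to prove $\psi|_{\mathcal{B}_n}$ is injective for each $n$, which I would carry out by induction using the extensions relating $\mathcal{B}_n$, $\mathcal{B}_{n-1}$, and $\varphi_u^{(n)}(\mathbb{K}(H^{\otimes n}))$.

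I expect the inductive step to be the main obstacle. The difficulty is that the covariance relation $\pi_u(a)=\varphi_u^{(1)}(\lambda(a))$ for $a\in J_H$ mixes the degree\=/zero part $\pi_u(A)$ with the degree\=/one compacts, so the subalgebras $\mathcal{B}_n$ do not simply split as direct sums, and one must verify that $\psi$ does not collapse the identifications forced by covariance. Controlling this requires the precise definition $J_H=\lambda^{-1}(\mathbb{K}(H))\cap(\ker\lambda)^\perp$: the orthogonal\=/complement condition is exactly what guarantees, together with injectivity of $\pi$, that the relevant subquotients at each stage are faithfully represented in $B$. Making the bookkeeping of these extensions precise — rather than the conditional\=/expectation reduction, which is routine — is where the real work of the argument lies.
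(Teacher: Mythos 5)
The paper does not actually prove this theorem: it is recalled as background from Katsura's work (\cite{KatsuraCorr2}), so there is no in-paper argument to compare yours against. Your outline is the standard Katsura strategy, and the reduction you carry out in full --- intertwining the two gauge actions on generators, averaging to get the faithful conditional expectation $\Phi$ onto $\mathcal{O}_A(H)^\gamma$, and using $\psi\circ\Phi=\Psi\circ\psi$ on positive elements to reduce injectivity of $\psi$ to injectivity on the core --- is correct as written. The identification of the core as $\overline{\bigcup_n \mathcal{B}_n}$ with $\mathcal{B}_n=\varphi_u^{(0)}(A)+\cdots+\varphi_u^{(n)}(\mathbb{K}(H^{\otimes n}))$ also matches the decomposition the paper itself uses later (the algebras $A_{[0,n]}$ in Section 6).

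That said, there is a genuine gap: the assertion that $\psi|_{\mathcal{B}_n}$ is injective ``by induction using the extensions'' is not an argument, and it is precisely where the whole theorem lives --- everything else is, as you say, routine. What the induction actually requires is a lemma of the following form: for \emph{any} covariant representation with $\pi$ injective, if $x_k\in\mathbb{K}(H^{\otimes k})$ for $k=0,\dots,n$ and $\sum_k\varphi^{(k)}(x_k)=0$ in $B$, then the same relation already holds in $\mathcal{O}_A(H)$; equivalently, the ideal $\{a\in A:\pi(a)\in\varphi^{(1)}(\mathbb{K}(H))\text{ with }\pi(a)=\varphi^{(1)}(\lambda(a))\}$ is exactly $J_H$, independently of the representation. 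The base case of that computation goes: if $\pi(a)+\varphi^{(1)}(k)=0$, then applying both terms to $\tau(\xi)$ via the relations in Proposition \ref{prop:InducedMapOnCompacts} gives $\tau((\lambda(a)+k)\xi)=0$, hence $k=-\lambda(a)$ and $a\in\lambda^{-1}(\mathbb{K}(H))$; and for $b\in\ker\lambda$ one gets $\pi(ab)=\varphi^{(1)}(\lambda(a)\lambda(b))=0$, so injectivity of $\pi$ forces $ab=0$ and $a\in(\ker\lambda)^\perp$, i.e.\ $a\in J_H$ --- so covariance of the universal representation kills the same element. Propagating this through the higher-degree pieces (where $\mathcal{B}_{n-1}\cap\varphi_u^{(n)}(\mathbb{K}(H^{\otimes n}))$ must be identified) is the content of Katsura's Propositions 5.11--5.14, and without it your induction does not close. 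You have correctly located the difficulty, but locating it is not the same as resolving it, so as it stands the proposal is a plan for a proof rather than a proof.
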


We end this section with a technical proposition about non\=/degenerate correspondences.  A $C^*$\=/correspondence $(A, H)$ is said to be \emph{non\=/degenerate} if $AH = H$. Occasionally, some of our arguments will require non\=/degeneracy, but fortunately, up to Morita equivalence, we can always reduce to this case.

\begin{proposition}\label{prop:NonDegenerate}
Suppose $A$ is a $C^*$\=/algebra and $H$ is a $C^*$\=/correspondence over $A$.  Then there is a non\=/degenerate $C^*$\=/correspondence $H'$ over $A$ such that $\mathcal{O}_A(H)$ is Morita equivalent to $\mathcal{O}_A(H')$.
\end{proposition}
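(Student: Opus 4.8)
The plan is to take $H'$ to be the essential submodule $\overline{AH}$ and to realise $\mathcal{O}_A(H')$ as a full hereditary subalgebra of $\mathcal{O}_A(H)$, so that the Morita equivalence comes from the standard fact that a full hereditary subalgebra is Morita equivalent to the ambient algebra.

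\emph{Construction of $H'$.} Write $\lambda : A \to \mathbb{B}(H)$ for the left action and fix an approximate unit $(e_i)$ for $A$. The net $\lambda(e_i)$ is increasing and norm-bounded in $\mathbb{B}(H) = M(\mathbb{K}(H))$, so it converges strictly to a projection $p \in \mathbb{B}(H)$ with $pH = \overline{\lambda(A)H} = \overline{AH}$. I would set $H' := pH$. A short computation shows $H'$ is a right $A$-submodule, invariant under the left action, with $\overline{AH'} = \overline{A\,\overline{AH}} = \overline{AH} = H'$, so $H'$ is a non-degenerate correspondence over $A$; moreover $\lambda(a)p = \lambda(a)$ for all $a$, so the complement $(1-p)H$ carries the zero left action. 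I would also record $\mathbb{K}(H') = p\,\mathbb{K}(H)\,p$ and that the left action $\lambda'$ of $A$ on $H'$ is just $\lambda$ viewed on the corner; consequently $\ker \lambda' = \ker \lambda$ and $(\lambda')^{-1}(\mathbb{K}(H')) = \lambda^{-1}(\mathbb{K}(H))$, whence the Katsura ideals coincide, $J_{H'} = J_H$.

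\emph{Embedding $\mathcal{O}_A(H')$.} Let $(\pi, \tau)$ be the universal covariant representation of $(A, H)$, so that $\pi$ is injective. Restricting $\tau$ to $H'$ gives a Toeplitz representation $(\pi, \tau|_{H'})$ of $(A, H')$; since $J_{H'} = J_H$ and the map $\varphi$ of Proposition \ref{prop:InducedMapOnCompacts} restricts to the corresponding map for $H'$ on $\mathbb{K}(H') = p\,\mathbb{K}(H)\,p$, this representation is covariant. It admits the restriction of the gauge action and has $\pi$ injective, so by the Gauge Invariant Uniqueness Theorem (Theorem \ref{thm:GIUT}) the induced map $\psi : \mathcal{O}_A(H') \to \mathcal{O}_A(H)$ is injective, with image the $C^*$-subalgebra $C^*(\pi(A), \tau(H'))$.

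\emph{Identifying the image with a full corner, and the main obstacle.} The crux is the vanishing relation: if $\zeta \in (1-p)H$, then $\tau(\xi)\tau(\zeta) = 0$ for every $\xi \in H$, because $\|\tau(\xi)\tau(\zeta)\|^2 = \|\tau(\zeta)^*\pi(\langle \xi, \xi\rangle)\tau(\zeta)\|$ and $\pi(\langle \xi, \xi\rangle)\tau(\zeta) = \tau(\lambda(\langle \xi,\xi\rangle)\zeta) = 0$. Hence in a spanning monomial $\tau(\xi_1)\cdots\tau(\xi_n)\pi(a)\tau(\eta_m)^*\cdots\tau(\eta_1)^*$ every inner leg $\xi_2,\dots,\xi_n,\eta_2,\dots,\eta_m$ may be replaced by its image under $p$, i.e.\ assumed to lie in $H'$. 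Using this I would prove both inclusions giving $\psi(\mathcal{O}_A(H')) = \overline{\pi(A)\,\mathcal{O}_A(H)\,\pi(A)}$: multiplying such a reduced monomial by $\pi(a)$ and $\pi(b)$ forces the two outer legs into $\overline{AH} = H'$ as well (so the left side is contained in the right), while for $\xi \in H' = \overline{AH}$ one writes $\tau(\xi)$ as a limit of sums $\pi(a)\tau(\zeta)$ (giving the reverse containment). Finally $\pi(A)$ is full in $\mathcal{O}_A(H)$, since the ideal it generates contains each $\tau(\xi) = \lim_i \tau(\xi)\pi(e_i)$ and hence every generator; thus $\overline{\pi(A)\,\mathcal{O}_A(H)\,\pi(A)}$ is a full hereditary subalgebra and is Morita equivalent to $\mathcal{O}_A(H)$. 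Combining, $\mathcal{O}_A(H') \cong \overline{\pi(A)\,\mathcal{O}_A(H)\,\pi(A)}$ is Morita equivalent to $\mathcal{O}_A(H)$, as required. I expect the main obstacle to be exactly this corner identification: it is driven entirely by the vanishing relation, and some care is needed because $\mathcal{O}_A(H')$ is typically non-unital, so $\pi(A)$ cuts out a hereditary subalgebra rather than a corner by an honest projection of $\mathcal{O}_A(H)$.
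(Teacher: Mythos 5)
Your overall strategy is the same as the paper's: take $H'=\overline{AH}$, restrict the universal covariant representation of $(A,H)$, invoke the Gauge Invariant Uniqueness Theorem, and identify the image with the full hereditary subalgebra $\overline{\pi(A)\,\mathcal{O}_A(H)\,\pi(A)}$. However, there is a genuine gap in the device you build everything on: the projection $p\in\mathbb{B}(H)$ with $pH=\overline{AH}$ need not exist. Closed submodules of Hilbert $C^*$\=/modules are not orthogonally complemented in general, and $\lambda(e_i)$ need not converge strictly. Concretely, let $B=C([0,1])$, $H=B$ as a Hilbert $B$\=/module, and let $A=C_0((0,1])$ act by multiplication. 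Then $\overline{AH}=\{f: f(0)=0\}\neq H$, while $\overline{AH}^{\perp}=0$; so no adjointable projection can have range $\overline{AH}$ (such a $p$ would force $(1-p)H\subseteq \overline{AH}^{\perp}=0$, i.e.\ $p=1$), and one checks directly that $\lambda(e_i)k$ fails to be norm\=/Cauchy for $k\in\mathbb{K}(H)=C([0,1])$ with $k(0)\neq 0$. This invalidates, as stated, the identification $\mathbb{K}(H')=p\,\mathbb{K}(H)\,p$, your derivation of $J_{H'}=J_H$, the covariance of the restricted representation via ``$\varphi$ restricts to the corner,'' and the decomposition of spanning monomials by inserting $p$ into the inner legs.

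The good news is that every one of these steps survives if you replace $p$ by the approximate unit and argue by approximation, which is exactly what the paper does. For the inner legs, use $\tau(\xi)\tau(\eta)=\lim_i\tau(\xi e_i)\tau(\eta)=\lim_i\tau(\xi)\tau(e_i\eta)$ with $e_i\eta\in AH$, rather than $\tau(\xi)\tau(p\eta)$. For covariance, the only inclusion you need is $J_{H'}\subseteq J_H$ together with $\varphi'(\lambda'(a))=\varphi(\lambda(a))$ for $a\in J_{H'}$; the nontrivial point is that $\lambda'(a)\in\mathbb{K}(H')$ forces $\lambda(a)\in\mathbb{K}(H)$, which the paper proves by approximating $\lambda'(a)$ by $\sum\theta_{\xi_i,\eta_i}$ with $\xi_i,\eta_i\in H'$ and then estimating $\|\lambda(a)-\sum\theta_{\xi_i,e\eta_i}\|$ on all of $H$ using a suitable $e\in A$ (no complement of $H'$ is ever needed, since $\lambda(ae)\zeta$ and $\theta_{\xi_i,e\eta_i}\zeta$ depend only on $e\zeta\in H'$). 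So rewrite the argument with these approximation steps in place of $p$ and the proof goes through.
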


\begin{proof}
Note that $H' := AH$ is a non\=/degenerate $C^*$\=/correspondence over $A$.  Let $\lambda': A \rightarrow \mathbb{B}(H')$ denote the left multiplication map.  Let $(\pi, \tau) : (A, H) \rightarrow \mathcal{O}_A(H)$ be the universal covariant representation.  Let $\pi' = \pi$ and $\tau' = \tau|_{H'}$.  It is easy to verify $(\pi', \tau') : (A', H') \rightarrow \mathcal{O}_A(H')$ is a Toeplitz representation.

We claim $(\pi', \tau')$ is covariant.  Clearly $\ker \lambda \subseteq \ker \lambda'$ and hence $(\ker \lambda')^\perp \subseteq (\ker \lambda)^\perp$.  Suppose $a \in A$ is such that $\lambda'(a) \in \mathbb{K}(H')$ and suppose $\varepsilon > 0$.  Then there are vectors $\xi_1, \ldots, \xi_n, \eta_1, \ldots, \eta_n \in H'$ such that
\[ \left\| \lambda'(a) - \sum \theta_{\xi_i, \eta_i} \right\|_{\mathbb{B}(H')} < \varepsilon. \]
Replacing $\xi_i$ with $\xi_i / \|\xi_i\|$ and replacing $\eta_i$ with $\|\xi_i\| \eta_i$, we may assume $\| \xi_i \| = 1$ for every $i$.  Choose a self\=/adjoint contraction $e \in A$ such that $\| a e - a \| < \varepsilon$ and $\| e \eta_i - \eta_i \| < \varepsilon/n$.  Then in the operator norm on $\mathbb{B}(H)$, we have
\begin{align*}
 & \hphantom{=} \left\| \lambda(a) - \sum \theta_{\xi_i, \eta_i} \right\| \\
 &\leq \|\lambda(a) - \lambda(ae) \| + \left\| \lambda(ae) - \sum \theta_{\xi_i, e \eta_i} \right\| + \sum \| \theta_{\xi_i, e \eta_i}-\theta_{\xi_i, \eta_i}\|.
\end{align*}
The first and third term in the above sum are bounded by $\varepsilon$.  This also holds for the middle term since for every $\zeta \in H$ with $\| \zeta \| \leq 1$, $e \zeta \in H'$ and hence
\[ \left\| (\lambda(ae) - \sum \theta_{\xi_i, e \eta_i})(\zeta) \right\| = \left\| (\lambda'(a) - \sum \theta_{\xi_i, \eta_i})(e \zeta) \right\|<\varepsilon. \]
Therefore, $\lambda(a) \in \mathbb{K}(H)$ and hence $J_{H'} \subseteq J_H$.  The calculation above also shows $\varphi'(\lambda'(a)) = \varphi(\lambda(a))$ for every $a \in J_{H'}$.  Hence the covariance of $(\pi', \tau')$ follows from the covariance of $(\pi, \tau)$.

Let $\psi: \mathcal{O}_A(H') \rightarrow \mathcal{O}_A(H)$ be the *\=/homomorphism induced by $(\pi', \tau')$.  By the Gauge Invariance Uniqueness Theorem, $\psi$ is injective.  Moreover,
\[ \psi(\mathcal{O}_A(H')) = \pi(A) \mathcal{O}_A(H) \pi(A). \]
Since $\mathcal{O}_A(H)$ is generated as an ideal by $\pi(A)$, it follows that $\mathcal{O}_A(H')$ is a full, hereditary subalgebra of $\mathcal{O}_A(H)$.  In particular, $\mathcal{O}_A(H')$ and $\mathcal{O}_A(H)$ are Morita equivalent.
\end{proof}

\begin{remark}
There is also a version of Proposition \ref{prop:NonDegenerate} for the Toeplitz\=/Pimsner algebra $\mathcal{T}_A(H)$ (see the paragraph preceding Lemma C.17 in \cite{KatsuraCorr2}).
\end{remark}

\section{Crossed Product Correspondences}\label{sec:CrossedProductCorrespondences}

In this section, we recall some of the results from \cite{HaoNg}.  In particular, suppose $H$ is a $C^*$\=/correspondence over $A$ and $G$ is a locally compact group acting continuously on the correspondence $(A, H)$ (see below), there is a crossed product correspondence $H \rtimes G$ over the full crossed product $A \rtimes G$.  Moreover, the actions induces a continuous action of $G$ on $\mathcal{O}_A(H)$.  If $G$ is amenable, then Hao and Ng show there is an isomorphism
\[ \mathcal{O}_{A \rtimes G}(H \rtimes G) \cong \mathcal{O}_A(H) \rtimes G. \]

In this section, we will outline the construction of this isomorphism.  In the case where $G$ is abelian, we will show there is a canonical continuous action of $\hat{G}$ on both algebras and the Hao--Ng isomorphism is $\hat{G}$\=/equivariant.  In Section \ref{sec:SkewProduct}, these results will be applied to the gauge action $\gamma$ on a Cuntz--Pimsner algebra $\mathcal{O}_A(H)$ to give an alternate description of the crossed product $\mathcal{O}_A(H) \rtimes_\gamma \mathbb{T}$.

Suppose $A$ is a $C^*$\=/algebra, $H$ is a $C^*$\=/correspondence over $A$, and $G$ is a locally compact group.  A \emph{correspondence action} is a pair
\[ (\alpha, \beta) : G \rightarrow \operatorname{Aut}(A, H) \]
where $\alpha$ is an action of $G$ on $A$ as *\=/homomorphisms and $\beta$ is an action of $G$ on $H$ as isometric $\mathbb{C}$\=/linear isomorphisms such that for every $a \in A$ and $\xi \in H$, the maps $G \rightarrow A$ and $G \rightarrow H$ given by $s \mapsto \alpha_s(a)$ and $s \mapsto \beta_s(\xi)$ are norm continuous and for every $a \in A$, $\xi, \eta \in H$, and $s \in G$, the following hold:
\begin{enumerate}
  \item $\beta_s(\xi) \alpha_s(a) = \beta_s(\xi a)$,
  \item $\alpha_s(a) \beta_s(\xi) = \beta_s(a \xi)$, and
  \item $\alpha_s(\langle \xi, \eta \rangle) = \langle \beta_s(\xi), \beta_s(\eta) \rangle$.
\end{enumerate}

Given $a \in C_c(G, A)$, $\xi \in C_c(G, H)$, and $s \in G$, we define
\begin{align*}
(a \xi)(s) &= \int_G a(t) \beta_t( \xi(t^{-1} s)) \, dt, \\
(\xi a)(s) &= \int_G \xi(t) \alpha_t( a(t^{-1} s)) \, dt, \quad \text{and} \\
\langle \xi, \eta \rangle(s) &= \int_G \alpha_t( \langle \xi(t), \eta(ts) \rangle) \, dt,
\end{align*}
where the integrals are taken with respect to the Haar measure on $G$.  Taking completions, we obtain a $C^*$\=/correspondence $H \rtimes_\beta G$ over the full crossed product $A \rtimes_\alpha G$.

Let $H$ be a $C^*$\=/correspondence over $A$, let $(\pi, \tau) : (A, H) \rightarrow \mathcal{O}_A(H)$ be the universal covariant representation and suppose  $(\alpha, \beta)$ is a correspondence action $G$ of $(A, H)$.  For $s \in G$, define $\pi_s : A \rightarrow \mathcal{O}_A(H)$ and $\tau_s : H \rightarrow \mathcal{O}_A(H)$ by $\pi_s(a) = \pi(\alpha_s(a))$ and $\tau_s(\xi) = \tau(\beta_s(\xi))$.  Then $(\pi_s, \tau_s)$ is a covariant Toeplitz representation and hence induces a *\=/homomorphism $\gamma_s : \mathcal{O}_A(H) \rightarrow \mathcal{O}_A(H)$.  Then the map $\gamma : G \rightarrow \operatorname{Aut}(\mathcal{O}_A(H))$, $s \mapsto \gamma_s$, is a group homomorphism.  Moreover, $\gamma$ is strongly continuous in the sense that the map $G \rightarrow \mathcal{O}_A(H)$ given by $s \mapsto \gamma_s(x)$ is norm continuous for every $x \in \mathcal{O}_A(H)$.

The following theorem is the main result of \cite{HaoNg}.

\begin{theorem}\label{thm:HaoNg}
Suppose $(\alpha, \beta)$ is a correspondence action of a locally compact group $G$ on a $C^*$\=/correspondence $(A, H)$.  Let $(\pi, \tau) : (A, H) \rightarrow \mathcal{O}_A(H)$ be the universal covariant representation.  If $G$ is amenable, then the maps
\[ \pi' : C_c(G, A) \rightarrow C_c(G, \mathcal{O}_A(H)) \quad \text{and} \quad \tau' : C_c(G, H) \rightarrow C_c(G, \mathcal{O}_A(H)) \]
given by $\pi'(a)(s) = \pi(a(s))$ and $\tau'(\xi)(s) = \tau(\xi(s))$ induce an isomorphism
\[ \mathcal{O}_{A \rtimes_\alpha G}(H \rtimes_\beta G) \rightarrow \mathcal{O}_A(H) \rtimes_\gamma G. \]
\end{theorem}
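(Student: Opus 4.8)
The plan is to exhibit $(\pi', \tau')$ as a covariant Toeplitz representation of the crossed product correspondence $(A \rtimes_\alpha G, H \rtimes_\beta G)$ into $\mathcal{O}_A(H) \rtimes_\gamma G$ and then invoke the Gauge Invariant Uniqueness Theorem. Note that $\pi'$ is nothing but the crossed product homomorphism $\pi \rtimes G$ of the $G$-equivariant inclusion $\pi : (A, \alpha) \to (\mathcal{O}_A(H), \gamma)$, and that $\pi'(a)$, $\tau'(\xi)$ land in the dense subalgebras $C_c(G, \mathcal{O}_A(H))$ of the crossed product. Once $(\pi', \tau')$ is known to be a covariant representation, it induces a homomorphism $\Psi : \mathcal{O}_{A \rtimes_\alpha G}(H \rtimes_\beta G) \to \mathcal{O}_A(H) \rtimes_\gamma G$, and the theorem reduces to showing $\Psi$ is surjective (by a density argument) and injective (via Theorem \ref{thm:GIUT}).

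For the Toeplitz relations I would verify $\pi'(a)\tau'(\xi) = \tau'(a\xi)$ and $\tau'(\xi)^*\tau'(\eta) = \pi'(\langle \xi,\eta\rangle)$ for $a \in C_c(G,A)$ and $\xi, \eta \in C_c(G, H)$ by direct computation: substitute the convolution formulas defining the module operations and inner product on $H \rtimes_\beta G$, compare with the convolution product on $\mathcal{O}_A(H) \rtimes_\gamma G$, and use the correspondence-action identities together with the intertwining relations $\gamma_t(\pi(a)) = \pi(\alpha_t(a))$ and $\gamma_t(\tau(\xi)) = \tau(\beta_t(\xi))$ to make the two sides agree under the integral sign. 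These checks are routine but bookkeeping-heavy.

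The genuinely delicate point, and the step I expect to be the main obstacle, is covariance. One must identify (or at least control) the covariance ideal $J_{H \rtimes_\beta G} \subseteq A \rtimes_\alpha G$ and the induced map $\varphi'$ on $\mathbb{K}(H \rtimes_\beta G)$, and then verify $\pi'(a) = \varphi'(\lambda'(a))$ for $a \in J_{H \rtimes_\beta G}$. The difficulty is that $J_{H \rtimes G}$ need not coincide with $J_H \rtimes G$, so relating $\mathbb{K}(H \rtimes_\beta G)$ to $\mathbb{K}(H)$ requires care. The key technical lemma, which I would isolate, is that $\varphi'$ applied to a rank-one operator $\theta_{\xi, \eta}$ with $\xi, \eta \in C_c(G, H)$ equals $\tau'(\xi)\tau'(\eta)^*$; this is again computed through the convolution formulas, now invoking the fibrewise covariance of $(\pi, \tau)$ and Proposition \ref{prop:InducedMapOnCompacts}. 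This is where the bulk of the work resides.

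For surjectivity I would observe that the image of $\Psi$ is the $C^*$-subalgebra of $\mathcal{O}_A(H) \rtimes_\gamma G$ generated by $\pi'(C_c(G, A))$ and $\tau'(C_c(G, H))$; using that $\mathcal{O}_A(H)$ is the closed span of words $\tau(\xi_1)\cdots\tau(\xi_n)\pi(a)\tau(\eta_m)^*\cdots\tau(\eta_1)^*$ and that $C_c(G, \mathcal{O}_A(H))$ is dense in the crossed product, an approximate-identity argument shows every such word, spread over $G$, lies in the closed image, so $\Psi$ is onto. For injectivity I would apply Theorem \ref{thm:GIUT}: the usual gauge $\mathbb{T}$-action on $\mathcal{O}_A(H)$ commutes with $\gamma$, hence descends to an action $\bar{\delta}$ on $\mathcal{O}_A(H) \rtimes_\gamma G$ satisfying $\bar{\delta}_z(\pi'(a)) = \pi'(a)$ and $\bar{\delta}_z(\tau'(\xi)) = z\,\tau'(\xi)$, so $(\pi', \tau')$ admits a gauge action; and since $\pi$ is injective and $G$-equivariant, the amenability of $G$ (which makes the full and reduced crossed products coincide, where injectivity of equivariant maps is preserved) forces $\pi' = \pi \rtimes G$ to be injective. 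Theorem \ref{thm:GIUT} then shows $\Psi$ is injective, completing the proof.
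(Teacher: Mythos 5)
A preliminary remark: the paper does not prove this statement at all --- Theorem \ref{thm:HaoNg} is quoted verbatim as the main result of \cite{HaoNg}, so there is no internal proof against which to compare your attempt line by line. Judged on its own terms, your outline reproduces the architecture of Hao and Ng's actual argument: exhibit $(\pi',\tau')$ as a covariant Toeplitz representation of $(A\rtimes_\alpha G, H\rtimes_\beta G)$, obtain $\Psi$ from universality, prove surjectivity by a density argument, and prove injectivity via Theorem \ref{thm:GIUT} using the gauge action $\bar\delta$ induced by the $\mathbb{T}$-action commuting with $\gamma$, with amenability supplying injectivity of $\pi'=\pi\rtimes G$ (full $=$ reduced). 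Those pieces are correct as sketched.

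The proposal is nevertheless not a proof, because the step you yourself identify as ``where the bulk of the work resides'' --- covariance of $(\pi',\tau')$ on all of $J_{H\rtimes_\beta G}$ --- is left entirely open, and it is genuinely the load-bearing lemma. To close it one must (i) identify $\mathbb{K}(H\rtimes_\beta G)$ with $\mathbb{K}(H)\rtimes_\alpha G$ so that the induced map $\varphi'$, determined by $\varphi'(\theta_{\xi,\eta})=\tau'(\xi)\tau'(\eta)^*$, can be matched fibrewise against $\pi'$; and (ii) control the covariance ideal, e.g.\ by showing $J_{H\rtimes_\beta G}\subseteq J_H\rtimes_\alpha G$ (for amenable $G$ one in fact gets equality), since the covariance identity is only manifest on the latter. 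Step (ii) is a second, independent use of amenability: one needs the faithful conditional expectation $A\rtimes_\alpha G\to A$ (available because the full and reduced crossed products agree) to deduce from $\lambda'(a)\in\mathbb{K}(H\rtimes_\beta G)$ and $a\in(\ker\lambda')^\perp$ that the Fourier coefficients of $a$ lie in $J_H$. Your sketch attributes amenability solely to the injectivity of $\pi\rtimes G$; without its second use the covariance verification cannot be completed, so the central gap you flagged is real and not merely ``bookkeeping-heavy.''
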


We now specialize to the case where $G$ is abelian.  Let $H$ be a $C^*$\=/correspondence over a $C^*$\=/algebra $A$ and let $(\alpha, \beta)$ be a correspondence action of $G$ on $(A, H)$.  Given $\chi \in \hat{G}$, define $\hat{\alpha}_\chi : C_c(G, A) \rightarrow C_c(G, A)$ by $\hat{\alpha}_\chi(a)(s) = \chi(s)^{-1} a(s)$ and $\hat{\tau}_\chi : C_c(G, H) \rightarrow C_c(G, H)$ by $\hat{\tau}_\chi(\xi)(s) = \chi(s)^{-1} \xi(s)$.  Then $(\hat{\alpha}, \hat{\tau})$ defines a correspondence action on $(A \rtimes_\alpha G, H \rtimes_\beta G)$.  Now $(\hat{\alpha}, \hat{\beta})$ defines a strongly continuous actions on $\delta$ on $\mathcal{O}_{A \rtimes_\alpha G}(H \rtimes_\beta G)$.  Let $\hat{\gamma}$ denote the action on $\mathcal{O}_A(H) \rtimes_\gamma G$ dual to the action of $\gamma$; that is, $\hat{\gamma}$ is given by $\hat{\gamma}_\chi(x)(s) = \chi(s)^{-1} x(s)$ for every $x \in C_c(G, \mathcal{O}_A(H))$, $\chi \in \hat{G}$, and $s \in G$.

\begin{proposition}\label{prop:equivariant}
With the notation above, the isomorphism
\[ \mathcal{O}_{A \rtimes_\alpha G}(H \rtimes_\beta G) \rightarrow \mathcal{O}_A(H) \rtimes_\gamma G \]
is $\hat{G}$\=/equivariant.
\end{proposition}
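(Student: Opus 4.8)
The plan is to verify the intertwining relation $\Phi \circ \delta_\chi = \hat{\gamma}_\chi \circ \Phi$ directly on the generators, where $\Phi$ denotes the Hao--Ng isomorphism of Theorem \ref{thm:HaoNg} and $\chi \in \hat{G}$ is fixed. Let $(\Pi, \mathrm{T}) : (A \rtimes_\alpha G, H \rtimes_\beta G) \to \mathcal{O}_{A \rtimes_\alpha G}(H \rtimes_\beta G)$ be the universal covariant representation. By the construction in Theorem \ref{thm:HaoNg}, $\Phi$ is the $*$-homomorphism induced by the covariant representation $(\pi', \tau')$, so that $\Phi \circ \Pi = \pi'$ and $\Phi \circ \mathrm{T} = \tau'$ on the dense subalgebras $C_c(G, A) \subseteq A \rtimes_\alpha G$ and $C_c(G, H) \subseteq H \rtimes_\beta G$. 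Since the images $\Pi(C_c(G,A))$ and $\mathrm{T}(C_c(G,H))$ generate $\mathcal{O}_{A \rtimes_\alpha G}(H \rtimes_\beta G)$ as a $C^*$-algebra and both $\Phi \circ \delta_\chi$ and $\hat{\gamma}_\chi \circ \Phi$ are $*$-homomorphisms, it suffices to show they agree on these two generating sets.

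Next I would record how the dual action $\delta$ acts on the canonical generators. Applying the construction of the induced action that precedes Theorem \ref{thm:HaoNg} to the correspondence action $(\hat{\alpha}, \hat{\beta})$ on $(A \rtimes_\alpha G, H \rtimes_\beta G)$, the automorphism $\delta_\chi$ is the one induced by the covariant representation $(\Pi \circ \hat{\alpha}_\chi, \mathrm{T} \circ \hat{\beta}_\chi)$. By the universal property of $\mathcal{O}_{A \rtimes_\alpha G}(H \rtimes_\beta G)$, this means
\[ \delta_\chi \circ \Pi = \Pi \circ \hat{\alpha}_\chi \qquad \text{and} \qquad \delta_\chi \circ \mathrm{T} = \mathrm{T} \circ \hat{\beta}_\chi, \]
where $\hat{\alpha}_\chi(a)(s) = \chi(s)^{-1} a(s)$ and $\hat{\beta}_\chi(\xi)(s) = \chi(s)^{-1} \xi(s)$.

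With these formulas in hand the verification is a short pointwise computation. For $a \in C_c(G, A)$ and $s \in G$,
\[ \Phi\bigl(\delta_\chi(\Pi(a))\bigr) = \pi'(\hat{\alpha}_\chi(a)), \qquad \pi'(\hat{\alpha}_\chi(a))(s) = \pi\bigl(\chi(s)^{-1} a(s)\bigr) = \chi(s)^{-1} \pi'(a)(s), \]
which is exactly $\hat{\gamma}_\chi(\pi'(a))(s) = \hat{\gamma}_\chi\bigl(\Phi(\Pi(a))\bigr)(s)$ by the definition of $\hat{\gamma}$. The same computation with $\tau$ in place of $\pi$ and $\hat{\beta}$ in place of $\hat{\alpha}$ gives $\Phi(\delta_\chi(\mathrm{T}(\xi))) = \hat{\gamma}_\chi(\Phi(\mathrm{T}(\xi)))$ for $\xi \in C_c(G, H)$. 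Thus $\Phi \circ \delta_\chi$ and $\hat{\gamma}_\chi \circ \Phi$ agree on the generators, and hence on all of $\mathcal{O}_{A \rtimes_\alpha G}(H \rtimes_\beta G)$.

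There is no genuine analytic obstacle: the statement reduces to the observation that, on generators, both $\delta_\chi$ and $\hat{\gamma}_\chi$ act by the same pointwise scalar multiplication $f(s) \mapsto \chi(s)^{-1} f(s)$, and $\Phi$ is defined so as to respect this pointwise structure. The only step that requires care is the bookkeeping in the second paragraph, namely correctly identifying the action of the induced automorphism $\delta_\chi$ on the generators $\Pi(a)$ and $\mathrm{T}(\xi)$ through the universal property; this is the part I would write out in full detail, after which the equivariance is immediate.
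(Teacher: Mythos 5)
Your proposal is correct and follows essentially the same route as the paper: the displayed computation $\pi'(\hat{\alpha}_\chi(a))(s) = \pi(\chi(s)^{-1}a(s)) = \chi(s)^{-1}\pi'(a)(s) = \hat{\gamma}_\chi(\pi'(a))(s)$ is exactly the paper's proof, with the identical argument for $\tau'$. The only difference is that you spell out the universal-property bookkeeping identifying $\delta_\chi \circ \Pi = \Pi \circ \hat{\alpha}_\chi$ and $\delta_\chi \circ \mathrm{T} = \mathrm{T} \circ \hat{\beta}_\chi$, which the paper leaves implicit.
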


\begin{proof}
We retain the notation from Theorem \ref{thm:HaoNg}.  Given $a \in C_c(G, A)$, $s \in G$, and $\chi \in \hat{G}$, we have
\[ \pi'(\hat{\alpha}_\chi(a))(s) = \pi(\hat{\alpha}_\chi(a)(s)) = \pi(\chi(s)^{-1} a(s)) = \chi(s)^{-1} \pi'(a)(s) = \hat{\gamma}_\chi(\pi'(a))(s). \]
Hence $\pi' \circ \hat{\alpha}_\chi = \hat{\gamma}_\chi \circ \pi'$.  Similarly, $\tau' \circ \hat{\alpha}_\chi = \hat{\gamma}_\chi \circ \tau'$ and the result follows.
\end{proof}

\section{Fredholm Operators and $K$\=/Theory}\label{sec:KTheory}

In this section we outline the main results from \cite{ExelFredholmOperators} on the relationship between $K$\=/theory and Fredholm operators on Hilbert modules.  The utility of defining $K$\=/theory in terms of Fredholm operators is that $C^*$\=/correspondences will induce maps on $K$\=/theory by using a tensor product construction.  If fact Exel's motivation for this approach was to give a constructive proof that Morita equivalent $C^*$\=/algebras have the same $K$\=/theory.  In fact, if $H$ is an $A$--$B$ imprimitivity bimodule, then $H$ induces an isomorphism $[H] : K_*(A) \rightarrow K_*(B)$.  We will use Exel's techniques to show any row--finite $A$--$B$ $C^*$\=/correspondence $H$ induces a homomorphism $[H] : K_*(A) \rightarrow K_*(B)$ that preserves the order structure on $K_0$.

\begin{definition}
Suppose $A$ is a $C^*$\=/algebra and $M$ and $N$ are Hilbert $A$\=/modules.  An adjointable operator $T : M \rightarrow N$ is called \emph{$A$\=/Fredholm}, or just \emph{Fredholm}, if there is an adjointable operator $S : N \rightarrow M$ such that $1 - TS$ and $1 - ST$ are compact.  We say $T$ is \emph{regular} if in addition we can choose $S$ with $STS = S$ and $TST = T$.
\end{definition}

\begin{definition}
We say a Hilbert $A$\=/module $M$ has \emph{finite rank} if the identity operator $1_M$ is a compact operator on $M$.
\end{definition}

It is a well known fact that for a unital $C^*$\=/algebra $A$, the group $K_0(A)$ can be viewed as the Grothendieck group of the isomorphism classes of finitely generated Hilbert $A$\=/modules. See \cite[Exercise 15.K]{Wegge-Olsen} for an outline of the proof.  If $A$ is a (not necessarily unital) $C^*$\=/algebra and $M$ is a finitely generated Hilbert $A$\=/module, then $M$ is also a finitely generated Hilbert $\tilde{A}$--module, where $\tilde{A}$ is the unitization of $A$.

\begin{definition}\label{defn:RankOfModule}
If $A$ is a $C^*$\=/algebra and $M$ is a finite rank Hilbert $A$\=/module, the class of $M$ in $K_0(A)$ will be denoted $\operatorname{rank}(M)$.
\end{definition}

\begin{proposition}[Proposition 3.3 in \cite{ExelFredholmOperators}]
Suppose $T : M \rightarrow N$ is a regular Fredholm operator between Hilbert $A$\=/modules.  Then $\ker(T)$ and $\ker(T^*)$ are both finite rank Hilbert $A$\=/modules.
\end{proposition}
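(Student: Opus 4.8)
The plan is to realize $\ker(T)$ and $\ker(T^*)$ as the ranges of \emph{compact} self-adjoint projections, and then to read off finite rank from a corner identification of the compact operators. First I would use regularity to pin down the kernel. Since $STS = S$ and $TST = T$, the operator $ST$ is idempotent, because $(ST)(ST) = S(TST) = ST$, and hence $e := 1 - ST$ is idempotent as well. A one-line computation gives $\ker(ST) = \ker(T)$: if $ST\xi = 0$ then $T\xi = TST\xi = 0$, and conversely $T\xi = 0$ forces $ST\xi = 0$. Consequently $e$ is an idempotent whose range is exactly $\ker(T)$, acting as the identity there. The essential point is that $e = 1 - ST$ is \emph{compact}, by the Fredholm hypothesis.

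Next I would replace the compact idempotent $e$ by a genuine self-adjoint projection with the same range. Put $h = 1 - (e - e^*)^2$; since $e - e^*$ is skew-adjoint, $h \geq 1$ is positive and invertible in $\mathbb{B}(M)$, and $e$ commutes with $h$, hence with $h^{-1}$. The element $p := e e^* h^{-1}$ is then the standard self-adjoint projection associated to $e$: one checks $pe = e$ and $ep = p$, whence $p^2 = p$ and $pM = eM = \ker(T)$. Because $\mathbb{K}(M)$ is an ideal of $\mathbb{B}(M)$ and $e \in \mathbb{K}(M)$, we get $p = e \, (e^* h^{-1}) \in \mathbb{K}(M)$. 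Thus $p$ is a \emph{compact} projection onto $\ker(T)$.

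Finally, since $p$ is a self-adjoint projection, $\ker(T) = pM$ is an orthogonally complemented submodule, and there is the canonical $*$-isomorphism $\mathbb{K}(pM) \cong p \mathbb{K}(M) p$ under which the identity $1_{pM}$ corresponds to $p \cdot 1 \cdot p = p$. As $p \in \mathbb{K}(M)$ and $p = ppp$, this identifies $1_{\ker(T)}$ with an element of $\mathbb{K}(\ker(T))$, so $\ker(T)$ has finite rank in the sense of Definition \ref{defn:RankOfModule}. For $\ker(T^*)$ I would observe that $T^*$ is itself a regular Fredholm operator with generalized inverse $S^*$: taking adjoints of $STS = S$ and $TST = T$ yields $S^* T^* S^* = S^*$ and $T^* S^* T^* = T^*$, while $1 - T^* S^* = (1 - ST)^*$ and $1 - S^* T^* = (1 - TS)^*$ are compact as adjoints of compact operators. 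Running the preceding argument for $T^*$ then shows $\ker(T^*)$ has finite rank.

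The main obstacle is the middle step. A closed submodule of a Hilbert module need not be orthogonally complemented, so one cannot analyze $1_{\ker(T)}$ directly as a corner of $\mathbb{K}(M)$. Passing through the self-adjoint projection $p$ with the same range as the idempotent $1 - ST$, and verifying that compactness survives this passage via the ideal property of $\mathbb{K}(M)$, is precisely what legitimizes the corner identification and the finite-rank conclusion.
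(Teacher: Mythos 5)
Your proof is correct. The paper itself gives no proof of this proposition---it is quoted from Exel's paper as Proposition 3.3---and your argument (realizing $\ker(T)$ as the range of the compact idempotent $1 - ST$, passing to the associated compact self-adjoint projection $p = ee^*h^{-1}$, and identifying $1_{\ker(T)}$ with $p$ inside the corner $p\,\mathbb{K}(M)\,p$) is essentially the argument given in that reference, with all the verifications ($ST$ idempotent, $\ker(ST)=\ker(T)$, $p$ compact via the ideal property, and the adjoint trick for $\ker(T^*)$) carried out correctly.
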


The following definition is a Hilbert module version of the classical Fredholm index.

\begin{definition}
If $T$ is a regular Fredholm operator between Hilbert $A$\=/modules, define $\operatorname{ind}(T)$ in $K_0(A)$ by
\[ \operatorname{ind}(T) = \operatorname{rank}(\ker{T}) - \operatorname{rank}(\ker(T^*)). \]
\end{definition}

\begin{remark}
The Fredholm index can be defined for all Fredholm operators.  The idea is that every Fredholm operator is ``stably regular.''  That is, if $T : M \rightarrow N$ is Fredholm, then there is a regular Fredholm operator $\tilde{T} : M \oplus A^n \rightarrow N \oplus A^n$ extending $T$.  Then $\operatorname{ind}(T)$ is defined to be $\operatorname{ind}(\tilde{T})$.  See Proposition 3.8 and Lemma 3.9 in \cite{ExelFredholmOperators} for the details.
\end{remark}

Let $F_0(A)$ denote the set of all Fredholm operators on Hilbert $A$\=/modules.  (To avoid set\=/theoretic issues, we should fix some sufficiently large Hilbert $A$\=/module $H$ and only consider Fredholm operators whose domain and codomain are submodules of $H$.  However, we will ignore these issues in what follows.)  $F_0(A)$ forms a unital, abelian semigroup with addition given by direct sum.  We define an equivalence relation on $F_0(A)$ by $S \sim T$ if there is a positive integer $n$ such that $S \oplus T^* \oplus 1_{A^n}$ is a compact perturbation of an invertible operator.  Let $F(A)$ denote the quotient $F_0(A) / \sim$ and given $S \in F_0(A)$, let $[S]$ denote that class of $S$ in $F(A)$.

\begin{theorem}[Corollary 3.17 in \cite{ExelFredholmOperators}]
The set $F(A)$ is an abelian group with $[S] + [T] = [S \oplus T]$ and $-[S] = [S^*]$.  Moreover, the Fredholm index induces a group isomorphism
\[ \operatorname{ind} : F(A) \rightarrow K_0(A). \]
\end{theorem}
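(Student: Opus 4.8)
The plan is to reduce the entire statement to two elementary computational properties of the index together with one genuinely substantive lemma. Throughout I work with regular Fredholm operators, reducing the general case via the stably-regular extension described in the remark above, so that Proposition~3.3 applies and all the kernels in question are finite rank and have a well-defined rank in $K_0(A)$.

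First I would record the two bookkeeping properties of $\operatorname{ind}$ on $F_0(A)$. Since $\ker(S \oplus T) = \ker S \oplus \ker T$ and $(S \oplus T)^* = S^* \oplus T^*$, and since $\operatorname{rank}$ is additive over direct sums of finite-rank modules, we get $\operatorname{ind}(S \oplus T) = \operatorname{ind}(S) + \operatorname{ind}(T)$. Likewise $S^{**} = S$ gives $\operatorname{ind}(S^*) = \operatorname{rank}(\ker S^*) - \operatorname{rank}(\ker S) = -\operatorname{ind}(S)$. In particular, since $1_{A^n}$ is invertible and hence has vanishing index, for all $S, T$ and all $n$ we obtain
\[ \operatorname{ind}(S \oplus T^* \oplus 1_{A^n}) = \operatorname{ind}(S) - \operatorname{ind}(T). \]

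The heart of the argument is the equivalence: \emph{for a Fredholm operator $R$, one has $\operatorname{ind}(R) = 0$ if and only if $R \oplus 1_{A^n}$ is a compact perturbation of an invertible operator for some $n \geq 0$.} The backward direction is easy: a compact perturbation of an invertible operator is Fredholm of index zero (the index is unchanged under compact perturbations, a fact I would take from Exel's earlier propositions, or verify via the norm-continuous path $U + tK$ of Fredholm operators joining it to an invertible), and $\operatorname{ind}(R \oplus 1_{A^n}) = \operatorname{ind}(R)$ by additivity. The forward direction is where the real work lies. Assuming $R : M \to N$ is regular with $\operatorname{ind}(R) = 0$, regularity splits the modules as $M = \ker R \oplus M'$ and $N = \ker R^* \oplus N'$, with $R$ restricting to an isomorphism $M' \to N'$. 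The hypothesis $\operatorname{ind}(R) = 0$ says precisely that $[\ker R] = [\ker R^*]$ in $K_0(A)$, so by the standard stable-isomorphism fact there is an $n$ and a Hilbert-module isomorphism $V : \ker R \oplus A^n \to \ker R^* \oplus A^n$. Regrouping $M \oplus A^n = (\ker R \oplus A^n) \oplus M'$ and $N \oplus A^n = (\ker R^* \oplus A^n) \oplus N'$, set $\tilde R := V \oplus (R|_{M'})$; this is invertible, and $\tilde R - (R \oplus 1_{A^n})$ is supported on the finite-rank module $\ker R \oplus A^n$ and is therefore compact. Hence $R \oplus 1_{A^n}$ is a compact perturbation of the invertible operator $\tilde R$.

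With this equivalence in hand the rest is formal. Combining it with the displayed identity shows $S \sim T$ if and only if $\operatorname{ind}(S) = \operatorname{ind}(T)$; thus $\sim$ is the kernel of the function $\operatorname{ind}$ and so is automatically an equivalence relation, and $\operatorname{ind}$ descends to a well-defined \emph{injective} map $F(A) \to K_0(A)$. Additivity makes the operation $[S] + [T] = [S \oplus T]$ well-defined and turns this map into a group homomorphism, while $\operatorname{ind}(S^*) = -\operatorname{ind}(S)$ exhibits $[S^*]$ as the inverse of $[S]$, so transporting the group structure of $K_0(A)$ back shows $F(A)$ is an abelian group. Surjectivity is immediate: any class in $K_0(A)$ is $\operatorname{rank}(M) - \operatorname{rank}(N)$ for finite-rank modules $M, N$, and the zero operator $0 : M \to N$ is regular Fredholm (parametrix $0$, with $1_M$ and $1_N$ compact) with $\ker(0) = M$ and $\ker(0^*) = N$, so $\operatorname{ind}(0 : M \to N) = \operatorname{rank}(M) - \operatorname{rank}(N)$. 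I expect the forward direction of the equivalence above — building $\tilde R$ from a stable isomorphism of the two kernels — to be the main obstacle; the delicate point is that the correction $V$ differs from $R \oplus 1_{A^n}$ only by a compact operator, which is exactly where the finite rank of $\ker R \oplus A^n$ is essential.
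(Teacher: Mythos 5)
The paper does not actually prove this statement --- it is quoted as Corollary~3.17 of Exel's paper --- so I am judging your argument against the standard one rather than against anything in the text. Your architecture (reduce everything to the equivalence ``$\operatorname{ind}(R)=0$ if and only if $R\oplus 1_{A^n}$ is a compact perturbation of an invertible for some $n$,'' after which well\-definedness, the group structure, injectivity, and surjectivity are formal) is exactly the standard route, and the formal deductions from the key lemma are fine.

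The gap is in the step you yourself flag as delicate. You justify the compactness of $\tilde R-(R\oplus 1_{A^n})$ by asserting that $\ker R\oplus A^n$ is finite rank. With the paper's definition (finite rank means the identity is compact), $A^n$ is finite rank if and only if $A$ is unital: $\mathbb{K}(A^n)=M_n(A)$, while $1_{A^n}$ corresponds to the identity matrix, which lies in $M_n(A)$ only when $A$ has a unit. The statement is made for an arbitrary $C^*$\-algebra $A$, and the paper applies this machinery to non\-unital algebras, so unitality cannot be assumed silently. Concretely, writing $V$ in block form relative to $\ker R\oplus A^n\to\ker R^*\oplus A^n$, the three blocks touching $\ker R$ or $\ker R^*$ are compact because those summands are finite rank, but the $A^n\to A^n$ corner of $V-(0\oplus 1_{A^n})$ need not be compact for an arbitrary stable isomorphism $V$: the stabilization fact produces $V$ from a partial isometry over $\tilde A$ whose image in matrices over $\tilde A/A\cong\mathbb{C}$ is some scalar unitary $u$ on the free summand, so that corner is a compact perturbation of $u$ rather than of $1$. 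The repair is to normalize $V$ by composing with $1_{\ker R}\oplus u^*$ so its scalar part becomes $0\oplus 1_n$; that is a genuine extra step, not a consequence of finite rank. The same oversight recurs in your surjectivity argument, which assumes every class in $K_0(A)$ is a difference of ranks of finite\-rank Hilbert $A$\-modules; for non\-unital $A$ this can fail outright (e.g.\ $A=C_0(\mathbb{R}^2)$ has no nonzero projections in any $M_k(A)$ yet $K_0(A)\cong\mathbb{Z}$), which signals that the clean argument you give really lives over unital $A$ and that the general case must either be restricted to unital algebras (as the source effectively does) or handled by a separate reduction you have not supplied.
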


For our purposes, it will also be useful to describe the positive cone $K_0(A)^+$ in terms of Hilbert $A$\=/modules.

\begin{theorem}\label{thm:PositiveCone}
Suppose $x \in K_0(A)$.  Then $x \geq 0$ if and only if there is a surjective, regular, Fredholm operator $T \in F_0(A)$ such that $\operatorname{ind}(T) = x$.
\end{theorem}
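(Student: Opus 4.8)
The plan is to prove both implications by directly relating the index of a regular Fredholm operator to the ranks of its kernels, as computed in the proposition above and the definition of $\operatorname{ind}$. The one ingredient I take for granted is the identification of the positive cone: $x \geq 0$ in $K_0(A)$ if and only if $x = \operatorname{rank}(M)$ for some finite rank Hilbert $A$-module $M$. This is essentially the definition of the order structure on $K_0$, since finite rank Hilbert $A$-modules correspond exactly to projections in $M_n(A)$ (a module $M$ has finite rank precisely when $1_M$ is compact, which for a complemented submodule of $A^n$ means the associated projection lies in $\mathbb{K}(A^n) = M_n(A)$), and $\operatorname{rank}$ sends such a module to the class of that projection. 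I also use the standard fact that for any adjointable $T : M \to N$ one has $\ker(T^*) = (\operatorname{ran} T)^\perp$, which follows immediately from the identity $\langle T^* \eta, \xi \rangle = \langle \eta, T \xi \rangle$.

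For the forward implication, suppose $T : M \to N$ is a surjective, regular, Fredholm operator with $\operatorname{ind}(T) = x$. By the preceding proposition both $\ker T$ and $\ker T^*$ are finite rank, so the index is well defined and equals $\operatorname{rank}(\ker T) - \operatorname{rank}(\ker T^*)$. Surjectivity gives $\operatorname{ran} T = N$, hence $\ker T^* = N^\perp = 0$ by the observation above, so $\operatorname{rank}(\ker T^*) = 0$ and $x = \operatorname{rank}(\ker T)$. Since $\ker T$ is a finite rank module, its rank lies in the positive cone, and therefore $x \geq 0$.

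For the converse, suppose $x \geq 0$ and write $x = \operatorname{rank}(M)$ for a finite rank Hilbert $A$-module $M$. Consider the zero operator $T = 0 : M \to 0$ onto the zero module. Taking $S = 0 : 0 \to M$, we have $1_0 - TS = 0$ and $1_M - ST = 1_M$, the latter being compact precisely because $M$ has finite rank; moreover $STS = S$ and $TST = T$ hold trivially, so $T$ is a regular Fredholm operator. It is visibly surjective, and $\ker T = M$ while $\ker T^* = \ker(0 : 0 \to M) = 0$, so $\operatorname{ind}(T) = \operatorname{rank}(M) - 0 = x$. This produces the desired operator.

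The only genuine subtlety, and the step I would be most careful about, is the clean identification of $K_0(A)^+$ with the set of ranks of finite rank modules, particularly in the non-unital case, where one must check that finite rank Hilbert $A$-modules really correspond to projections over $A$ rather than over the unitization $\tilde{A}$, so that $\operatorname{rank}(M)$ lands in $K_0(A)^+$ and not merely in $K_0(\tilde{A})^+$. Once this is in place both directions are short, the reverse implication being essentially the remark that annihilating a finite rank module is a surjective regular Fredholm operator of the prescribed index.
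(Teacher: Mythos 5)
Your proof is correct and follows essentially the same route as the paper: the converse is the identical construction (the zero operator $M \to 0$ on a finite rank module representing $x$), and the forward direction likewise reduces to showing $\operatorname{rank}(\ker(T^*)) = 0$. The only cosmetic difference is that you get $\ker(T^*) = (\operatorname{ran} T)^\perp = 0$ directly from surjectivity, whereas the paper first deduces $TS = 1$ from regularity, concludes $S$ is injective, and cites Exel's Proposition 3.5(ii) to identify $\operatorname{rank}(\ker(T^*))$ with $\operatorname{rank}(\ker(S))$ --- both are valid.
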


\begin{proof}
If $x \geq 0$, there is a finitely generated Hilbert $A$\=/module such that $\operatorname{rank}{M} = x$.  The operator $T : M \rightarrow 0$ satisfies the conditions.  Conversely, suppose $T : M \rightarrow N$ is given as above.  Choose $S : N \rightarrow M$ such that $1 - ST$ and $1 - TS$ are compact, $TST = T$ and $STS = S$.  Since $TST = T$ and $T$ is surjective, $TS = 1$.  Thus $S$ is injective.  By Proposition 3.5(ii) in \cite{ExelFredholmOperators},
\[ \operatorname{rank}(\ker(T^*)) = \operatorname{rank}(\ker(S)) = 0, \]
and hence $x = \operatorname{ind}(T) = \operatorname{rank}(\ker(T)) \geq 0$.
\end{proof}

\begin{proposition}\label{prop:CorrKTheory}
Suppose $A$ and $B$ are $C^*$\=/algebras and $H$ is an $A$--$B$ $C^*$\=/correspondence such that $\lambda(a)$ is compact for every $a \in A$.  If $T \in F_0(A)$, then $T \otimes 1_H \in F_0(B)$ and the map $T \mapsto T \otimes 1_H$ induces a positive group homomorphism $[H] : K_0(A) \rightarrow K_0(B)$.
\end{proposition}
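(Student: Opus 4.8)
The plan is to treat $T \mapsto T \otimes 1_H$ as the operator-level shadow of the internal tensor product functor $M \mapsto M \otimes_A H$ from Hilbert $A$\=/modules to Hilbert $B$\=/modules, and to verify that this functor is compatible with all the structure used to build $F(A)$: compositions, adjoints, direct sums, the free modules $A^n$, and above all the compact operators. Once compacts are sent to compacts, Fredholmness and the passage to $F(A) \cong K_0(A)$ follow formally, and positivity will come from the description of the cone in Theorem \ref{thm:PositiveCone}.

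The technical heart, and the step I expect to be the main obstacle, is showing that the hypothesis $\lambda(A) \subseteq \mathbb{K}(H)$ forces $K \otimes 1_H \in \mathbb{K}(M \otimes_A H)$ whenever $K \in \mathbb{K}(M)$. Since $T \mapsto T \otimes 1_H$ is a contractive $*$\=/homomorphism $\mathbb{B}(M) \to \mathbb{B}(M \otimes_A H)$ and $\mathbb{K}(M) = \overline{\operatorname{span}} \{\theta_{\xi,\eta}\}$, it suffices, after polarization, to treat $\theta_{\xi,\xi} \otimes 1_H$. For $\xi \in M$ I would consider the adjointable operator $T_\xi : H \to M \otimes_A H$, $T_\xi(h) = \xi \otimes h$, whose adjoint is $T_\xi^*(\zeta \otimes h) = \lambda(\langle \xi, \zeta \rangle)h$; a direct computation gives $\theta_{\xi,\xi} \otimes 1_H = T_\xi T_\xi^*$ while $T_\xi^* T_\xi = \lambda(\langle \xi, \xi \rangle)$, which is compact by hypothesis. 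The crux is then the general fact that $S^* S$ compact implies $S S^*$ compact: one computes $(S S^*)^2 = S (S^* S) S^*$, which lies in the closed ideal $\mathbb{K}(M \otimes_A H)$ because $S^* S$ is compact, and then extracts the positive square root $S S^* = ((S S^*)^2)^{1/2}$ inside that ideal. This is exactly the place where the compactness of the left action is essential.

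With compactness secured, the remaining steps are formal. If $T \in F_0(A)$ has a parametrix $S$, then $S \otimes 1_H$ is a parametrix for $T \otimes 1_H$, since $1 - (T \otimes 1_H)(S \otimes 1_H) = (1 - TS) \otimes 1_H$ is compact and similarly on the other side, so $T \otimes 1_H \in F_0(B)$. To see the assignment descends to $F(A) \to F(B)$, I would record that the functor is additive, $(S \oplus T) \otimes 1_H \cong (S \otimes 1_H) \oplus (T \otimes 1_H)$, intertwines adjoints, $(T^*) \otimes 1_H = (T \otimes 1_H)^*$, carries $1_{A^n}$ to the identity $1_{H^n}$, and sends invertibles to invertibles. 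Hence if $S \oplus T^* \oplus 1_{A^n}$ is a compact perturbation of an invertible operator, so is $(S \otimes 1_H) \oplus (T \otimes 1_H)^* \oplus 1_{H^n}$; since $1_{H^n}$ is invertible it contributes trivially in the group, giving $S \otimes 1_H \sim T \otimes 1_H$. The same additivity identity shows the induced map is a group homomorphism, and transporting along the isomorphism $\operatorname{ind} : F(\cdot) \to K_0(\cdot)$ produces $[H] : K_0(A) \to K_0(B)$.

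Finally, for positivity I would invoke Theorem \ref{thm:PositiveCone}: given $x \geq 0$, choose a surjective regular Fredholm operator $T : M \to N$ with $\operatorname{ind}(T) = x$, and check that $T \otimes 1_H$ inherits all three properties. Regularity passes through because $(T \otimes 1_H)(S \otimes 1_H)(T \otimes 1_H) = (TST) \otimes 1_H = T \otimes 1_H$ and symmetrically for $S$; surjectivity passes through because surjectivity of $T$ forces $TS = 1_N$, whence $(T \otimes 1_H)(S \otimes 1_H) = 1_{N \otimes_A H}$. Thus $T \otimes 1_H$ is a surjective regular Fredholm operator, so $[H](x) = \operatorname{ind}(T \otimes 1_H) \geq 0$ by Theorem \ref{thm:PositiveCone}, and $[H]$ is positive.
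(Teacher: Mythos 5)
Your proof is correct and follows essentially the same route as the paper: tensor a parametrix to get Fredholmness, tensor the compact perturbation of an invertible to see the map descends to $F(A)\to F(B)$ and is additive, and use Theorem \ref{thm:PositiveCone} together with the observation that surjective regular Fredholm operators stay surjective, regular, and Fredholm under $-\otimes 1_H$. The only difference is that you explicitly prove the key fact that $K\otimes 1_H$ is compact when $K$ is compact (via $\theta_{\xi,\xi}\otimes 1_H=T_\xi T_\xi^*$ with $T_\xi^*T_\xi=\lambda(\langle\xi,\xi\rangle)$), which the paper merely asserts and, in the following remark, attributes to an adaptation of Exel's argument; your verification of that step is sound.
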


\begin{proof}
Suppose $T : M \rightarrow N$ is a Fredholm operator and choose $S : N \rightarrow M$ with $1 - ST$ and $1 - TS$ compact.  Since $\lambda(A) \subseteq \mathbb{K}_B(H)$, $(1 - ST) \otimes 1_H$ and $(1 - TS) \otimes 1_H$ are compact.  It follows that $T \otimes 1_H : M \otimes_A H \rightarrow N \otimes_A H$ is Fredholm.

Now suppose $S, T \in F_0(A)$ are such that $S \tilde T$.  Then there is a positive integer $n$ and a compact operator $R$ such that
\[ S \oplus T^* \oplus 1_{A^n} + R \]
is invertible.  Therefore,
\[ (S \otimes 1_H) \oplus (T^* \otimes 1_H) \oplus (1_{A^n} \otimes 1_H) + R \otimes 1_H = (S \oplus T^* \oplus 1_{A^n} + R) \otimes 1_H \]
is invertible.  Since $R \otimes 1_H$ is compact and $1_{A^n} \otimes 1_H$ has index 0, we have
\[ \operatorname{ind}(S \otimes 1_H) = \operatorname{ind}(T \otimes 1_H). \]
This proves $T \mapsto T \otimes 1_H$ induces a group homomorphism $[H] : K_0(A) \rightarrow K_0(B)$.

It remains to show $[H]$ is positive.  However, this follows from Theorem \ref{thm:PositiveCone} since if $T$ is a surjective, regular, Fredholm operator, then so is $T \otimes 1_H$.
\end{proof}

\begin{remark}
The argument above is same construction and proof that Exel uses in \cite{ExelFredholmOperators} to build a map $[H] : K_0(A) \rightarrow K_0(B)$.  Since Exel was concerned mostly with understanding imprimitivity bimodules, $H$ was assumed to be a left--full Hilbert bimodule.  However, this assumption was only used to show if $T : M \rightarrow N$ is a compact operator between Hilbert $A$\=/modules, then $T \otimes_A H$ is compact.  Since this also holds when $H$ is row--finite, the proof also holds in our setting.
\end{remark}

Any row--finite $C^*$\=/correspondence also induces a group homomorphism on $K_1$.  To see this, recall $K_1(A) = K_0(SA)$, where $SA := C_0(\mathbb{R}) \otimes A$ is the suspension of $A$.  If $H$ is a row--finite $A$--$B$ $C^*$\=/correspondence, then $SH := C_0(\mathbb{R}) \otimes H$ is a row--finite $SA$--$SB$ $C^*$\=/correspondence in the obvious way.  Hence $SH$ induces a homomorphism on $K_0$ that can be viewed a homomorphism $[H] : K_1(A) \rightarrow K_1(B)$.

\begin{proposition}\label{prop:CorrKTheoryProperties}
Let $A$, $B$, and $C$ be $C^*$\=/algebras.
\begin{enumerate}
  \item If $H$ and $K$ are isomorphic row--finite $A$--$B$ $C^*$\=/correspondences, then
  \[ [H] = [K] : K_*(A) \rightarrow K_*(B). \]
  \item If $H$ is a row--finite $A$--$B$ $C^*$\=/correspondence and $K$ is a row--finite $B$--$C$ $C^*$\=/correspondence, then
  \[ [H \otimes_B K] = [K] \circ [H] : K_*(A) \rightarrow K_*(C). \]
  \item If $\varphi : A \rightarrow B$ is a morphism, then we may view $B$ as a row--finite $A$--$B$ $C^*$\=/correspondence as in Example \ref{eg:GeneralizedMorphism}.  Then $[B] = [\varphi] : K_*(A) \rightarrow K_*(B)$.
\end{enumerate}
\end{proposition}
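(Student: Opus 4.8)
The plan is to prove each of the three statements by tracking the effect of the construction $T \mapsto T \otimes 1_H$ from Proposition \ref{prop:CorrKTheory} on Fredholm operators, and then reducing everything to a statement about isomorphisms of tensor products of Hilbert modules. Since each map $[H]$ on $K_1$ is defined via the suspension $SH$ on $K_0$, it suffices in every case to verify the identities on $K_0$; the $K_1$ statements then follow formally, using that suspension is compatible with all three operations (e.g. $S(H \otimes_B K) \cong SH \otimes_{SB} SK$, etc.). So throughout I would work with a Fredholm operator $T : M \to N$ between Hilbert $A$-modules and compute $\operatorname{ind}$ of the induced operator via the identification $\operatorname{ind} : F(A) \xrightarrow{\sim} K_0(A)$.

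For (1), if $u : H \to K$ is an isomorphism of $A$--$B$ correspondences, then for each Hilbert $A$-module $M$ the map $1_M \otimes u$ is a unitary $M \otimes_A H \to M \otimes_A K$ intertwining $T \otimes 1_H$ and $T \otimes 1_K$. Conjugating a Fredholm operator by a unitary preserves its class in $F(B)$, hence preserves the index, so $[H] = [K]$. For (3), when $H = B$ is the correspondence associated to a $*$-homomorphism $\varphi : A \to B$ as in Example \ref{eg:GeneralizedMorphism}, the point is that for a finitely generated projective module $M = pA^n$ one has a natural isomorphism $M \otimes_A B \cong \varphi(p) B^n$, so $\operatorname{ind}(T \otimes 1_B)$ computes exactly the image of $\operatorname{rank}(M)$ under the map $K_0(A) \to K_0(B)$ induced by $\varphi$; I would phrase this using surjective regular Fredholm operators and Theorem \ref{thm:PositiveCone} to handle general positive classes, then extend to all of $K_0$ by linearity. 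The main content is really the associativity statement (2).

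For (2), the key observation is the associativity of the interior tensor product: for any Hilbert $A$-module $M$ there is a natural unitary isomorphism
\[ (M \otimes_A H) \otimes_B K \;\cong\; M \otimes_A (H \otimes_B K) \]
of Hilbert $C$-modules, and under this identification the operator $(T \otimes 1_H) \otimes 1_K$ corresponds to $T \otimes 1_{H \otimes_B K}$. Taking indices, $\operatorname{ind}(T \otimes 1_{H \otimes_B K}) = \operatorname{ind}((T \otimes 1_H) \otimes 1_K)$, which by the very definition of the maps unwinds to $[K]\big([H](\operatorname{ind} T)\big)$; hence $[H \otimes_B K] = [K] \circ [H]$. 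The hard part will be the bookkeeping needed to apply this cleanly: I must check that $H \otimes_B K$ is again row-finite (so that $[H \otimes_B K]$ is even defined), which amounts to showing $\lambda_{H \otimes_B K}(a) = \lambda_H(a) \otimes 1_K$ is compact whenever $\lambda_H(a)$ is compact — this is exactly the fact, noted in the remark following Proposition \ref{prop:CorrKTheory}, that tensoring a compact operator with $1_K$ stays compact when $K$ is row-finite. I would verify this first, then make the naturality of the associativity unitary precise enough to confirm it genuinely intertwines the two induced Fredholm operators; once that compatibility is in hand, the index computation is immediate.
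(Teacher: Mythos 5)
Your proposal is correct and follows essentially the same route as the paper: the paper's proof consists of the unitary-intertwining argument for part (1) (identical to yours, via $1_M \otimes u$ and invariance of the index under composition with invertibles) and dismisses (2) and (3) as ``easy computations,'' for which your sketches --- associativity of the interior tensor product identifying $(T \otimes 1_H) \otimes 1_K$ with $T \otimes 1_{H \otimes_B K}$, the row-finiteness check for $H \otimes_B K$, and the identification $M \otimes_A B \cong \varphi(p)B^n$ --- are the standard arguments the author evidently has in mind. Your reduction of the $K_1$ statements to $K_0$ via suspension likewise matches how the paper defines $[H]$ on $K_1$.
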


\begin{proof}
The proofs are easy computations.  For example, we prove (i) in the case of $K_0$.  Suppose $H \cong K$ as correspondences.  Then there is a unitary $u \in \mathbb{B}(H, K)$ such that $\lambda(a) u = u \lambda(a)$ for every $a \in A$.  If $T : M \rightarrow N$ is a Fredholm operators between Hilbert $A$--modules, then the diagram
\begin{center}
\begin{tikzcd}[column sep = large]
M \otimes_A H \arrow{r}{T \otimes 1_H} \arrow{dd}{1_M \otimes u} & N \otimes_A H \arrow{dd}{1_N \otimes u} \\ \\ M \otimes_A K \arrow{r}{T \otimes 1_K} & N \otimes_A K
\end{tikzcd}
\end{center}
commutes.  Since $1_M \otimes u$ and $1_N \otimes u$ are invertible,
\[ \operatorname{ind}(T \otimes 1_H) = \operatorname{ind}((1_N \otimes u)(T \otimes 1_H)) = \operatorname{ind}((T \otimes 1_K)(1_M \otimes u)) = \operatorname{ind}(T \otimes 1_K) \]
by \cite[Proposition 3.11(ii)]{ExelFredholmOperators}.  Thus $[H](T) = [K](T)$.
\end{proof}

The following will be useful when working with $C^*$\=/correspondences that are not assumed to be non\=/degenerate.

\begin{proposition}\label{prop:NonDegKTheory}
If $H$ is a row--finite $A$--$B$ $C^*$\=/correspondence and $AH \subseteq K \subseteq H$ are isometric inclusions of $A$--$B$ correspondences, then $[K] = [H] : K_*(A) \rightarrow K_*(B)$.
\end{proposition}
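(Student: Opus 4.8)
The plan is to show that the inclusion $\iota : K \hookrightarrow H$ becomes a unitary after tensoring on the left by any Hilbert $A$\=/module $M$, and then to run the same index argument already used in the proof of Proposition \ref{prop:CorrKTheoryProperties}(i). Recall from Proposition \ref{prop:CorrKTheory} that $[H]$ is computed by $T \mapsto T \otimes 1_H$ on Fredholm operators. So the whole proposition reduces to the following assertion: for every Hilbert $A$\=/module $M$, the map $1_M \otimes \iota : M \otimes_A K \to M \otimes_A H$ is a unitary. The key observation is that, although $\iota$ is far from surjective, the balanced tensor products $M \otimes_A K$ and $M \otimes_A H$ only ``see'' the image of the left action, and $AH \subseteq K$ by hypothesis.

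The crux is the unitarity of $1_M \otimes \iota$. That it is isometric is immediate, since $K \hookrightarrow H$ preserves both the left $A$\=/action and the $B$\=/valued inner product, so $\langle m \otimes \zeta, m' \otimes \zeta' \rangle = \langle \zeta, \langle m, m' \rangle \zeta' \rangle$ is computed identically in $M \otimes_A K$ and in $M \otimes_A H$. For surjectivity I would use that every Hilbert $A$\=/module satisfies $M = \overline{MA}$: fixing an approximate unit $(e_\lambda)$ of $A$ and an elementary tensor $m \otimes \xi$ with $\xi \in H$, one has $m e_\lambda \to m$ in $M$, hence
\[ m \otimes \xi = \lim_\lambda (m e_\lambda) \otimes \xi = \lim_\lambda m \otimes (e_\lambda \xi), \]
and each $e_\lambda \xi \in AH \subseteq K$ lies in the image of $1_M \otimes \iota$. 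Since such tensors are dense, $1_M \otimes \iota$ has dense range; being isometric, it is a surjective isometry and therefore unitary. This density step is exactly where the hypothesis $AH \subseteq K$ is used, and it is the main point of the argument.

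There is one technical nuisance to dispose of first: for $[K]$ (and the Fredholm property of $T \otimes 1_K$) to make sense, one needs $K$ itself to be row\=/finite. This follows from row\=/finiteness of $H$. Indeed, for $a \in A$ one has $\lambda(a) = \lim_{\lambda,\mu} \lambda(e_\lambda a e_\mu) = \lim \lambda(e_\lambda)\lambda(a)\lambda(e_\mu)$ in $\mathbb{B}(H)$, and approximating $\lambda(a)$ by finite sums $\sum_k \theta_{\xi_k,\eta_k}$ gives approximations of $\lambda(e_\lambda a e_\mu)$ by $\sum_k \theta_{e_\lambda \xi_k,\, e_\mu \eta_k}$, whose generating vectors now lie in $AH \subseteq K$. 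Restricting to $K$ does not increase the $\mathbb{B}(\,\cdot\,)$\=/norm, so $\lambda|_K(a)$ is a norm limit of compact operators on $K$, whence $\lambda|_K(a) \in \mathbb{K}(K)$.

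Finally I would conclude by naturality. For any Fredholm $T : M \to N$, the square with horizontal maps $T \otimes 1_K$, $T \otimes 1_H$ and vertical unitaries $1_M \otimes \iota$, $1_N \otimes \iota$ commutes, so
\[ \operatorname{ind}(T \otimes 1_H) = \operatorname{ind}\big((T \otimes 1_H)(1_M \otimes \iota)\big) = \operatorname{ind}\big((1_N \otimes \iota)(T \otimes 1_K)\big) = \operatorname{ind}(T \otimes 1_K) \]
by \cite[Proposition 3.11(ii)]{ExelFredholmOperators}, exactly as in Proposition \ref{prop:CorrKTheoryProperties}(i). As every class in $K_0(A)$ is realized as some $\operatorname{ind}(T)$, this gives $[H] = [K]$ on $K_0$. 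The statement for $K_1$ then follows by applying the $K_0$ case to the suspended correspondences, noting that $(SA)(SH) \subseteq SK \subseteq SH$ are isometric inclusions of row\=/finite $SA$--$SB$ correspondences. The only genuine obstacle is the surjectivity/unitarity of $1_M \otimes \iota$; once that identification is in place, everything else is a routine transcription of the isomorphism\=/invariance argument.
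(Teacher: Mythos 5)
Your proof is correct and follows essentially the same route as the paper: the paper also reduces the statement to showing $M \otimes_A K = M \otimes_A H$ inside the Fredholm picture, using Cohen factorization $m = m'a$ to get $m \otimes \xi = m' \otimes a\xi \in M \otimes_A K$ where you instead use an approximate unit and density — an equivalent argument. Your additional check that $K$ is row-finite is a correct detail the paper leaves implicit.
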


\begin{proof}
If $M$ is a Hilbert $A$\=/module, there is an isometric embedding $M \otimes_A K \hookrightarrow M \otimes_A H$ given by $\xi \otimes \eta \mapsto \xi \otimes \eta$.  Suppose $\xi \in M$ and $\eta \in H$.  By Cohen factorization, we may write $\xi = \xi' a$ for some $\xi' \in M$ and $a \in H$.  Then
\[ \xi \otimes \eta = \xi' a \otimes \eta = \xi' \otimes a \eta \in M \otimes K \]
since $AH \subseteq K$.  As $M \otimes H$ is the closed span the $\xi \otimes \eta$ with $\xi \in M$ and $\eta \in H$, it follows that $M \otimes_A K = M \otimes_A H$.

Now given a Fredholm operator $T : M \rightarrow N$ between Hilbert $A$\=/modules, we have $T \otimes 1_K = T \otimes 1_H$ and it is clear that $[H](T) = [K](T)$.
\end{proof}

\begin{proposition}\label{prop:FaithfulKTheory}
Suppose $A$ and $B$ are unital $C^*$\=/algebras such that $B$ is stably finite and suppose $H$ is a row--finite $A$--$B$ $C^*$\=/correspondence.  If $H$ is faithful, then $[H] : K_0(A) \rightarrow K_0(B)$ is faithful.  That is, if $x \in K_0(A)$ is positive and $[H](x) = 0$, then $x = 0$.
\end{proposition}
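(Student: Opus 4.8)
The plan is to realize the positive class $x$ by an honest Hilbert module, tensor it with $H$, and then play the faithfulness of the left action against the stable finiteness of $B$. First, since $A$ is unital and $x \geq 0$, I would use Theorem \ref{thm:PositiveCone} (together with Definition \ref{defn:RankOfModule}) to choose a finite rank Hilbert $A$\=/module $M$ with $\operatorname{rank}(M) = x$; concretely, $M = pA^n$ for a projection $p \in M_n(A)$ with $[p] = x$. Taking the zero operator $T : M \rightarrow 0$, which is surjective, regular, and Fredholm with $\operatorname{ind}(T) = \operatorname{rank}(M) = x$, the definition of $[H]$ from Proposition \ref{prop:CorrKTheory} gives $[H](x) = \operatorname{ind}(T \otimes 1_H) = \operatorname{rank}(M \otimes_A H)$, since $T \otimes 1_H : M \otimes_A H \rightarrow 0$ has kernel $M \otimes_A H$ and trivial cokernel.

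Next I would identify the module $M \otimes_A H$ explicitly. Under the canonical isomorphism $A^n \otimes_A H \cong H^n$, the submodule $pA^n \otimes_A H$ corresponds to $\lambda^{(n)}(p) H^n$, where $\lambda^{(n)} : M_n(A) \rightarrow \mathbb{B}(H^n)$ denotes the amplification of the left action $\lambda$. Because $H$ is row\=/finite, each $\lambda(p_{ij})$ lies in $\mathbb{K}(H)$, so $\lambda^{(n)}(p)$ is a compact projection in $\mathbb{B}(H^n)$; hence $M \otimes_A H \cong \lambda^{(n)}(p) H^n$ is a finite rank, and therefore finitely generated projective, Hilbert $B$\=/module whose rank is exactly $[H](x) = 0$.

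The crux is then to pass from vanishing rank to a vanishing module using stable finiteness. Writing $M \otimes_A H \cong qB^m$ for a projection $q \in M_m(B)$, the hypothesis $\operatorname{rank}(M \otimes_A H) = [q] = 0$ means that $q \oplus 1_k \sim 1_k$ in $M_{m+k}(B)$ for some $k$, where $\sim$ is Murray--von Neumann equivalence and $1_k$ is regarded as the subprojection $0_m \oplus 1_k$ of $q \oplus 1_k$. Since $B$ is stably finite, $q \oplus 1_k$ is a finite projection that is equivalent to a subprojection of itself, so $q \oplus 1_k = 1_k$, forcing $q = 0$ and hence $M \otimes_A H = 0$; that is, $\lambda^{(n)}(p) = 0$. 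Faithfulness then closes the argument: $\lambda^{(n)}(p) = 0$ forces every entry $p_{ij} \in \ker \lambda = 0$, so $p = 0$, whence $M = 0$ and $x = \operatorname{rank}(M) = 0$.

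The step I expect to be the main obstacle is this finiteness cancellation, namely justifying that a finite rank $B$\=/module of zero rank must be the zero module. This is where stable finiteness is genuinely used (the conclusion fails for algebras like the Cuntz algebras, where nonzero projections can carry trivial $K_0$\=/class), and it requires care both with the standard identification of finite rank Hilbert modules over a unital algebra with finitely generated projective modules and with the Murray--von Neumann finiteness argument used to upgrade $[q] = 0$ to $q = 0$.
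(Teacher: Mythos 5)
Your proposal is correct and follows essentially the same route as the paper: realize $x$ as the rank of a finite rank Hilbert $A$\=/module $M$, observe that $\operatorname{rank}(M \otimes_A H) = [H](x) = 0$, use stable finiteness of $B$ to conclude $M \otimes_A H = 0$, and then use faithfulness of the left action to force $M = 0$. The only cosmetic difference is in the last step, where the paper shows that any nonzero $\xi \in M$ yields a nonzero simple tensor $\xi \otimes \eta$ (and cites Gipson for the cancellation), while you work with the explicit projection picture $M = pA^n$, run the Murray--von Neumann finiteness argument by hand, and conclude $\lambda^{(n)}(p) = 0$ directly; both are fine.
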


\begin{proof}
Suppose $x \in K_0(A)$ is positive and $[H](x) = 0$.  Then there is a finitely generated Hilbert $A$\=/module $M$ such that $\operatorname{rank}(M) = x$.  Now, $M \otimes_A H$ is a finitely generated Hilbert $B$\=/module and $\operatorname{rank}(M \otimes_A H) = [H](x) = 0$.  Viewing $[H](x)$ as an element of $K_0(\tilde{B})$ where $\tilde{B}$ is the unitization of $B$, there is an integer $n$ such that $(M \otimes_A H) \oplus \tilde{B}^n \cong \tilde{B}^n$.  Since $\tilde{B}$ is stably finite, we have that $M \otimes_A H = 0$ (see Theorem 6.4 of \cite{Gipson}).

We claim $M = 0$.  Suppose to the contrary, there is a non--zero $\xi \in H$.  Then $\langle \xi, \xi \rangle \neq 0$ and since $H$ is faithful, there is an $\eta \in H$ such that $\langle \xi, \xi \rangle^{1/2} \eta \neq 0$.  But now,
\[ \langle \xi \otimes \eta, \xi \otimes \eta \rangle = \langle \eta, \langle \xi, \xi \rangle \eta \rangle = \langle \langle \xi, \xi \rangle^{1/2} \eta, \langle \xi, \xi \rangle^{1/2} \eta \rangle \neq 0. \]
Therefore, $\xi \otimes \eta$ is non--zero which contradicts the fact that $M \otimes_A H = 0$.
\end{proof}

\section{The Crossed Product by the Gauge Action}\label{sec:SkewProduct}

We apply the work of Hao and Ng described in Section \ref{sec:CrossedProductCorrespondences} to the gauge action on Cuntz--Pimsner algebras.  In particular, when $H$ is row--finite and faithful, we will give an inductive limit decomposition of the crossed product $\mathcal{O}_A(H) \rtimes_\gamma \mathbb{T}$.  Combining this with the continuity of $K$\=/theory, we will calculate the $K$\=/theory of this crossed product and hence prove Theorem A.

Suppose $H$ is a $C^*$\=/correspondence over a $C^*$-algebra $A$.  Define actions
\[ \alpha : \mathbb{T} \rightarrow \operatorname{Aut}(A) \qquad \text{and} \qquad \beta : \mathbb{T} \rightarrow \operatorname{Aut}(H) \]
by $\alpha_z(a) = a$ and $\alpha_z(\xi) = z \xi$.  It is easily verified that $(\alpha, \beta)$ is a correspondence action of $\mathbb{T}$ on $(A, H)$.  Our first goal is to give a concrete description of the correspondence $(A \rtimes_\alpha \mathbb{T}, H \rtimes_\beta \mathbb{T})$.

Let $A^\infty = C_0(\mathbb{Z}, A)$ and $H^\infty = C_0(\mathbb{Z}, A)$.  Then $H^\infty$ is a $C^*$\=/correspondence over $A^\infty$ with
\[ (a \xi)(n) = a(n - 1) \xi(n), \qquad (\xi a)(n) = \xi(n) a(n), \qquad \langle \xi, \eta \rangle(n) = \langle \xi(n), \eta(n) \rangle, \]
for every $a \in A^\infty$, $\xi, \eta \in H^\infty$, and $n \in \mathbb{Z}$.  Define
\[ f : C(\mathbb{T}, A) \rightarrow A^\infty \qquad \text{and} \qquad g : C(\mathbb{T}, H) \rightarrow H^\infty \]
by
\[ f(a)(n) = \int_\mathbb{T} z^{-n} a(z) \, dz \qquad \text{and} \qquad g(\xi)(n) = \int_\mathbb{T} z^{-n} \xi(z) \, dz \]
It is clear that $f$ extends to an isomorphism from $A \rtimes_\alpha \mathbb{Z}$ onto $A^\infty$.  Moreover, since
\begin{align*}
      \langle g(\xi), g(\eta) \rangle(n)
   &= \langle g(\xi)(n), g(\eta)(n) \rangle \\
   &= \left\langle \int_{\mathbb{T}} z^{-n} \xi(z) \, dz, \int_{\mathbb{T}} w^{-n} \eta(w) \, dw \right\rangle \\
   &= \int_{\mathbb{T}} \int_{\mathbb{T}} z^n w^{-n} \langle \xi(z), \eta(w) \rangle \, dw \, dz \\
   &= \int_{\mathbb{T}} \int_{\mathbb{T}} w^{-n} \langle \xi(z), \eta(zw) \rangle \, dw \, dz \\
   &= \int_{\mathbb{T}} w^{-n} \langle \xi, \eta \rangle(w) \, dw \\
   &= f(\langle \xi, \eta \rangle)(n)
\end{align*}
for every $\xi, \eta \in C(\mathbb{T}, H)$ and $n \in \mathbb{Z}$, the map $g$ is isometric and hence extends by continuity to an isometric map $g$ from $H \rtimes_\beta \mathbb{Z}$ onto $H^\infty$.  A routine calculation shows that $f(a) g(\xi) = g(a \xi)$ and $g(\xi) f(a) = g(\xi a)$ for every $a \in C(\mathbb{T}, A)$ and $\xi \in C(\mathbb{T}, H)$.  Hence the correspondences $(A^\infty, H^\infty)$ and $(A \rtimes_\alpha \mathbb{T}, H \rtimes_\beta \mathbb{T})$ are isomorphic.

It is clear from the construction above that the correspondence action $(\alpha, \beta)$ induces the usual gauge action $\gamma$ of $\mathbb{T}$ on $\mathcal{O}_A(H)$; that is, $\gamma(\pi(a)) = \pi(a)$ and $\gamma(\tau(\xi)) = z \tau(\xi)$ for every $a \in A$ and $\xi \in H$.  By Theorem \ref{thm:HaoNg} and by the calculation above, there are isomorphisms
\[ \mathcal{O}_{A^\infty}(H^\infty) \cong \mathcal{O}_{A \rtimes_\alpha \mathbb{T}}(H \rtimes_\beta \mathbb{T}) \cong \mathcal{O}_A(H) \rtimes_\gamma \mathbb{T}.\]
We will describe the automorphism of $\mathcal{O}_{A^\infty}(H^\infty)$ induced by the automorphism $\hat{\gamma}$ of $\mathcal{O}_A(H) \rtimes_\gamma \mathbb{T}$ through these isomorphisms.

Let $(\hat{\alpha}, \hat{\beta})$ be the automorphism of the correspondence $(A \rtimes_\alpha \mathbb{T}, H \rtimes_\alpha \mathbb{T})$ given by $\hat{\alpha}(a)(z) = z^{-1} a(z)$ and $\hat{\beta}(\xi)(z) = z^{-1} \xi(z)$ for every $a \in C(\mathbb{T}, A)$ and $\xi \in C(\mathbb{T}, H)$.
We denote by $\sigma$ the ``left shift'' operators $\sigma : A^\infty \rightarrow A^\infty$ and $\sigma : H^\infty \rightarrow H^\infty$ given by $\sigma(x)(n) = x(n + 1)$ for every $x \in A \cup H$.  These shift operators induce an automorphism $\sigma$ of $\mathcal{O}_{A^\infty}(H^\infty)$.  Note that for each $a \in C_c(G, A)$ and $s \in G$, we have
\[ f(\hat{\alpha}(a))(n) = \int_{\mathbb{T}} z^{-n} \hat{\alpha}(a)(z) \, dz = \int_{\mathbb{T}} z^{-n - 1} a(z) \, dz = f(a)(n + 1) = \sigma(f(a))(n). \]
Hence $f \circ \hat{\alpha} = \sigma \circ f$ and similarly  $g \circ \hat{\beta} = \sigma \circ g$.

Combining the above calculation with Proposition \ref{prop:equivariant}, we have proven the following result.

\begin{proposition}\label{prop:SkewProduct}
Suppose $A$ is a $C^*$-algebra, $H$ is a $C^*$\=/correspondence over $A$, and let $\gamma$ denote the standard gauge action of $\mathbb{T}$ on $\mathcal{O}_A(H)$.  Then there is an isomorphism
\[ \mathcal{O}_A(H) \rtimes_\gamma \mathbb{T} \cong \mathcal{O}_{A^\infty}(H^\infty) \]
that intertwines the automorphisms $\hat{\gamma}$ of $\mathcal{O}_A(H) \rtimes_\gamma \mathbb{T}$ and $\sigma$ of $\mathcal{O}_{A^\infty}(H^\infty)$.
\end{proposition}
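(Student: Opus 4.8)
The plan is to assemble the desired isomorphism from the three ingredients developed above: the explicit correspondence isomorphism $(f, g)$, the functoriality of the Cuntz--Pimsner construction, and the Hao--Ng isomorphism of Theorem \ref{thm:HaoNg}. First I would record that the Cuntz--Pimsner construction is functorial for isomorphisms of correspondences. Indeed, given an isomorphism $(A_1, H_1) \cong (A_2, H_2)$, composing it with the universal covariant representation of $(A_2, H_2)$ yields a covariant Toeplitz representation of $(A_1, H_1)$ whose first coordinate is injective and which admits a gauge action; Theorem \ref{thm:GIUT} then gives an injective $*$\=/homomorphism $\mathcal{O}_{A_1}(H_1) \to \mathcal{O}_{A_2}(H_2)$, and the symmetric argument produces an inverse. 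Applying this to the isomorphism $(f, g) : (A \rtimes_\alpha \mathbb{T}, H \rtimes_\beta \mathbb{T}) \to (A^\infty, H^\infty)$ constructed above gives $\mathcal{O}_{A \rtimes_\alpha \mathbb{T}}(H \rtimes_\beta \mathbb{T}) \cong \mathcal{O}_{A^\infty}(H^\infty)$.

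Since $\mathbb{T}$ is compact and hence amenable, Theorem \ref{thm:HaoNg} supplies an isomorphism $\mathcal{O}_{A \rtimes_\alpha \mathbb{T}}(H \rtimes_\beta \mathbb{T}) \cong \mathcal{O}_A(H) \rtimes_\gamma \mathbb{T}$, and composing it with the previous one yields the stated isomorphism $\mathcal{O}_A(H) \rtimes_\gamma \mathbb{T} \cong \mathcal{O}_{A^\infty}(H^\infty)$. To obtain the intertwining, I would track the generator of the dual action through both isomorphisms. By Proposition \ref{prop:equivariant} the Hao--Ng isomorphism is $\hat{\mathbb{T}}$\=/equivariant, so it carries the automorphism of $\mathcal{O}_{A \rtimes_\alpha \mathbb{T}}(H \rtimes_\beta \mathbb{T})$ induced by the correspondence automorphism $(\hat{\alpha}, \hat{\beta})$ to the dual automorphism $\hat{\gamma}$. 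On the other side, the relations $f \circ \hat{\alpha} = \sigma \circ f$ and $g \circ \hat{\beta} = \sigma \circ g$ verified above say exactly that $(f, g)$ intertwines $(\hat{\alpha}, \hat{\beta})$ with the shift on $(A^\infty, H^\infty)$, so by functoriality the induced isomorphism carries that same automorphism to $\sigma$. Chaining the two identifications shows the composite intertwines $\hat{\gamma}$ and $\sigma$.

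The point that I expect to require the most care is purely bookkeeping at the level of $\hat{\mathbb{T}} \cong \mathbb{Z}$: the symbols $\hat{\gamma}$ and $\sigma$ in the statement denote single generators of $\mathbb{Z}$\=/actions, and one must check that they are matched to the same character, namely the one $z \mapsto z$ singled out by the normalizations $\hat{\alpha}(a)(z) = z^{-1} a(z)$ and $\hat{\gamma}_\chi(x)(s) = \chi(s)^{-1} x(s)$. Once this choice of generator is pinned down, the equivariance of Proposition \ref{prop:equivariant} and the intertwining relations for $(f, g)$ combine formally, with no further analytic estimates needed.
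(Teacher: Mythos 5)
Your proposal is correct and follows essentially the same route as the paper: the isomorphism $(f,g) : (A \rtimes_\alpha \mathbb{T}, H \rtimes_\beta \mathbb{T}) \to (A^\infty, H^\infty)$ of correspondences, the Hao--Ng isomorphism for the amenable group $\mathbb{T}$, and the equivariance statement of Proposition \ref{prop:equivariant} combined with the relations $f \circ \hat{\alpha} = \sigma \circ f$ and $g \circ \hat{\beta} = \sigma \circ g$. The only content you add beyond the paper's (largely implicit) argument is the explicit justification, via Theorem \ref{thm:GIUT}, that an isomorphism of correspondences induces an isomorphism of Cuntz--Pimsner algebras, which is a correct and welcome piece of bookkeeping.
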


We can use the above proposition to give an inductive limit decomposition of the crossed product $\mathcal{O}_A(H) \rtimes_\gamma \mathbb{T}$.  Given $n \in \mathbb{Z}$, define a subcorrespondence $(A^n, H^n)$ of $(A^\infty, H^\infty)$ by
\begin{align*}
   A^n &= \{ a \in A^\infty : a(k) = 0 \text{ for } k > n \}
\intertext{and}
   H^n &= \{ \xi \in H^\infty : \xi(k) = 0 \text{ for } k > n \}.
\end{align*}
Note that the automorphism $\sigma$ of $(A^\infty, H^\infty)$ restricts to an endomorphism $\sigma$ of $(A^n, H^n)$.  The following proposition is easily verified.

\begin{proposition}\label{prop:InductiveLimit}
For $n < m < \infty$, the inclusions
\[ A^n \hookrightarrow A^m \hookrightarrow A^\infty \qquad \text{and} \qquad H^n \hookrightarrow H^m \hookrightarrow H^\infty \] induce embeddings
\[ \begin{tikzcd} \mathcal{O}_{A^n}(H^n) \arrow[hook]{r}{\iota_{n, m}} & \mathcal{O}_{A^m}(H^m) \arrow[hook]{r}{\iota{m, \infty}} & \mathcal{O}_{A^\infty}(H^\infty) \end{tikzcd}. \]
Moreover, the maps $\iota_{m, \infty}$ induce an isomorphism
\[ \underset{\longrightarrow}{\lim} \, (\mathcal{O}_{A^n}(H^n), \iota_{n, m}) \rightarrow \mathcal{O}_{A^\infty}(H^\infty). \]
\end{proposition}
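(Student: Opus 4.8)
The plan is to realize each connecting map by restricting the universal covariant representation of the larger correspondence, and then to identify the inductive limit with $\mathcal{O}_{A^\infty}(H^\infty)$ by a density argument. First I would record two structural facts. Since multiplication and inner products are computed fibrewise, $A^n$ is an ideal of $A^m$ (and of $A^\infty$), and $H^n$ sits inside $H^m$ as an \emph{orthogonally complemented} submodule: truncation to the coordinates $k \le n$ is a self-adjoint projection $p_n \in \mathbb{B}(H^m)$ with range $H^n$, because sequences with disjoint supports are orthogonal. Consequently $\mathbb{K}(H^n)$ is identified with the corner $p_n \mathbb{K}(H^m) p_n$, the operator $\theta_{\xi, \eta}$ on $H^n$ corresponding to $\theta_{\xi, \eta}$ on $H^m$ for $\xi, \eta \in H^n$.

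To build $\iota_{n,m}$, let $(\pi_m, \tau_m) : (A^m, H^m) \to \mathcal{O}_{A^m}(H^m)$ be the universal covariant representation and set $\pi = \pi_m|_{A^n}$ and $\tau = \tau_m|_{H^n}$. The Toeplitz relations for $(\pi, \tau)$ are immediate, and the induced map on compacts from Proposition \ref{prop:InducedMapOnCompacts} is exactly $\varphi_m$ restricted to $\mathbb{K}(H^n) = p_n \mathbb{K}(H^m) p_n$. What remains, and this is the crux, is covariance. I expect this to be the only genuinely delicate point, because of the one-step shift in the left action $(\lambda(a)\xi)(k) = a(k-1)\xi(k)$: for $a \in A^n$ the operator $\lambda_m(a)$ carries a spurious component on the fibre $k = n+1$ coming from $a(n)$, which $\lambda_n(a)$ never sees, so naively $\lambda_m(a) \neq \lambda_n(a)$ under the corner identification. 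The saving observation is a direct computation of $\ker \lambda_n$: one finds $\ker\lambda_n = \{a \in A^n : a(j) \in \ker \lambda_H \text{ for } j < n\}$, where $\lambda_H$ is the left action of $A$ on $H$, and hence $(\ker \lambda_n)^\perp$ consists of those $a$ with $a(n) = 0$ and $a(j) \in (\ker\lambda_H)^\perp$ for $j < n$. Since $J_{H^n} \subseteq (\ker\lambda_n)^\perp$, every $a \in J_{H^n}$ has $a(n) = 0$; the spurious component vanishes and $\lambda_m(a) = \lambda_n(a)$ under $p_n$. The same computation shows $a \in (\ker\lambda_m)^\perp$ and $\lambda_m(a) \in \mathbb{K}(H^m)$, so $a \in J_{H^m}$ and covariance of $(\pi_m, \tau_m)$ transfers. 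The universal property then yields $\iota_{n,m}$, and $\iota_{m,\infty}$ is constructed identically with $(A^\infty, H^\infty)$ in place of the larger finite stage (here $\ker\lambda_\infty = \{a : a(j) \in \ker\lambda_H \text{ for all } j\}$, and the argument goes through unchanged).

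For injectivity I would invoke the Gauge Invariant Uniqueness Theorem (Theorem \ref{thm:GIUT}): the left action of the universal covariant representation is injective, hence so is its restriction to $A^n$, and the gauge action on $\mathcal{O}_{A^m}(H^m)$ restricts on the image to the gauge action associated with $(A^n, H^n)$, since $\iota_{n,m}$ sends $\pi_n, \tau_n$ to $\pi_m, \tau_m$ and thus respects the grading. Therefore each $\iota_{n,m}$ and each $\iota_{m,\infty}$ is an embedding, and $\iota_{m,\infty} \circ \iota_{n,m} = \iota_{n,\infty}$ because all three maps agree on the generators $\pi(A^n)$ and $\tau(H^n)$.

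Finally, the compatible family $\{\iota_{m,\infty}\}$ induces a map $\iota : \underset{\longrightarrow}{\lim}\,(\mathcal{O}_{A^n}(H^n), \iota_{n,m}) \to \mathcal{O}_{A^\infty}(H^\infty)$. As the connecting maps are isometric, $\iota$ is isometric, hence injective. For surjectivity, $\bigcup_n A^n$ and $\bigcup_n H^n$ are dense in $A^\infty = C_0(\mathbb{Z}, A)$ and $H^\infty = C_0(\mathbb{Z}, H)$ respectively, by truncating and using the $C_0$ condition. Thus the image of $\iota$, a $C^*$-subalgebra containing $\pi_\infty(A^n)$ and $\tau_\infty(H^n)$ for every $n$, contains $\pi_\infty(A^\infty)$ and $\tau_\infty(H^\infty)$ by continuity; since these generate $\mathcal{O}_{A^\infty}(H^\infty)$, the map $\iota$ is onto and hence an isomorphism.
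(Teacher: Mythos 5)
Your proof is correct, and it is worth noting that the paper offers no argument here at all --- the proposition is dismissed with ``easily verified,'' so there is nothing to compare against except the surrounding text. You have correctly isolated the one point that is \emph{not} routine: because the left action is shifted, $(\lambda(a)\xi)(k) = a(k-1)\xi(k)$, the operator $\lambda^m(a)$ for $a \in A^n$ genuinely differs from the zero\-/extension of $\lambda^n(a)$ by the component $a(n)\xi(n+1)$ on the fibre $n+1$, and covariance of the restricted representation would fail if one demanded it on all of $\lambda^{-1}(\mathbb{K}(H^n))$; the rescue is exactly your computation that $(\ker\lambda^n)^\perp$ forces $a(n)=0$, so the discrepancy vanishes on the Katsura ideal $J_{H^n}$. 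This is consistent with the computation of $\ker\lambda^n$ that the paper does carry out inside the proof of Proposition \ref{prop:FockBimodule} (there under the standing assumption that $H$ is faithful, where it yields $J^n = A^{n-1}$); your version handles general $H$, which is what the proposition as stated requires. The remaining steps --- the corner identification $\mathbb{K}(H^n) \cong p_n\mathbb{K}(H^m)p_n$ via the truncation projection, injectivity from the Gauge Invariant Uniqueness Theorem using the ambient gauge action, and surjectivity of the limit map from density of $\bigcup_n A^n$ and $\bigcup_n H^n$ --- are all sound.
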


We will give a more explicit description of the algebras $\mathcal{O}_{A^n}(H^n)$ in the case where $H$ is row--finite and faithful.  In particular, there is a faithful representation of $\mathcal{O}_{A^n}(H^n)$ on the Fock space $\mathcal{F}_A(H)$.  Using this representation, we will show each $\mathcal{O}_{A^n}(H^n)$ is Morita equivalent to $A$.  This Morita equivalence will play a key role in the proof of Theorem A.

\begin{proposition}\label{prop:FockBimodule}
Suppose $H$ is a regular $C^*$\=/correspondence over a $C^*$\=/algebra $A$.  There is a canonical isomorphism $\mathcal{O}_{A^n}(H^n) \rightarrow \mathbb{K}(\mathcal{F}_A(H))$.  In particular, $\mathcal{F}_A(H)$ is an $\mathcal{O}_{A^n}(H^n)$--$A$ imprimitivity bimodule and $\mathcal{O}_{A^n}(H^n)$ is Morita equivalent to $A$.
\end{proposition}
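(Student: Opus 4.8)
The plan is to realize $\mathcal{O}_{A^n}(H^n)$ explicitly as $\mathbb{K}(\mathcal{F}_A(H))$ by building a concrete covariant representation on the Fock module and invoking the Gauge Invariant Uniqueness Theorem (Theorem \ref{thm:GIUT}). First I would record the structure of $(A^n, H^n)$ in terms of the values at each position $k \le n$: the left action of the position-$k$ copy of $A$ carries the position-$(k+1)$ copy of $H$ into itself via $\lambda$, while the right action and inner product are diagonal in the position and the top copy at position $n$ acts as $0$. A short computation then gives $\ker \lambda^n = A_n$, $(\ker \lambda^n)^\perp = A^{n-1}$, and $\lambda^n(A^n) \subseteq \mathbb{K}(H^n)$, so that $J_{H^n} = A^{n-1}$.

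Reindexing position $k$ by level $j = n - k \ge 0$, I would define $(\pi_0, \tau_0) : (A^n, H^n) \to \mathbb{B}(\mathcal{F}_A(H))$ on $\mathcal{F}_A(H) = \bigoplus_{j \ge 0} H^{\otimes j}$ by letting $\pi_0(a)$ act block-diagonally, on level $j$ as the left action of $a(n-j)$ on $H^{\otimes j}$, and letting $\tau_0(\xi)$ act as the single-block creation $L_{\xi(n-j)} : \rho \mapsto \xi(n-j) \otimes \rho$ from level $j$ to level $j+1$. The crux—and the main obstacle—is that, although the full creation operator on $\mathcal{F}_A(H)$ is never compact, each single-block creation $L_\zeta : H^{\otimes j} \to H^{\otimes (j+1)}$ \emph{is} compact: one has $L_\zeta^* L_\zeta = \lambda^{(j)}(\langle \zeta, \zeta \rangle)$, and since $T^*T$ compact forces $T$ compact, this reduces to showing that the left action $\lambda^{(j)}$ of $A$ on each $H^{\otimes j}$ is by compacts. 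I would prove this, together with faithfulness of $\lambda^{(j)}$, by induction on $j$ using regularity of $H$: compactness propagates because $\theta_{\zeta,\omega} \otimes 1 = L_\zeta L_\omega^*$ with $L_\zeta^* L_\zeta = \lambda^{(j)}(\langle \zeta, \zeta \rangle)$ already compact, and faithfulness propagates because $b\,H^{\otimes(j+1)} = 0$ with $b \ne 0$ would force $\langle b\eta, b\eta \rangle^{1/2}$ to act as zero on $H^{\otimes j}$, contradicting the inductive faithfulness. Since $a$ and $\xi$ lie in $C_0$, the block norms tend to $0$ and $\pi_0(a), \tau_0(\xi)$ genuinely land in $\mathbb{K}(\mathcal{F}_A(H))$.

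Next I would verify the hypotheses of the Gauge Invariant Uniqueness Theorem. A level-by-level computation shows $\tau_0(\xi)^* \tau_0(\eta) = \pi_0(\langle \xi, \eta \rangle)$ and $\pi_0(a)\tau_0(\xi) = \tau_0(a\xi)$, and, comparing with Proposition \ref{prop:InducedMapOnCompacts}, that covariance holds on $J_{H^n} = A^{n-1}$; here the vanishing of the level-$0$ action of $a(n)$ for $a \in A^{n-1}$ is precisely what makes covariance work, while on levels $\ge 1$ it matches through the identity $L_\zeta L_\omega^* = \theta_{\zeta,\omega} \otimes 1$. Injectivity of $\pi_0$ follows from the faithfulness of each $\lambda^{(j)}$. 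A gauge action is supplied by $\operatorname{Ad}(U_z)$ with $U_z = \bigoplus_j z^j\, 1_{H^{\otimes j}}$, since conjugation fixes the block-diagonal $\pi_0(a)$ and scales the level-raising $\tau_0(\xi)$ by $z$. The theorem then yields an injective $*$-homomorphism $\Phi : \mathcal{O}_{A^n}(H^n) \to \mathbb{K}(\mathcal{F}_A(H))$.

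Finally I would prove surjectivity and deduce the Morita equivalence. Restricting $\pi_0$ and $\tau_0$ to elements supported at a single position produces, for every level $j$ and every $\xi_0 \in H$, the single-block operators $\lambda^{(j)}(a)$ and $L_{\xi_0} : H^{\otimes j} \to H^{\otimes(j+1)}$ inside the image; composing creation operators gives $L_X : A \to H^{\otimes i}$ for every elementary tensor $X$, whence $L_X L_Y^* = \theta_{X,Y}$ lies in the image for all elementary tensors $X, Y$. As these span $\mathbb{K}(\mathcal{F}_A(H))$ densely, $\Phi$ is onto, giving the canonical isomorphism $\mathcal{O}_{A^n}(H^n) \cong \mathbb{K}(\mathcal{F}_A(H))$; the data for consecutive $n$ are intertwined by the shift $\sigma$, which is why the Fock space is independent of $n$. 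For the last assertion, $\mathcal{F}_A(H)$ is full as a right Hilbert $A$-module because $\langle A, A \rangle = A$ already appears at level $0$, so $\mathcal{F}_A(H)$ is a $\mathbb{K}(\mathcal{F}_A(H))$--$A$ imprimitivity bimodule and transports across $\Phi$ to an $\mathcal{O}_{A^n}(H^n)$--$A$ imprimitivity bimodule, exhibiting the Morita equivalence with $A$.
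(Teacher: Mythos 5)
Your proposal is correct and follows essentially the same route as the paper's proof: the same block\-/diagonal/creation\-/operator representation on $\mathcal{F}_A(H)$ (with the same reindexing $j = n-k$), the same computation $J_{H^n} = A^{n-1}$, covariance, the Gauge Invariant Uniqueness Theorem for injectivity, and surjectivity via products of creation operators realizing $\theta_{\xi,\eta}$ for elementary tensors. The only differences are cosmetic: you justify compactness of the single\-/block creation operators via the fact that $T^*T$ compact forces $T$ compact together with an induction showing each $\lambda^{(j)}$ acts by compacts, where the paper uses an approximate\-/identity argument and a terse $\lambda(a)\otimes\operatorname{id}$ claim, and you exhibit the gauge action $\operatorname{Ad}(U_z)$ explicitly where the paper leaves it implicit.
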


\begin{proof}
Let $H$ denote a row--finite, faithful $C^*$\=/correspondence over $A$.  For $n \in \mathbb{Z}$ and $k \geq 0$, define $\pi^n_k : A^n \rightarrow \mathbb{B}(H^{\otimes k})$ and $\tau^n_k : H^n \rightarrow \mathbb{B}(H^{\otimes k}, H^{\otimes (k+1)})$ by
\begin{align*}
   \pi^n_k(a)(\eta_1 \otimes \cdots \otimes \eta_k) &= a(n - k) \eta_1 \otimes \cdots \otimes \eta_k \\
\intertext{and}
   \tau^n_k(\xi)(\eta_1 \otimes \cdots \otimes \eta_k) &= \xi(n - k) \otimes \eta_1 \otimes \cdots \otimes \eta_k.
\end{align*}

We identify $H^{\otimes k}$ with $H \otimes_A H^{\otimes (k - 1)}$, $\pi^n_k(a) = \lambda(a(n-k)) \otimes \operatorname{id}$.  Since $H$ is row--finite, $\lambda(a(n-k))$ is compact and hence $\pi^n_k(a)$ is compact. If $e_i$ is an approximate identity in $A$, let $a_i \in A^n$ be the element given by $a_i(n - k) = e_i$ and $a_i(n - \ell) = 0$ for $\ell \neq k$.  Then $\tau^n_k(\xi) \pi^n_k(a_i) = \tau^n_k(\xi e_i) \rightarrow \tau^n_k(\xi)$.  Since each $\pi^n_k(a_i)$ is compact, $\tau^n_k(\xi)$ is compact.

Since $H$ is faithful, each $H^{\otimes (k - 1)}$ is faithful, and
\[ \|\pi^n_k(a)\| = \| \lambda(a(n - k)) \otimes \operatorname{id} \| = \| \lambda(a(n-k)) \| = \|a(n - k)\| \]
Moreover, for every $\eta \in H^{\otimes k}$,
\begin{align*}
     \| \tau^n_k(\xi)(\eta) \|^2
  &= \| \langle \xi(n -k) \otimes \eta, \xi(n-k) \otimes \eta \rangle \| \\
  &= \| \langle \eta, \langle \xi(n-k), \xi(n-k) \rangle \eta \rangle \| \\
  &= \| \langle \langle \xi(n-k), \xi(n-k) \rangle^{1/2} \eta, \langle \xi(n-k), \xi(n-k) \rangle^{1/2} \eta \rangle \| \\
  &= \| \langle \xi, \xi \rangle(n-k)^{1/2} \eta \|^2 \\
  &= \| \pi^n_k(\langle \xi, \xi \rangle)^{1/2} (\eta) \|^2
\end{align*}
Now,
\[ \| \tau^n_k(\xi) \| = \| \pi^n_k(\langle \xi, \xi \rangle) \|^{1/2} = \| \langle \xi, \xi \rangle(n -k) \|^{1/2} = \|\xi(n-k)\|. \]
Since $\| \pi^n_k(a) \| \rightarrow 0$ and $\| \tau^n_k(\xi) \| \rightarrow 0$ as $k \rightarrow \infty$ for each $a \in A^n$ and $\xi \in H^n$, we may define
\[ \pi^n(a) = \bigoplus_k \pi^n_k(a) \in \mathbb{K}(\mathcal{F}_A(H)) \qquad \text{and} \qquad \tau^n(\xi) = \bigoplus_k \tau^n_k(\xi) \in \mathbb{K}(\mathcal{F}_A(H)) \]
for all $a \in A^n$ and $\xi \in H^n$.

We claim $(\pi^n, \tau^n)$ is a covariant Toeplitz representation of $(A^n, H^n)$ on the $C^*$\=/algebra $\mathbb{K}(\mathcal{F}_A(H))$.  It is clear that $\pi^n$ is a *\=/homomorphism and $\tau^n$ is linear.  Note that if $\xi \in H^n$, then
\[ \tau^n(\xi)^*(\eta_1 \otimes \cdots \otimes \eta_k) = \langle \xi(n - k + 1), \eta_1 \rangle \eta_2 \otimes \cdots \otimes \eta_k \]
and $\tau^n(\xi)^*(a) = 0$ for every $a \in A$.  Now routine calculations show that
\[ \tau^n(\xi)^* \tau^n(\eta) = \pi^n(\langle \xi, \eta \rangle) \]
and
\[ \pi^n(a) \tau^n(\xi) = \tau^n(a \xi), \]
and hence $(\pi^n, \tau^n)$ is a Toeplitz representation.

It remains to show the representation $(\pi^n, \tau^n)$ is covariant.  Let $J^n$ denote the Katsura ideal of $(A^n, H^n)$.  Since $H$ is row-finite, so is $H^n$.  Thus $J^n = \ker( \lambda^n )^\perp$, where $\lambda^n$ is the left multiplication operator $A^n \rightarrow \mathbb{B}(H^n)$.  We claim
\[ \ker \lambda^n = \{ a \in A^n : a(n - k) = 0 \text{ for every } k \geq 1 \}. \]
It is clear that if $a(n - k) = 0$ for every $k \geq 1$, then $\lambda^n(a) = 0$.  Suppose $k \geq 1$ such that $a(n - k) \neq 0$.  Since $H$ is faithful, there is a $\xi \in H$ such that $a(n - k) \xi \neq 0$.  Choose $\xi' \in H^\infty$ such that $\xi'(n - k + 1) = \xi$ and note that
\[ (a \xi')(n - k + 1) = a(n - k) \xi'(n - k + 1) = a(n - k) \xi \neq 0. \]
Thus $\lambda^n(a) \neq 0$ and the claim holds.  Thus $J^n = A^{n-1} \subseteq A^n$.

Let $\varphi^n: \mathbb{K}(H^n) \rightarrow \mathbb{K}(\mathcal{F}_A(H))$ be given by $\varphi^n(\theta_{\xi, \eta}) = \tau^n(\xi) \tau^n(\eta)^*$.  Then note that
\begin{align*}
     \varphi^n(\theta_{\xi, \eta})(\zeta_1 \otimes \cdots \otimes \zeta_k)
  &= \tau^n(\xi) \tau^n(\eta)^*(\zeta_1 \otimes \cdots \otimes \zeta_k) \\
  &= \tau^n(\xi) (\langle \eta(n - k + 1), \zeta_1 \rangle \zeta_2 \otimes \zeta_k) \\
  &= \xi(n - k + 1) \otimes \langle \eta(n - k + 1), \zeta_1 \rangle \zeta_2 \otimes \zeta_k \\
  &= \xi(n - k + 1) \langle \eta(n - k + 1), \zeta_1 \rangle \otimes \zeta_2 \otimes \cdots \otimes \zeta_k \\
  &= \theta_{\xi(n - k +1), \eta(n - k + 1)}(\zeta_1) \otimes \zeta_2 \otimes \cdots \otimes \zeta_k.
\end{align*}
Now it is easy to verify $\varphi^n(\lambda(a)) = \pi^n(a)$ whenever $a \in J^n$.

Now $(\pi^n, \tau^n)$ is a covariant Toeplitz representation, and the induced *\=/homomorphism $\psi^n : \mathcal{O}_{A^n}(H^n) \rightarrow \mathbb{K}(\mathcal{F}_A(H))$ is injective by the gauge invariance uniqueness theorem.  It only remains to show $\psi^n$ is surjective.  However, this is routine.  For instance, if $\xi = \xi_1 \otimes \cdots \otimes \xi_k$ and $\eta = \eta_1 \otimes \cdots \otimes \eta_\ell$, then
\[ \theta_{\xi, \eta} = \tau^n(j_{n-k+1}(\xi_1)) \cdots \tau^n(j_n(\xi_k)) \tau^n(j_n(\eta_\ell))^* \cdots \tau^n(j_{n - \ell + 1}(\eta_1))^*,  \]
where the $j_i: H \rightarrow H^n$ are the inclusion maps into the $i$ coordinate.
\end{proof}

Using the previous result together with the continuity of $K$\=/theory and the Fredholm picture of $K$\=/theory outlined in Section \ref{sec:KTheory}, we can now prove Theorem A.

\begin{proof}[Proof of Theorem A]
We claim the diagram
\begin{center}
\begin{tikzcd}[row sep = large]
& A \arrow{rr}{H} \arrow{dd}[near start]{H} & & A \arrow{dd}{H} \\
\mathcal{O}_{A^n}(H^n) \arrow[crossing over]{rr}[near start]{\iota_n} \arrow{dd}[swap]{\sigma} \arrow{ur}[rotate = 37, pos = .8]{\mathcal{F}_A(H)} & & \mathcal{O}_{A^{n+1}}(H^{n+1}) \arrow{ur}[rotate = 35, pos = .8]{\mathcal{F}_A(H)} & \\
& A \arrow{rr}[near start]{H} & & A \\
\mathcal{O}_{A^n}(H^n) \arrow{ur}[rotate = 37, pos = .7]{\mathcal{F}_A(H)} \arrow{rr}{\iota_n} & & \mathcal{O}_{A^{n+1}}(H^{n+1}) \arrow{ur}[rotate = 35, pos = .7]{\mathcal{F}_A(H)} \arrow[crossing over, leftarrow]{uu}[near end]{\sigma} &
\end{tikzcd}
\end{center}
commutes on $K$-theory.  It is clear that the front and back faces of the cube commute on $K$-theory.  We will show the top and bottom faces commute.  The argument for the left and right faces is similar.

Let $L$ denote the tensor product across the top and right arrows; that is,
\[ L = \mathcal{O}_A^{n+1}(H^{n+1}) \otimes_{\psi^n} \mathcal{F}_A(H) \cong \mathcal{F}_A(H). \]
We will identify $L$ with $\mathcal{F}_A(H)$ and we note that $L$ is viewed a $\mathcal{O}_{A^n}(H^n)$--$A$ $C^*$\=/correspondence with the left action given by $x \cdot \xi = \iota_n(x)(\xi)$ for every $x \in \mathcal{O}_{A^n}(H^n)$ and $\xi \in L$.  Similarly, we define $K$ to be the correspondence $K = \mathcal{F}_A(H) \otimes_A H$.  To prove the claim we must show $[K] = [L]$.

Define $f : K \rightarrow L$ by $f(\xi \otimes \eta) = \xi \otimes \eta$ for every $\xi \in \mathcal{F}_A(H)$ and $\eta \in H$.  It is clear that $f$ is isometric and preserves the right module structure.  A tedious computation shows that $f$ preserves the left module structure.  For example,
\begin{align*}
   \pi^n(a) \cdot f(\zeta \otimes \zeta') &= \iota_n(\pi^n(a))(\zeta \otimes \zeta') = \pi^{n+1}(a)(\zeta \otimes \zeta') = a(n - k) \zeta \otimes \zeta' \\ &= (\pi^n(a) \zeta) \otimes \zeta' = f( \pi^n(a) \cdot (\zeta \otimes \zeta')).
\end{align*}
for $\zeta \in H^{\otimes k}$ and $\zeta' \in H$.

We have shown that $f$ is a correspondence isomorphism onto $f(K) \subseteq L$.  Therefore, $[K] = [f(K)]$.  To see that $[f(K)] = [L]$, it suffices to show $f(K) \supseteq \mathcal{O}_{A^n}(H^n) \cdot L$  by Proposition \ref{prop:NonDegKTheory}.  If $b \in A \subseteq L$, then for every $a \in A^n$ and $\xi \in H^n$, we have
\begin{align*}
  \pi^n(a) \cdot b &= \iota_n(\pi^n(a))(b) = \pi^{n+1}(a)(b) = a(n+1)(b) = 0, \\
  \tau^n(\xi) \cdot b &= \iota_n(\tau^n(\xi)) \tau^{n+1}(\xi)(b) = \xi(n+1)b = 0, \text{ and} \\
  \tau^n(\xi)^* \cdot b &= \iota_n(\tau^n(\xi))^*(b) = \tau^{n+1}(\xi)^*(b) = 0.
\end{align*}
It follows that
\[ \mathcal{O}_{A^n}(H^n) \cdot L \subseteq \bigoplus_{k=1}^\infty H^{\otimes k} \subseteq f(K). \]
This proves the claim.

By Propositions \ref{prop:InductiveLimit} and \ref{prop:SkewProduct},
\[ \mathcal{O}_A(H) \rtimes_\gamma \mathbb{T} \cong \underset{\longrightarrow}{\lim} \, (\mathcal{O}_{A^n}(H^n), \iota_n) \]
where $\iota_n : \mathcal{O}_{A^n}(H^n) \rightarrow \mathcal{O}_{A^{n+1}}(H^{n+1})$ is the map induced by the inclusions $A^n \hookrightarrow A^{n+1}$ and $H^n \hookrightarrow H^{n+1}$.  Moreover, the isomorphism intertwines the automorphisms $\hat{\gamma}$ of $\mathcal{O}_A(H) \rtimes_\gamma \mathbb{T}$ and the endomorphisms $\sigma$ of $\mathcal{O}_{A^n}(H^n)$ by Proposition \ref{prop:SkewProduct}.

By Proposition \ref{prop:CorrKTheory}, $H$ induces a group homomorphism $[H] : K_*(A) \rightarrow K_*(A)$ that is positive on $K_0$ groups.  Proposition \ref{prop:FockBimodule} shows $\mathcal{F}_A(H)$ is an $\mathcal{O}_{A^n}(H^n)$--$A$ imprimitivity bimodule.  Hence the map
\[ [\mathcal{F}_A(H)] : K_*(\mathcal{O}_{A^n}(H^n)) \rightarrow K_*(A) \]
is an isomorphism (see Theorem 5.3 in \cite{ExelFredholmOperators}).  Now the result follows from the continuity of $K$-theory and the commutativity of the diagram above.
\end{proof}

\begin{remark}
If $A$ is AF and $H$ is a row--finite, faithful $C^*$\=/correspondence over $A$, then Theorem B implies $\mathcal{O}_A(H) \rtimes_\gamma \mathbb{T}$ is AF.  Theorem A allows us to compute the ordered $K_0$ group of $\mathcal{O}_A(H) \rtimes_\gamma \mathbb{T}$ and hence, at least in principle, it is possible to determine $\mathcal{O}_A(H) \rtimes_\gamma \mathbb{T}$ up to Morita equivalence.
\end{remark}

The following example shows the utility of these results by giving a proof of the well--known fact that $\mathcal{O}_n \rtimes_\gamma \mathbb{T}$ is Morita equivalent to the UHF algebra of type $n^\infty$.

\begin{example}
Let $A = \mathbb{C}$ and $H = \mathbb{C}^n$ with $2 \leq n < \infty$.  Then $H$ is a $C^*$\=/correspondence over $A$ with the obvious operations and $\mathcal{O}_A(H)$ is the usual Cuntz algebra $\mathcal{O}_n$.  By Theorem B, $\mathcal{O}_n \rtimes_\gamma \mathbb{T}$ is an AF\=/algebra.  Identifying $K_0(\mathbb{C})$ with $\mathbb{Z}$, the map $[H] : \mathbb{Z} \rightarrow \mathbb{Z}$ is given by multiplication by $n$.  Hence by Theorem A,
\[ K_0(\mathcal{O}_A(H) \rtimes_\gamma \mathbb{T}) \cong \left\{ \frac{a}{n^k} : a \in \mathbb{Z}, \, k \geq 0 \right\} \subseteq \mathbb{Q} \]
with the usual order structure.  Hence $\mathcal{O}_A(H) \rtimes_\gamma \mathbb{T}$ is Morita equivalent to the UHF algebra of type $n^\infty$.
\end{example}

\section{The Approximately Finite--Dimensional Case}\label{sec:AFAlgebras}

Our goal is to prove Theorem B.  That is, if $A$ is AF, then so is $\mathcal{O}_A(H) \rtimes_\gamma \mathbb{T}$.  First consider a separable, row--finite, faithful $C^*$\=/correspondence $H$ over an AF\=/algebra $A$.  Then by Proposition \ref{prop:FockBimodule}, $\mathcal{O}_{A^n}(H^n)$ is Morita equivalent to $A$ and hence is AF.  Since inductive limits of AF\=/algebras are AF, Proposition \ref{prop:InductiveLimit} implies $\mathcal{O}_A(H) \rtimes_\gamma \mathbb{T}$ is also AF.

The general case will require more work.  We have already seen in Proposition \ref{prop:SkewProduct} that $\mathcal{O}_A(H) \rtimes_\gamma \mathbb{T} \cong \mathcal{O}_{A^\infty}(H^\infty)$.  We will show $\mathcal{O}_{A^\infty}(H^\infty)$ is AF by giving an alternate description of this algebra.  Some of our arguments will require $H$ to be non--degenerate.  Let us first show we can reduce to this case.

\begin{proposition}
The algebras $\mathcal{O}_A(H) \rtimes_\gamma \mathbb{T}$ and $\mathcal{O}_A(AH) \rtimes_\gamma \mathbb{T}$ are Morita equivalent.
\end{proposition}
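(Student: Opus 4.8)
The plan is to promote the Morita equivalence of Proposition~\ref{prop:NonDegenerate} to a $\mathbb{T}$\=/equivariant one and then pass to the crossed product. Recall from the proof of Proposition~\ref{prop:NonDegenerate} that, writing $(\pi,\tau)\colon (A,H)\to\mathcal{O}_A(H)$ for the universal covariant representation, the induced map $\psi$ identifies $\mathcal{O}_A(AH)$ with the full hereditary subalgebra $C:=\pi(A)\,\mathcal{O}_A(H)\,\pi(A)$ of $\mathcal{O}_A(H)$. So the entire task reduces to transporting this identification through the gauge action.

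First I would check that $C$ is invariant under the gauge action. Since $\gamma_z(\pi(a))=\pi(a)$ for all $a\in A$ and $z\in\mathbb{T}$, we have $\gamma_z(\pi(a)\,x\,\pi(b))=\pi(a)\,\gamma_z(x)\,\pi(b)\in C$, so $\gamma$ restricts to a point\=/norm continuous action of $\mathbb{T}$ on $C$. Moreover, under $\psi$ this restricted action is exactly the gauge action on $\mathcal{O}_A(AH)$: one has $\psi(\pi'(a))=\pi(a)$ and $\psi(\tau'(\xi))=\tau(\xi)$ for $a\in A$ and $\xi\in AH$, and comparing the defining relations $\gamma_z(\pi(a))=\pi(a)$ and $\gamma_z(\tau(\xi))=z\tau(\xi)$ on each side shows that $\psi$ intertwines the two gauge actions. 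Thus $(\mathcal{O}_A(AH),\gamma)$ and $(C,\gamma|_C)$ are isomorphic dynamical systems.

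Finally I would invoke the standard fact that a $G$\=/invariant full hereditary subalgebra $C\subseteq B$ induces a full hereditary subalgebra $C\rtimes G\subseteq B\rtimes G$; equivalently, that a $G$\=/equivariant Morita equivalence descends to a Morita equivalence of the crossed products (via the equivalence bimodule $\overline{C\,B}$ and its induced $G$\=/action). Applying this with $G=\mathbb{T}$, $B=\mathcal{O}_A(H)$, and the system isomorphism of the previous paragraph yields that $\mathcal{O}_A(AH)\rtimes_\gamma\mathbb{T}\cong C\rtimes_\gamma\mathbb{T}$ is a full hereditary subalgebra of $\mathcal{O}_A(H)\rtimes_\gamma\mathbb{T}$, so the two crossed products are Morita equivalent.

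The main obstacle is purely the descent step: one must ensure that the equivalence bimodule $\overline{C\,\mathcal{O}_A(H)}$ carries a compatible $\mathbb{T}$\=/action and that both fullness and the hereditary property survive passage to the crossed product. Because the gauge action fixes $\pi(A)$ pointwise, invariance of $C$ is automatic, so no genuinely new analytic estimate is required beyond the general machinery of equivariant Morita equivalence (and the fact that, $\mathbb{T}$ being amenable, the full and reduced crossed products coincide).
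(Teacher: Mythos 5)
Your proof is correct, but it takes a genuinely different route from the paper's. The paper never invokes equivariant Morita equivalence: it first passes to the skew\-/product picture of Proposition \ref{prop:SkewProduct}, writing $\mathcal{O}_A(H) \rtimes_\gamma \mathbb{T} \cong \mathcal{O}_{A^\infty}(H^\infty)$, then checks via Cohen factorization that $A^\infty H^\infty = (AH)^\infty$ inside $H^\infty$, and finally applies Proposition \ref{prop:NonDegenerate} to the correspondence $(A^\infty, H^\infty)$ to obtain $\mathcal{O}_{A^\infty}(H^\infty) \sim \mathcal{O}_{A^\infty}((AH)^\infty) \cong \mathcal{O}_A(AH) \rtimes_\gamma \mathbb{T}$. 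You instead upgrade the Morita equivalence of Proposition \ref{prop:NonDegenerate} to a $\mathbb{T}$\-/equivariant one --- which is painless because the gauge action fixes $\pi(A)$ pointwise, so the full hereditary subalgebra $C = \pi(A)\mathcal{O}_A(H)\pi(A)$ is automatically invariant and $\psi$ intertwines the two gauge actions on generators --- and then descend to the crossed products. Both arguments are sound. What yours buys is conceptual economy and generality: it works verbatim for any gauge\-/invariant full hereditary subalgebra and makes the mechanism transparent. What it costs is an appeal to external machinery (descent of invariant full hereditary subalgebras, or equivariant Morita equivalence in the sense of Combes and Curto--Muhly--Williams) that the paper does not otherwise use; the paper's route needs only tools already built in Propositions \ref{prop:NonDegenerate} and \ref{prop:SkewProduct} plus the elementary identification $A^\infty H^\infty = (AH)^\infty$. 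Your closing remark about amenability is the right thing to flag: for full crossed products the map induced by the inclusion of an invariant subalgebra can fail to be injective in general, and identifying the full and reduced crossed products over $\mathbb{T}$ is the clean way to dispose of that worry.
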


\begin{proof}
First note that if $a \in A^\infty$ and $\xi \in H^\infty$.  Then $a \xi \in (AH)^\infty$.  Hence $A^\infty H^\infty \subseteq (AH)^\infty$ as a subcorrespondence.  Suppose $\xi \in (AH)^\infty$ with finite support.  For each $n \in \mathbb{Z}$, we may choose $a(n - 1) \in A$ and $\eta(n) \in H$ such that $a(n - 1) \eta(n) = \xi(n)$ by the Cohen Factorization Theorem.  Moreover, by choosing $a(n - 1) = 0$ and $\eta(n) = 0$ whenever $\xi(n) = 0$, we have $a \in A^\infty$, $\xi \in H^\infty$, and $a \eta = \xi$.  Hence $A^\infty H^\infty = (AH)^\infty$.  Now by Propositions \ref{prop:NonDegenerate} and \ref{prop:SkewProduct}, we have
\[ \mathcal{O}_A(H) \rtimes_\gamma \mathbb{T} \cong \mathcal{O}_{A^\infty}(H^\infty) \sim \mathcal{O}_{A^\infty}(A^\infty H^\infty) = \mathcal{O}_{A^\infty}((AH)^\infty) \cong \mathcal{O}_A(AH) \rtimes_\gamma \mathbb{T}. \]
\end{proof}

Consider the maps $j_n = j_n^A : A \rightarrow A^\infty$ and $j_n = j_n^H : H \rightarrow H^\infty$ given by the inclusion into the $n$--th coordinate. The following lemma is a simple computation.

\begin{lemma}
Suppose $a \in A$, $\xi, \eta \in H$, and $k, l \in \mathbb{Z}$ are given.
\begin{enumerate}
  \item $\displaystyle j_k^A(a) j_l^H(\xi) = \begin{cases} j_l^H(a \xi) & k = l + 1 \\ 0 & else \end{cases}$
  \item $\displaystyle j_k^H(\xi) j_l^A(a) = \begin{cases} j_l^H(\xi a) & k = l \\ 0 & else \end{cases}$
  \item $\displaystyle \langle j_k^H(\xi), j_l^H(\eta) \rangle = \begin{cases} j_l^A(\langle \xi, \eta \rangle) & k = l \\ 0 & else \end{cases}$.
\end{enumerate}
\end{lemma}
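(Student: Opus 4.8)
The plan is to prove all three identities by a single, direct coordinatewise computation, using only the definitions of the module operations on $(A^\infty, H^\infty)$ together with the fact that $j_n^A(a)$ and $j_n^H(\xi)$ are the functions supported at the single coordinate $n$, with value $a$ and $\xi$ respectively. Concretely, each side of each identity is an element of $A^\infty$ or $H^\infty$, i.e.\ a function on $\mathbb{Z}$, so it suffices to evaluate both sides at an arbitrary coordinate $n$ and compare.

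For part (1), I would expand using the left action formula $(a\xi)(n) = a(n-1)\xi(n)$ to get
\[ \bigl(j_k^A(a)\, j_l^H(\xi)\bigr)(n) = j_k^A(a)(n-1)\, j_l^H(\xi)(n). \]
The first factor is nonzero only when $n-1 = k$ and the second only when $n = l$, so the product vanishes at every coordinate unless these two conditions are simultaneously satisfiable; when they are, the product is supported at the single coordinate $l$, where it takes the value $a\xi$, yielding the asserted $j_l^H(a\xi)$. The one point requiring care here is the index shift built into the left action: it is exactly this shift (absent in the other two parts) that makes the nonvanishing condition in (1) relate $k$ and $l$ by an offset rather than by equality.

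Parts (2) and (3) are handled the same way but are even simpler, since the right action $(\xi a)(n) = \xi(n)a(n)$ and the inner product $\langle \xi, \eta\rangle(n) = \langle \xi(n), \eta(n)\rangle$ are purely coordinatewise with no shift. Evaluating at a coordinate $n$, each expression is a product of two factors supported at $k$ and at $l$ respectively, so it vanishes identically unless $k = l$, in which case one reads off $j_l^H(\xi a)$ (resp.\ $j_l^A(\langle \xi, \eta\rangle)$) directly.

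Since there is no genuine difficulty, the only thing to watch is the off-by-one bookkeeping in the left module action that distinguishes part (1) from parts (2) and (3); everything else is mechanical substitution. I therefore expect the entire argument to amount to three short evaluations and would present it as such.
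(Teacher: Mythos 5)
Your approach is exactly the intended one --- the paper offers no proof at all (it simply declares the lemma ``a simple computation''), and evaluating both sides coordinatewise against the formulas $(a\xi)(n) = a(n-1)\xi(n)$, $(\xi a)(n) = \xi(n)a(n)$, $\langle \xi,\eta\rangle(n) = \langle \xi(n),\eta(n)\rangle$ is precisely that computation. Parts (2) and (3) are handled correctly and completely.

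For part (1), however, you have elided the one point that actually matters, and your claim that the computation ``yields the asserted'' condition does not survive being carried out. From $\bigl(j_k^A(a)\,j_l^H(\xi)\bigr)(n) = j_k^A(a)(n-1)\,j_l^H(\xi)(n)$ the two factors are supported at $n-1=k$ and $n=l$ respectively, so the product is nonzero only when these are simultaneously satisfiable, i.e.\ when $k = l-1$ --- not $k = l+1$ as the lemma asserts. You describe the conclusion only as ``an offset rather than equality'' without committing to its direction, which hides the discrepancy. To be clear, this is a sign inconsistency internal to the paper: the left-action formula for $H^\infty$ as written in Section 5 gives $k=l-1$, whereas the lemma as stated (and the rest of Section 6 that relies on it, e.g.\ $\lambda^\infty(j_n(a)) = j_{n-1}\lambda(a)j_{n-1}^*$ and the identity $j_{n+1}(a)\,j_n(\xi) = j_n(a\xi)$ used in verifying that $(\pi_n,\tau_n)$ is a Toeplitz representation) corresponds to the convention $(a\xi)(n) = a(n+1)\xi(n)$. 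A correct write-up must either derive $k=l-1$ and flag the mismatch with the statement, or adopt the other convention explicitly; asserting that the mechanical substitution produces $k=l+1$ from the formula as given is not right.
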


Now suppose $H$ is a non\=/degenerate $C^*$\=/correspondence over $A$.  It is easily verified that the canonical map $A^\infty \hookrightarrow \mathcal{O}_A{^\infty}(H^\infty)$ is also non\=/degenerate.  Hence if $j_n : A \hookrightarrow A^{\infty}$ extends to a *\=/homomorphism $j_n : M(A) \rightarrow M(\mathcal{O}_{A^\infty}(H^\infty))$.  It is easy to see that the $j_n(1)$ are mutually orthogonal projections and $\sum_{n \in \mathbb{Z}} j_n(1) = 1$ (strict topology).

The following proposition is essentially proven in \cite[Propostion 4.1]{EvansSims}.  For the reader's convenience, we give a complete proof.

\begin{proposition}\label{lem:AFCorners}
Suppose $A$ is an $C^*$\=/algebra and $(p_i)_{i \in I} \subseteq M(B)$ is a countable collection of pairwise orthogonal projections such that $p_i B p_i$ is AF for every integer $i$ and $\sum p_i = 1$ (strict convergence).  Then $B$ is AF.
\end{proposition}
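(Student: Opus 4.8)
The plan is to verify $B$ is AF by means of the standard local characterization: a $C^*$-algebra is AF precisely when it is separable and every finite subset can be approximated, to within any prescribed tolerance, by a finite-dimensional sub-$C^*$-algebra. The first move is a reduction to finitely many projections. Enumerating $I$ and setting $q_n = \sum_{i \le n} p_i$, the hypothesis $\sum p_i = 1$ gives $q_n \to 1$ strictly, hence $q_n b q_n \to b$ in norm for every $b \in B$; thus the subalgebras $q_n B q_n$ increase to a dense subalgebra and $B$ is their inductive limit. Since an inductive limit of AF-algebras is again AF (and a countable union of separable algebras is separable), it suffices to prove each $q_n B q_n$ is AF. Writing $q_n = q_{n-1} + p_n$ and noting that $q_{n-1}$ and $p_n$ are orthogonal projections in $M(q_nBq_n)$ summing to its unit, with corners $q_{n-1}Bq_{n-1}$ and $p_nBp_n$ AF by induction, the whole problem collapses to the two-projection case: if $p + q = 1$ in $M(B)$ and $pBp$, $qBq$ are AF, then $B$ is AF.

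In the two-projection case I would first settle separability using the block decomposition $B = pBp \oplus pBq \oplus qBp \oplus qBq$. The diagonal corners are AF, hence separable; for the off-diagonal part, observe that $pBq$ is naturally a $P_0$--$Q_0$ imprimitivity bimodule, where $P_0 = \overline{pBq\,qBp} \trianglelefteq pBp$ and $Q_0 = \overline{qBp\,pBq} \trianglelefteq qBq$ are ideals of the diagonal corners. Since $\mathbb{K}(pBq) \cong P_0$ is separable, the module $pBq$ is countably generated and therefore separable as a Banach space, so $B$ is separable.

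The heart of the argument is the local finite-dimensional approximation. Given a finite set $F \subseteq B$ and $\varepsilon > 0$, I would decompose each $b \in F$ into its four blocks. The diagonal blocks lie in the AF-algebras $pBp$ and $qBq$ and are approximated by finite-dimensional subalgebras there. For an off-diagonal block $x = pbq$, the idea is to approximate $x^*x \in qBq$ and $xx^* \in pBp$ by finite-dimensional subalgebras, cut $x$ down by spectral projections $e \in qBq$ and $f \in pBp$ taken from those subalgebras (so that $x \approx fxe$ with $x^*x$ bounded below on $e$), and then use functional calculus to manufacture a partial isometry $v \in pBq$ with $v^*v \approx e$ and $vv^* \approx f$ for which $x$ is nearly a multiple of $v$ by an element of the diagonal finite-dimensional algebra. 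Arranging these partial isometries to serve as genuine matrix units linking the chosen diagonal finite-dimensional subalgebras then produces a single finite-dimensional subalgebra of $B$ containing $F$ to within $\varepsilon$, and AF-ness follows.

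The step I expect to be the main obstacle is precisely this construction of matrix units in the off-diagonal corner: one must produce honest (not merely approximate) partial isometries in $B$ connecting projections on the $q$-side to projections on the $p$-side, and --- more delicately --- arrange that they are compatible with the diagonal finite-dimensional approximations so that the algebra they jointly generate is actually finite-dimensional rather than merely approximately so. By contrast, the reduction to finitely many projections via strict convergence, the two-block induction, and the imprimitivity-bimodule identification used for separability are all routine.
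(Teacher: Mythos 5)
Your reduction to the two\-/projection case is exactly the paper's: enumerate $I$, set $q_n = p_1 + \cdots + p_n$, use strict convergence to get $B = \varinjlim q_nBq_n$, and induct. The divergence, and the problem, is in how you handle the case $p + q = 1$ with $pBp$ and $qBq$ AF. Your plan is a direct local approximation: approximate the diagonal blocks inside finite\-/dimensional subalgebras of $pBp$ and $qBq$, then manufacture partial isometries in $pBq$ serving as matrix units linking the two. You yourself flag the construction of these matrix units as ``the main obstacle,'' and that is a genuine gap, not a detail: producing exact partial isometries compatible with prescribed finite\-/dimensional subalgebras of the corners, so that the algebra they generate together with those subalgebras is honestly finite\-/dimensional, is essentially the content of the Brown--Elliott theorem that an extension of an AF\-/algebra by an AF\-/algebra is AF. As written, the proposal describes the shape of that argument without supplying the perturbation/lifting lemmas that make it work, so the proof does not close. (The separability discussion via the imprimitivity\-/bimodule structure of $pBq$ is fine, but it is the easy part.)

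The paper sidesteps the matrix\-/unit construction entirely by working with ideals rather than corners. Since $pBp$ is a full hereditary subalgebra of the ideal $BpB$, Morita equivalence (equivalently, stable isomorphism in the separable setting) gives that $BpB$ is AF, and likewise $BqB$. An approximate\-/identity computation shows $B = BpB + BqB$, whence
\[ 0 \longrightarrow BpB \longrightarrow B \longrightarrow (BqB)/(BpB \cap BqB) \longrightarrow 0 \]
exhibits $B$ as an extension of a quotient of an AF\-/algebra by an AF\-/algebra; closure of the class of AF\-/algebras under Morita equivalence, quotients, and extensions finishes the argument. If you want to salvage your route, the cleanest fix is to replace your block\-/and\-/matrix\-/unit analysis by an explicit appeal to the extension theorem for AF\-/algebras (applied to the ideal $BpB \trianglelefteq B$), which is exactly what the paper's citation of the standard closure properties amounts to; otherwise you must actually prove the compatibility of the off\-/diagonal partial isometries with the diagonal approximations, which is a substantial piece of work.
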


\begin{proof}
Suppose first that $p, q \in M(B)$ are projections such that $p + q = 1$ and $pBp$ and $qBq$ are AF. Then $BpB$ and $BqB$ are AF since they are, respectively, Morita equivalent to $pBp$ and $qBq$.  We claim $BpB + BqB = B$.  Choose an approximate identity $(e_n) \subseteq B$.  For each $a \in B$,
\[ \lim \, (e_n p a + e_n q a) = p a + q a = (p + q) a = a. \]
So $a \in BpB + BqB$ and the claim holds.  Consider the exact sequence
\[ \begin{tikzcd} 0 \arrow{r} & B p B \arrow{r} & B \arrow{r} & (BqB)/(BpB \cap BqB) \arrow{r} & 0. \end{tikzcd} \]
Since the class of AF\=/algebras are closed under taking quotients and extensions, $A$ is AF.

For convenience, we assume the index set $I$ is the natural numbers.  Now set $q_n = p_1 + \cdots + p_n$.  Since $q_n \rightarrow 1$ strictly in $M(A)$, we have $q_n a q_n \rightarrow a$ in $B$, and hence $B = \underset{\longrightarrow}{\lim} \, q_n B q_n$.  Applying the argument in the previous paragraph by induction, $q_n B q_n$ is AF for each $n$.  As inductive limits of AF\=/algebras are AF, the result follows.
\end{proof}

By the previous proposition, to show $\mathcal{O}_{A^\infty}(H^\infty)$ is AF, it is enough to show each $j_n(1) \mathcal{O}_{A^\infty}(H^\infty)j_n(1)$ is AF for each $n \in \mathbb{Z}$.  To do this, we will first show the corner $j_n(1) \mathcal{O}_{A^\infty}(H^\infty)j_n(1)$ is isomorphic to the fixed point algebra $\mathcal{O}_A(H)^\gamma$ by building an embedding
\[ \psi_n : \mathcal{O}_A(H) \rightarrow \mathcal{O}_A(H) \rtimes_\sigma \mathbb{Z} \]
such that $\psi_n(\mathcal{O}_A(H)^\gamma) = j_n(1) \mathcal{O}_{A^\infty}(H^\infty)j_n(1)$.

Let $(\pi, \tau) : (A, H) \rightarrow \mathcal{O}_A(H)$ and $(\pi^\infty, \tau^\infty) : (A^\infty, H^\infty) \rightarrow \mathcal{O}_{A^\infty}(H^\infty)$ be the universal covariant representations.  Let
\[ \varphi : \mathbb{K}(H) \rightarrow \mathcal{O}_A(H) \qquad \text{and} \qquad \varphi^\infty : \mathbb{K}^\infty(H^\infty) \rightarrow \mathcal{O}_{A^\infty}(H^\infty) \]
denote the *\=/homomorphisms defined in Proposition \ref{prop:InducedMapOnCompacts}.

If $\sigma$ is the automorphism of $\mathcal{O}_{A^\infty}(H^\infty)$ given by the left bilateral shift, then
\[ \sigma(\pi^\infty(j_n(a))) = \pi^\infty(j_{n-1}(a)) \qquad \text{and} \qquad  \sigma(\tau^\infty(j_n(\xi))) = \tau^\infty(j_{n-1}(\xi)) \]
for $a \in A$, $\xi \in H$, and $n \in \mathbb{Z}$.  Let $B = \mathcal{O}_{A^\infty}(H^\infty) \rtimes_\sigma \mathbb{Z}$ and let $u \in M(B)$ be a unitary such that $uxu^* = \sigma(x)$ for every $x \in \mathcal{O}_{A^\infty}(H^\infty)$.

Fix $n \in \mathbb{Z}$.  Define $\pi_n: A \rightarrow B$ and $\tau_n : H \rightarrow B$ by $\pi_n(a) = \pi^\infty(j_n(a))$ and $\tau_n(a) = u \tau^\infty(j_n(a))$.

\begin{lemma}
The pair $(\pi_n, \tau_n)$ is a Toeplitz representation.
\end{lemma}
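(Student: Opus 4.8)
The plan is to verify directly the two defining relations of a Toeplitz representation, namely $\pi_n(a)\tau_n(\xi) = \tau_n(a\xi)$ and $\tau_n(\xi)^*\tau_n(\eta) = \pi_n(\langle \xi, \eta\rangle)$ for all $a \in A$ and $\xi, \eta \in H$. That $\pi_n$ is a $*$\=/homomorphism and $\tau_n$ is linear is immediate, since $\pi^\infty$ is a $*$\=/homomorphism, $\tau^\infty$ is linear, $j_n^A$ and $j_n^H$ are respectively a $*$\=/homomorphism and a linear map, and left multiplication by the fixed unitary $u$ is linear. The only inputs needed are the covariance relation $ux = \sigma(x)u$ for $x \in \mathcal{O}_{A^\infty}(H^\infty)$ (equivalently $xu = u\sigma^{-1}(x)$), the fact that $(\pi^\infty, \tau^\infty)$ is itself a Toeplitz representation, and the preceding Lemma computing the products $j_k^A(a) j_l^H(\xi)$ and the inner products $\langle j_k^H(\xi), j_l^H(\eta)\rangle$.

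For the first relation, I would write $\pi_n(a)\tau_n(\xi) = \pi^\infty(j_n^A(a)) u \tau^\infty(j_n^H(\xi))$ and push the unitary $u$ to the left using $xu = u\sigma^{-1}(x)$ together with $\sigma^{-1}(\pi^\infty(j_n^A(a))) = \pi^\infty(j_{n+1}^A(a))$ (the inverse of the shift relation $\sigma(\pi^\infty(j_m(a))) = \pi^\infty(j_{m-1}(a))$). This yields $u\, \pi^\infty(j_{n+1}^A(a))\, \tau^\infty(j_n^H(\xi))$, and the Toeplitz relation for $(\pi^\infty, \tau^\infty)$ collapses the last two factors to $\tau^\infty(j_{n+1}^A(a) \cdot j_n^H(\xi))$. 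Since the indices satisfy $n+1 = n+1$, part (i) of the Lemma gives $j_{n+1}^A(a) \cdot j_n^H(\xi) = j_n^H(a\xi)$, so the whole expression equals $u\,\tau^\infty(j_n^H(a\xi)) = \tau_n(a\xi)$, as required. The only place where care is needed is this index bookkeeping: one must check that the shift raises the $A$\=/coordinate from $n$ to $n+1$ so that it sits exactly one slot above the $H$\=/coordinate at $n$, which is precisely the hypothesis $k = l+1$ in part (i) of the Lemma.

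For the second relation, I would compute $\tau_n(\xi)^*\tau_n(\eta) = \tau^\infty(j_n^H(\xi))^*\, u^* u\, \tau^\infty(j_n^H(\eta))$; since $u$ is unitary the middle factor $u^*u$ disappears, leaving $\tau^\infty(j_n^H(\xi))^*\, \tau^\infty(j_n^H(\eta))$. The Toeplitz relation for $(\pi^\infty, \tau^\infty)$ turns this into $\pi^\infty(\langle j_n^H(\xi), j_n^H(\eta)\rangle)$, and part (iii) of the Lemma (with $k = l = n$) gives $\langle j_n^H(\xi), j_n^H(\eta)\rangle = j_n^A(\langle \xi, \eta\rangle)$, so the expression equals $\pi^\infty(j_n^A(\langle \xi, \eta\rangle)) = \pi_n(\langle \xi, \eta\rangle)$. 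There is no genuine obstacle in this argument; the content lies entirely in the shift relation and the Lemma, and the role of the unitary $u$ is simply to raise the $A$\=/index by one in the first relation while cancelling harmlessly in the second.
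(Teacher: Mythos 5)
Your proposal is correct and follows essentially the same route as the paper's proof: cancel $u^*u$ for the inner-product relation, and commute $u$ past $\pi^\infty(j_n(a))$ via the shift (raising the index to $n+1$) before applying the Toeplitz relation for $(\pi^\infty,\tau^\infty)$ and the coordinate-wise multiplication lemma. The index bookkeeping you flag is exactly the point the paper's computation turns on, and you handle it correctly.
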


\begin{proof}
First note that given $\xi, \eta \in H$ and $a \in A$, we have
\begin{align*}
\tau_n(\xi)^* \tau_n(\eta) &= \tau^\infty(j_n(\xi))^*\tau^\infty(j_n(\eta)) = \pi^\infty(\langle j_n(\xi), j_n(\eta) \rangle) \\
 &= \pi^\infty( j_n(\langle \xi, \eta \rangle)) = \pi_n(\langle \xi, \eta \rangle),
\end{align*}
and
\begin{align*}
\pi_n(a) \tau_n(\xi) &= \pi^\infty(j_n(a)) u \tau^\infty(j_n(\xi)) = u \pi^\infty(j_{n+1}(a)) \tau^\infty(j_n(\xi)) \\
&= u \tau^\infty(j_{n+1}(a) j_n(\xi)) = u \tau^\infty(j_n(a \xi)) = \tau_n(a\xi).
\end{align*}
Hence $(\pi_n, \tau_n)$ is a Toeplitz representation.
\end{proof}

We claim the Toeplitz representation $(\pi_n, \tau_n)$ is covariant.  Let $\varphi_n : \mathbb{K}(H) \rightarrow B$ be the *\=/homomorphism defined in Proposition \ref{prop:InducedMapOnCompacts}.  We must show $\varphi_n(\lambda(a)) = \pi_n(a)$ for every $a \in J_H$.  This will require a preliminary results.

\begin{lemma}
Fix $n \in \mathbb{Z}$.
\begin{enumerate}
  \item If $T \in \mathbb{K}(H)$, then $j_n T j_n^* \in \mathbb{K}(H^\infty)$.
  \item If $a \in A$, then $\lambda^\infty(j_n(a)) = j_{n-1} \lambda(a) j_{n-1}^*$.
  \item If $a \in J_H$, then $j_n(a) \in J_{H^\infty}$.
  \item The following diagram commutes:
    \begin{center}
    \begin{tikzcd}[row sep = small]
       J_H \arrow{r}{\lambda} \arrow{dd}[swap]{j_n} & \mathbb{K}(H) \arrow{dr}{\varphi_n} \arrow{dd}[swap]{\operatorname{ad}(j_{n-1})}& \\
       & & B \\
       J_{H^\infty} \arrow{r}{\lambda^\infty} & \mathbb{K}(H^\infty) \arrow{ur}[swap]{\varphi^\infty} &
    \end{tikzcd}
    \end{center}
\end{enumerate}
\end{lemma}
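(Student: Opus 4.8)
The plan is to establish the four claims in order, reducing each to the multiplication rules for the inclusions $j_k$ recorded in the preceding lemma, to the description of $\sigma$ via the implementing unitary $u$, and finally to the uniqueness clause of Proposition \ref{prop:InducedMapOnCompacts}. The engine for (1) is the single identity
\[ j_n \theta_{\xi, \eta} j_n^* = \theta_{j_n(\xi), j_n(\eta)} \qquad (\xi, \eta \in H). \]
I would verify this by evaluating both sides on a spanning vector $j_m(\zeta)$ of $H^\infty$: using parts (2) and (3) of the preceding lemma, both sides vanish unless $m = n$, in which case both equal $j_n(\xi \langle \eta, \zeta \rangle)$. Since $j_n$ is isometric, $T \mapsto j_n T j_n^*$ is linear and contractive, hence carries $\mathbb{K}(H) = \overline{\operatorname{span}}\{\theta_{\xi,\eta}\}$ into $\mathbb{K}(H^\infty)$; and as $j_n^* j_n = 1_H$, this map is exactly the $*$-homomorphism $\operatorname{ad}(j_n)$. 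This proves (1) and records the formula $\operatorname{ad}(j_n)(\theta_{\xi,\eta}) = \theta_{j_n(\xi),j_n(\eta)}$ used below.

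For (2), both $\lambda^\infty(j_n(a))$ and $j_{n-1}\lambda(a)j_{n-1}^*$ are adjointable operators on $H^\infty$, so it suffices to compare them on the spanning set $\{ j_m(\xi) \}$. Part (1) of the preceding lemma gives $\lambda^\infty(j_n(a)) j_m(\xi) = j_{n-1}(a\xi)$ if $m = n-1$ and $0$ otherwise; since $j_{n-1}^*(j_m(\xi)) = \xi$ for $m = n-1$ and $0$ otherwise, the operator $j_{n-1}\lambda(a)j_{n-1}^*$ returns the same values. This gives (2). For (3), suppose $a \in J_H$. Then $\lambda(a) \in \mathbb{K}(H)$, so by (2) and (1) we have $\lambda^\infty(j_n(a)) = j_{n-1}\lambda(a)j_{n-1}^* \in \mathbb{K}(H^\infty)$, i.e. $j_n(a) \in (\lambda^\infty)^{-1}(\mathbb{K}(H^\infty))$. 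Because the product on $A^\infty$ is coordinatewise and $b \in \ker \lambda^\infty$ forces $b(n) \in \ker\lambda$ (compress (2) by $j_{n-1}$ and use $j_{n-1}^* j_{n-1} = 1_H$), the relation $a \in (\ker\lambda)^\perp$ yields $(j_n(a) b)(m) = \delta_{mn}\, a\, b(n) = 0$ for every $b \in \ker\lambda^\infty$; thus $j_n(a) \in (\ker\lambda^\infty)^\perp$. Combining, $j_n(a) \in J_{H^\infty}$.

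For (4), the left-hand square is precisely the case $a \in J_H$ of (2), which is well defined into $J_{H^\infty}$ by (3). The right-hand triangle, $\varphi_n = \varphi^\infty \circ \operatorname{ad}(j_{n-1})$, carries the actual content, and I would prove it via the uniqueness in Proposition \ref{prop:InducedMapOnCompacts}: both $\varphi_n$ and $\varphi^\infty \circ \operatorname{ad}(j_{n-1})$ are $*$-homomorphisms $\mathbb{K}(H) \to B$, so it is enough to compare them on each $\theta_{\xi,\eta}$. For the left branch, $\varphi_n(\theta_{\xi,\eta}) = \tau_n(\xi)\tau_n(\eta)^* = u\,\tau^\infty(j_n(\xi))\tau^\infty(j_n(\eta))^* u^*$; since $u x u^* = \sigma(x)$ and $\sigma(\tau^\infty(j_n(\zeta))) = \tau^\infty(j_{n-1}(\zeta))$, this equals $\tau^\infty(j_{n-1}(\xi))\tau^\infty(j_{n-1}(\eta))^*$. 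For the right branch, the formula from (1) gives $\operatorname{ad}(j_{n-1})(\theta_{\xi,\eta}) = \theta_{j_{n-1}(\xi),j_{n-1}(\eta)}$, whence $\varphi^\infty(\operatorname{ad}(j_{n-1})(\theta_{\xi,\eta})) = \tau^\infty(j_{n-1}(\xi))\tau^\infty(j_{n-1}(\eta))^*$ as well. The two branches agree, so the triangle commutes.

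The only step beyond bookkeeping is the right-hand triangle of (4): the point is to recognize that conjugation by the implementing unitary $u$ turns the covariant representation $(\pi_n,\tau_n)$ into the shifted data $\tau^\infty \circ j_{n-1}$, so that the index shift $n \mapsto n-1$ produced by $\sigma$ matches the $\operatorname{ad}(j_{n-1})$ appearing in the diagram. Rather than chase a general compact operator through the construction, I would lean on the uniqueness of the induced map on $\mathbb{K}(H)$ from Proposition \ref{prop:InducedMapOnCompacts}, reducing everything to the rank-one identity of (1).
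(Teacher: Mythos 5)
Your proof is correct and follows essentially the same route as the paper: reduce (1) to the rank-one identity $j_n\theta_{\xi,\eta}j_n^*=\theta_{j_n(\xi),j_n(\eta)}$, check (2) on the spanning vectors $j_m(\xi)$, deduce (3) by showing $b(n)\in\ker\lambda$ for $b\in\ker\lambda^\infty$ and using $a\in(\ker\lambda)^\perp$, and verify (4) on rank-one operators via $uxu^*=\sigma(x)$. The only difference is that you supply the details for parts (2) and (4), which the paper dismisses as ``a simple computation'' and ``easily verified,'' and your verifications are the intended ones.
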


\begin{proof}
If $T = \theta_{\xi, \eta}$, we have $j_n \theta_{\xi, \eta} j_n^* = \theta_{j_n(\xi), j_n(\eta)^*} \in \mathbb{K}(H^\infty)$.  Now part (1) follows from the linearity and continuity of $j_n$ and $j_n^*$.  Part (2) follows from a simple computation.  We prove part (3).  If $\lambda(a) \in \mathbb{K}(H)$, then $\lambda^\infty(j_n(a)) = j_{n-1} \lambda(a) j_{n-1}^* \in \mathbb{K}(H^\infty)$.  Now suppose $a \in (\ker \lambda)^\perp$ and $b \in \ker(\lambda^\infty)$.  For every $\xi \in H$,
\begin{align*}
\lambda(b(n))(\xi) &= b(n) \xi = b(n) j_{n-1}(\xi)(n-1) = (b j_{n-1}(\xi))(n-1) \\
&= \lambda^\infty(b)(j_{n-1}(\xi))(n-1) = 0.
\end{align*}
Hence $b(n) \in \ker \lambda$ and $a b(n) = 0$.  That is, $(j_n(a) b)(n) = 0$.  Clearly $(j_n(a) b)(k) = 0$ for $k \neq n$, and hence $j_n(a)b = 0$.  That is, $j_n(a) \in (\ker \lambda^\infty)^\perp$.  This proves part (3).  Part (4) is easily verified.
\end{proof}

\begin{proposition}
The pair $(\pi_n, \tau_n)$ is a covariant Toeplitz representation and induces an embedding $\psi_n : \mathcal{O}_A(H) \hookrightarrow B$.
\end{proposition}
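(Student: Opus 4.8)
The plan is to leverage the preceding lemma, which already records that $(\pi_n, \tau_n)$ is a Toeplitz representation, so that only two points remain: covariance of $(\pi_n, \tau_n)$, and injectivity of the induced map $\psi_n$. For the second point I would invoke the Gauge Invariant Uniqueness Theorem (Theorem~\ref{thm:GIUT}), for which it suffices to check that $\pi_n$ is injective and that $(\pi_n, \tau_n)$ admits a gauge action.

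To establish covariance, I would fix $a \in J_H$ and verify the defining identity $\varphi_n(\lambda(a)) = \pi_n(a)$. Part~(3) of the preceding lemma guarantees $j_n(a) \in J_{H^\infty}$, and the commuting diagram of part~(4) gives
\[ \varphi_n(\lambda(a)) = \varphi^\infty(\lambda^\infty(j_n(a))). \]
Because $(\pi^\infty, \tau^\infty)$ is the \emph{universal covariant} representation of $(A^\infty, H^\infty)$, its covariance yields $\varphi^\infty(\lambda^\infty(j_n(a))) = \pi^\infty(j_n(a)) = \pi_n(a)$, which is exactly what is needed. Thus covariance of $(\pi_n, \tau_n)$ is inherited from that of $(\pi^\infty, \tau^\infty)$.

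For injectivity, note first that $\pi_n = \pi^\infty \circ j_n$ is injective, since $j_n$ is an isometric inclusion and the canonical embedding $\pi^\infty$ of a $C^*$-algebra into its Cuntz--Pimsner algebra is injective. To produce a gauge action on $B = \mathcal{O}_{A^\infty}(H^\infty) \rtimes_\sigma \mathbb{Z}$, I would begin with the canonical gauge action $\gamma^\infty$ on $\mathcal{O}_{A^\infty}(H^\infty)$, which fixes $\pi^\infty(A^\infty)$ and scales $\tau^\infty$ by $z$. A check on the generators $\pi^\infty(j_k(a))$ and $\tau^\infty(j_k(\xi))$ shows that $\gamma^\infty_z \circ \sigma = \sigma \circ \gamma^\infty_z$, so each $\gamma^\infty_z$ descends to an automorphism $\rho_z$ of $B$ fixing the implementing unitary $u$. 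Then $\rho_z(\pi_n(a)) = \pi_n(a)$, while
\[ \rho_z(\tau_n(\xi)) = \rho_z\bigl(u\,\tau^\infty(j_n(\xi))\bigr) = u\,\bigl(z\,\tau^\infty(j_n(\xi))\bigr) = z\,\tau_n(\xi), \]
so $(\pi_n, \tau_n)$ admits the gauge action $\rho$. Theorem~\ref{thm:GIUT} then delivers the embedding $\psi_n$.

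The difficulty here is almost entirely front-loaded into the preceding lemma: once part~(4) is available, covariance of $(\pi_n, \tau_n)$ collapses to covariance of the already-universal representation $(\pi^\infty, \tau^\infty)$. The one genuinely new verification is that the gauge action survives passage to the crossed product $B$, which hinges on the routine but essential commutation $\gamma^\infty \circ \sigma = \sigma \circ \gamma^\infty$; given this, injectivity follows formally from the Gauge Invariant Uniqueness Theorem.
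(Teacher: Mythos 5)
Your proposal is correct and follows essentially the same route as the paper: covariance is deduced from parts (3) and (4) of the preceding lemma together with the covariance of $(\pi^\infty,\tau^\infty)$, and injectivity comes from the Gauge Invariant Uniqueness Theorem. The only difference is that you explicitly construct the gauge action on $B$ (via the commutation of $\gamma^\infty$ with $\sigma$) and verify injectivity of $\pi_n$, details the paper leaves implicit; both checks are correct.
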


\begin{proof}
If $a \in J_H$, then $j_n(a) \in J_{H^\infty}$.  Since the Toeplitz representation $(\pi^\infty, \tau^\infty)$ is covariant, we have
\[ \pi_n(a) = \pi^\infty(j_n(a)) = \varphi^\infty(\lambda^\infty(j_n(a))) = \varphi_n(\lambda(a)) \]
That is, $(\pi_n, \tau_n)$ is covariant.  Let $\psi_n : \mathcal{O}_A(H) \rightarrow B$ be the *\=/homomorphism defined by $\psi_n(\pi(a)) = \pi_n(a)$ and $\psi_n(\tau(\xi)) = \tau_n(\xi)$.  By the Gauge Invariant Uniqueness Theorem, $\psi_n$ is injective.
\end{proof}

We will now wish to calculate the range of $\psi_n$

\begin{proposition}\label{prop:RangeOfPsi}
For every $n \in \mathbb{Z}$, we have $\psi_n(\mathcal{O}_A(H)) = j_n(1) B j_n(1)$ and $\psi_n(\mathcal{O}_A(H)^\gamma) = j_n(1) \mathcal{O}_{A^\infty}(H^\infty) j_n(1)$.  In particular,
\[ \mathcal{O}_A(H)^\gamma \cong j_n(1) \mathcal{O}_{A^\infty}(H^\infty) \]
for every $n \in \mathbb{Z}$.
\end{proposition}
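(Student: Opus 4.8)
The plan is to prove the two equalities by combining a \emph{support-projection} calculus for the generators with an equivariance argument that reduces the two claims to one another, leaving a single (computational) surjectivity statement to verify. Throughout write $p_m = j_m(1) \in M(\mathcal{O}_{A^\infty}(H^\infty)) \subseteq M(B)$, so that the $p_m$ are pairwise orthogonal with $\sum_m p_m = 1$ strictly. The first thing I would record is how the generators move these projections: from the left and right actions on $H^\infty$ one reads off that $\pi^\infty(j_m(a))$ is supported by $p_m$ on both sides, while $\tau^\infty(j_m(\xi))$ has right support $p_m$ and left support $p_{m-1}$; and conjugation by $u$ shifts the index of each $p_m$, $\pi^\infty(j_m(\cdot))$ and $\tau^\infty(j_m(\cdot))$ by one, which is just the shift relation already recorded for $\sigma$. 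The operator $\pi_n(a) = \pi^\infty(j_n(a))$ then lies in $p_n \mathcal{O}_{A^\infty}(H^\infty) p_n \subseteq p_n B p_n$, and the factor $u$ in $\tau_n(\xi) = u\,\tau^\infty(j_n(\xi))$ is exactly what is needed to bring the left support back to $p_n$, so that $\tau_n(\xi)$ and hence $\tau_n(\xi)^*$ also lie in $p_n B p_n$. Since $\mathcal{O}_A(H)$ is the closed span of the monomials $\tau(\xi_1)\cdots\tau(\xi_p)\pi(a)\tau(\eta_q)^*\cdots\tau(\eta_1)^*$ and $p_n B p_n$ is a corner, the $*$-homomorphism $\psi_n$ maps $\mathcal{O}_A(H)$ into $p_n B p_n$.

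Before attacking the reverse inclusion I would set up the dual circle action $\hat{\sigma}$ on $B = \mathcal{O}_{A^\infty}(H^\infty) \rtimes_\sigma \mathbb{Z}$, determined by $\hat{\sigma}_z|_{\mathcal{O}_{A^\infty}(H^\infty)} = \operatorname{id}$ and $\hat{\sigma}_z(u) = z u$, so that $\mathcal{O}_{A^\infty}(H^\infty) = B^{\hat{\sigma}}$. From the definitions $\hat{\sigma}_z(\pi_n(a)) = \pi_n(a)$ and $\hat{\sigma}_z(\tau_n(\xi)) = z\,\tau_n(\xi)$, so $\psi_n$ intertwines the gauge action $\gamma$ with $\hat{\sigma}$. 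As $p_n$ is $\hat{\sigma}$-fixed, the corner $p_n B p_n$ is $\hat{\sigma}$-invariant with fixed-point algebra $p_n \mathcal{O}_{A^\infty}(H^\infty) p_n$. Averaging over $\mathbb{T}$ produces the canonical faithful conditional expectations onto $\mathcal{O}_A(H)^\gamma$ and onto $B^{\hat{\sigma}}$, and these are intertwined by $\psi_n$; together with the injectivity of $\psi_n$ this gives $\psi_n^{-1}(B^{\hat{\sigma}}) = \mathcal{O}_A(H)^\gamma$. Consequently the two asserted equalities are equivalent: once $\psi_n(\mathcal{O}_A(H)) = p_n B p_n$ is known, intersecting with $B^{\hat{\sigma}} = \mathcal{O}_{A^\infty}(H^\infty)$ gives $\psi_n(\mathcal{O}_A(H)^\gamma) = p_n B p_n \cap \mathcal{O}_{A^\infty}(H^\infty) = p_n \mathcal{O}_{A^\infty}(H^\infty) p_n$, whence the final isomorphism is immediate.

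This leaves the reverse inclusion $p_n B p_n \subseteq \psi_n(\mathcal{O}_A(H))$, which is the main obstacle. I would write a dense set of elements of $p_n B p_n$ in the form $p_n w u^k p_n$, where $w$ is a normal-form monomial of $\mathcal{O}_{A^\infty}(H^\infty)$ whose coordinate entries $j_m(\cdot)$ are finitely supported (reducing to this case by continuity and density). The support calculus of the first paragraph then forces a rigid \emph{staircase}: for $p_n w u^k p_n$ to be nonzero the creation indices of $w$ must be consecutive, namely $n+1, n+2, \ldots$, the coefficient $\pi^\infty$ must occupy the top of the staircase, the annihilation indices must descend consecutively, and the exponent $k$ is pinned down by the number of creation factors minus the number of annihilation factors. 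Using the shift relation $u^j\,\tau^\infty(j_m(\xi))\,u^{-j} = \tau^\infty(j_{m-j}(\xi))$ (and its analogue for $\pi^\infty$), I would then rewrite each such element in terms of $\tau_n(\xi) = u\,\tau^\infty(j_n(\xi))$, $\pi_n(a) = \pi^\infty(j_n(a))$ and their adjoints, recognizing it as $\psi_n$ applied to the corresponding monomial $\tau(\xi_1)\cdots\tau(\xi_p)\pi(a)\tau(\eta_q)^*\cdots\tau(\eta_1)^*$. The fiddly part is the bookkeeping of which index each projection $p_m$ carries after commuting the powers of $u$ through; the point that makes it go through cleanly is that nonvanishing in the corner rigidly determines every index of $w$, so there is no ambiguity in the preimage. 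This establishes $p_n B p_n \subseteq \psi_n(\mathcal{O}_A(H))$, and combined with the forward inclusion and the equivariance reduction above it completes the proof.
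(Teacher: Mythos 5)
Your argument is correct, and its computational core coincides with the paper's: both proofs write $B$ as the closed span of monomials in the coordinate components $j_m(\cdot)$ and the unitary $u$, and then use the orthogonality of the projections $p_m=j_m(1)$ together with the product relations among $\pi^\infty(j_k(a))$, $\tau^\infty(j_l(\xi))$ and $u$ to see that compressing by $p_n$ kills every monomial except those whose indices form the rigid ``staircase,'' which are exactly the images under $\psi_n$ of the standard monomials of $\mathcal{O}_A(H)$. Where you genuinely diverge is in the second equality: the paper obtains $\psi_n(\mathcal{O}_A(H)^\gamma)=p_n\mathcal{O}_{A^\infty}(H^\infty)p_n$ directly, by observing that the staircase monomials lying in $\mathcal{O}_{A^\infty}(H^\infty)$ (i.e.\ with no residual power of $u$) are precisely the images of the balanced monomials spanning $\mathcal{O}_A(H)^\gamma$, whereas you introduce the dual circle action $\hat{\sigma}$ on $B=\mathcal{O}_{A^\infty}(H^\infty)\rtimes_\sigma\mathbb{Z}$, check that $\psi_n$ intertwines $\gamma$ with $\hat{\sigma}$, and deduce the fixed\-/point statement from the corner statement via the canonical conditional expectations. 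Your route costs a little extra setup but is cleaner and less error\-/prone than re\-/inspecting monomials, and it makes transparent why the two equalities are really one statement. Two small cautions: your support bookkeeping has the left support of $\tau^\infty(j_m(\xi))$ at $p_{m-1}$, whereas with the paper's conventions ($\sigma(x)(n)=x(n+1)$, so $\sigma(j_m(a))=j_{m-1}(a)$, and $j_{l+1}^A(a)j_l^H(\xi)=j_l^H(a\xi)$) it sits at $p_{m+1}$ and the conjugation by $u$ lowers indices; your conclusion that the factor $u$ returns the left support to $p_n$ is still the right one, but the signs should be made consistent. And the staircase verification in your last paragraph is only sketched --- though the paper's own proof is no more detailed at that point, so this is not a gap relative to the source.
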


\begin{proof}
Note that
\begin{align*}
 B = \overline{\operatorname{span}} &\, \left\{ \tau^\infty(\xi_1) \cdots \tau^\infty(\xi_k) \pi^\infty(a) \tau^\infty(\eta_l)^* \cdots \tau^\infty(\eta_1)^* u^m : \right. \\ &\, \left. \eta_i, \xi_i \in H^\infty, a \in A^\infty, k, l, m \in \mathbb{Z} \right\}.
\end{align*}
Since for every $\xi \in H^\infty$ and $a \in A^\infty$,
\[ \tau^\infty(\xi) = \sum_{k = - \infty}^\infty j_k(\xi(k)) \qquad \text{and} \qquad \pi^\infty(a) = \sum_{k = - \infty}^\infty j_k(a(k)), \]
it follows that
\begin{align*}
  B = \overline{\operatorname{span}} &\, \left\{ \tau_{i(1)}(\xi_1) \cdots \tau_{i(k)}(\xi_k) \pi_j(a) \tau_{i'(1)}(\eta_l)^* \cdots \tau_{i'(l)}(\eta_l)^* u^m : \right. \\ &\, \left. \xi_r, \eta_s \in H, a \in A, i(r), i'(s) \in \mathbb{Z}, 1 \leq r \leq k, 1 \leq s \leq l, \text{ and } j, k, l, m \in \mathbb{Z} \right\}.
\end{align*}
Now suppose $k, l \in \mathbb{Z}$ and $\xi, \eta \in H$.  Then
\[ \tau_k(\xi) \tau_l(\eta) = \tau_k(\xi) p_k p_{l - 1} \tau_l(\eta) = \begin{cases} \tau_{l-1}(\xi) \tau_l(\eta) & k = l - 1 \\ 0 & \text{else}. \end{cases} \]
Similarly,
\[ \tau_k(\xi) \pi_l(a) = \begin{cases} \tau_l(\xi a) & k = l \\ 0 & \text{else}. \end{cases} \qquad \text{and} \qquad
   \pi_k(a) \tau_l(\xi) = \begin{cases} \tau_{l-1}(a \xi) & k = l - 1 \\ 0 & \text{else}. \end{cases} \]
Now we have
\[ j_n(1)Bj_n(1) =\overline{\operatorname{span}}\,\{\tau_{n+1}(\xi_{n+1}) \cdots \tau_k(\xi_k) \pi_k(a) \tau_k(\eta_k)^* \cdots \tau_{m+1}(\eta_{m+1})^* u^{n-m} \}.\]
The proposition is now easily verified.  In particular, the last statement follows since $\mathcal{O}_A(H)^\gamma$ is the closed span of
\[ \{ \tau(\xi_n) \cdots \tau(\xi_1) \pi(a) \tau(\eta_1)^* \cdots \tau(\eta_n)^* : \xi_i, \eta_i \in H, a \in A, \, n \in \mathbb{Z} \}. \]
\end{proof}

By the previous proposition, it only remains to show $\mathcal{O}_A(H)^\gamma$ is AF whenever $A$ is AF.  To show this we recall a construction of Katsura which gives an alternate description of $\mathcal{O}_A(H)^\gamma$.

\begin{proposition}
There is a unique injective *\=/homomorphism $\varphi^n : \mathbb{K}(H^{\otimes n}) \rightarrow \mathcal{O}_A(H)$ such that if
\[ \xi = \xi_1 \otimes \cdots \otimes \xi_n \qquad \text{and} \qquad \eta = \eta_1 \otimes \cdots \eta_n \]
with $\xi_i, \eta_i \in H$, then
\[ \varphi^n(\theta_{\xi, \eta}) = \tau(\xi_1) \cdots \tau(\xi_n) \tau(\eta_n)^* \cdots \tau(\eta_1)^*. \]
In particular, $\varphi^0 = \pi$ and $\varphi^1= \varphi$.
\end{proposition}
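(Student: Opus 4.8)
The plan is to construct $\varphi^n$ directly on finite-rank operators and then extend by continuity, mirroring the construction of $\varphi = \varphi^1$ in Proposition \ref{prop:InducedMapOnCompacts}. The key observation is that $\mathcal{O}_A(H)$ carries a covariant Toeplitz representation $(\pi, \tau)$, and by the tensor power construction we obtain an induced representation of the correspondence $H^{\otimes n}$. First I would define a map $\tau^{(n)} : H^{\otimes n} \to \mathcal{O}_A(H)$ on elementary tensors by $\tau^{(n)}(\xi_1 \otimes \cdots \otimes \xi_n) = \tau(\xi_1) \cdots \tau(\xi_n)$, and verify that $\tau^{(n)}$ is well-defined and contractive. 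Well-definedness is the only subtle point here: the map on the algebraic tensor product $H^{\odot n}$ respects the balancing relations $\xi a \otimes \eta = \xi \otimes a\eta$ precisely because $\tau(\xi a) = \tau(\xi)\pi(a)$ and $\pi(a)\tau(\eta) = \tau(a\eta)$, which are exactly the Toeplitz relations. To see that $\tau^{(n)}$ extends to the completion $H^{\otimes n}$, I would compute
\[
\tau^{(n)}(\xi)^* \tau^{(n)}(\eta) = \pi\big(\langle \xi, \eta \rangle_{H^{\otimes n}}\big)
\]
for elementary tensors $\xi, \eta$, using the inductive formula for the inner product on $H^{\otimes n}$ together with the relation $\tau(\xi)^*\tau(\eta) = \pi(\langle \xi, \eta\rangle)$ repeatedly; this shows $(\pi, \tau^{(n)})$ is itself a Toeplitz representation of $(A, H^{\otimes n})$, so in particular $\tau^{(n)}$ is contractive and extends to $H^{\otimes n}$.

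Once $(\pi, \tau^{(n)})$ is known to be a Toeplitz representation, Proposition \ref{prop:InducedMapOnCompacts} applies verbatim with $H$ replaced by $H^{\otimes n}$: it yields a unique $*$-homomorphism $\varphi^n : \mathbb{K}(H^{\otimes n}) \to \mathcal{O}_A(H)$ with $\varphi^n(\theta_{\xi,\eta}) = \tau^{(n)}(\xi)\,\tau^{(n)}(\eta)^*$. Unwinding the definition of $\tau^{(n)}$ on elementary tensors recovers exactly the claimed formula
\[
\varphi^n(\theta_{\xi,\eta}) = \tau(\xi_1) \cdots \tau(\xi_n)\, \tau(\eta_n)^* \cdots \tau(\eta_1)^*.
\]
The cases $n = 0$ and $n = 1$ are immediate from the conventions $H^{\otimes 0} = A$ (so $\mathbb{K}(H^{\otimes 0}) = \mathbb{K}(A) = A$ and $\varphi^0 = \pi$) and $\tau^{(1)} = \tau$ (so $\varphi^1 = \varphi$).

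For injectivity I would invoke the injectivity clause of Proposition \ref{prop:InducedMapOnCompacts}: since the universal covariant representation has $\pi$ injective, and $H^{\otimes n}$ is a Hilbert $A$-module over which $(\pi, \tau^{(n)})$ is a Toeplitz representation with $\pi$ injective, the induced map $\varphi^n$ on $\mathbb{K}(H^{\otimes n})$ is injective. Uniqueness is automatic because the finite-rank operators $\theta_{\xi,\eta}$ with $\xi, \eta$ elementary tensors span a dense subset of $\mathbb{K}(H^{\otimes n})$, and any $*$-homomorphism agreeing with $\varphi^n$ on them must agree everywhere by continuity. The main obstacle is the verification that $\tau^{(n)}$ is well-defined and contractive on the completed tensor product—i.e., that $(\pi, \tau^{(n)})$ genuinely satisfies the Toeplitz relations—since everything else reduces to a direct application of Proposition \ref{prop:InducedMapOnCompacts}; but this verification is a routine (if slightly tedious) induction on $n$ using the balancing relations and the inner product formula on $H^{\otimes n}$.
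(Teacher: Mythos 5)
Your argument is correct and is the standard one: the paper itself states this proposition without proof (it is quoted from Katsura's construction), and the natural route is exactly what you do — check that $\tau^{(n)}(\xi_1\otimes\cdots\otimes\xi_n)=\tau(\xi_1)\cdots\tau(\xi_n)$ gives a Toeplitz representation $(\pi,\tau^{(n)})$ of $(A,H^{\otimes n})$ via the norm identity $\tau^{(n)}(x)^*\tau^{(n)}(x)=\pi(\langle x,x\rangle)$ on the algebraic tensor product, and then apply Proposition \ref{prop:InducedMapOnCompacts} with $H$ replaced by $H^{\otimes n}$, using injectivity of $\pi$ in the universal covariant representation. No gaps; the density of $\{\theta_{\xi,\eta}:\xi,\eta\text{ elementary}\}$ settles uniqueness as you say.
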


Set $A_n = \varphi^n(\mathbb{K}(H^{\otimes n})) \subseteq \mathcal{O}_A(H)$ for each $n \geq 0$.  In particular, note that $A_n$ is Morita equivalent to the ideal $\langle H^{\otimes n}, H^{\otimes n} \rangle$ of $A$.  Set $A_{[0, n]} = A_0 + \cdots + A_n$.

\begin{lemma}
For each $n \geq 0$, $A_{[0, n]}$ is a $C^*$\=/algebra and $A_n$ is an ideal of $A_{[0, n]}$.  Moreover, there is a short exact sequence
\[ \begin{tikzcd} 0 \arrow{r} & A_n \arrow{r} & A_{[0, n]} \arrow{r} & A_{[0, n-1]} / (A_n \cap A_{[0, n-1]}) \arrow{r} & 0. \end{tikzcd} \]
\end{lemma}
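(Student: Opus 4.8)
The plan is to prove all three assertions simultaneously by induction on $n$, the crucial computational input being a multiplication rule relating the cores $A_m$ for different $m$, and the crucial difficulty being that a sum of $C^*$\=/subalgebras need not be closed. First I would record the multiplication rule. For $m \leq k$, the identification $H^{\otimes k} = H^{\otimes m} \otimes_A H^{\otimes(k-m)}$ lets us extend each $S \in \mathbb{K}(H^{\otimes m})$ to $S \otimes 1 \in \mathbb{B}(H^{\otimes k})$. A computation on rank\=/one operators, using $\pi(a)\tau(\xi) = \tau(a\xi)$, $\tau(\xi)^*\tau(\eta) = \pi(\langle\xi,\eta\rangle)$, and the balancing relation $\zeta a \otimes \omega = \zeta \otimes a\omega$ in the tensor product over $A$, yields
\[
  \varphi^m(S)\varphi^k(T) = \varphi^k\bigl((S\otimes 1)T\bigr),
  \qquad
  \varphi^k(T)\varphi^m(S) = \varphi^k\bigl(T(S\otimes 1)\bigr)
\]
for $S \in \mathbb{K}(H^{\otimes m})$, $T \in \mathbb{K}(H^{\otimes k})$, $m \leq k$; since $\mathbb{K}(H^{\otimes k})$ is an ideal of $\mathbb{B}(H^{\otimes k})$, the right\=/hand sides lie in $A_k$. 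Hence $A_m A_k, A_k A_m \subseteq A_{\max(m,k)}$. In particular each $A_{[0,n]}$ is a self\=/adjoint, multiplicatively closed subspace (each $A_k$ is self\=/adjoint as the image of the $C^*$\=/algebra $\mathbb{K}(H^{\otimes k})$ under the $*$\=/homomorphism $\varphi^k$), and $A_n$ absorbs $A_{[0,n-1]}$ on both sides: $A_{[0,n-1]}A_n \subseteq A_n$ and $A_n A_{[0,n-1]} \subseteq A_n$.

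Next I would run the induction. The base case $n = 0$ is immediate, since $A_{[0,0]} = A_0 = \pi(A)$ is the image of a $*$\=/homomorphism and hence a $C^*$\=/algebra. Assume $A_{[0,n-1]}$ is a $C^*$\=/algebra. Each $A_k$ is a $C^*$\=/algebra (a closed $*$\=/homomorphic image), so $A_n$ is closed, and by the multiplication rule together with continuity, $A_n$ is a closed two\=/sided ideal of the $C^*$\=/algebra $C := \overline{A_{[0,n]}}$. Let $q : C \to C/A_n$ be the quotient map. Since $A_{[0,n-1]}$ is a $C^*$\=/algebra, its image $q(A_{[0,n-1]})$ is a $C^*$\=/subalgebra, hence closed, in $C/A_n$; and because $q$ kills $A_n$ we have $q(A_{[0,n]}) = q(A_{[0,n-1]})$, which is dense in $C/A_n$. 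A closed dense subalgebra is everything, so $q(A_{[0,n-1]}) = C/A_n$.

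From this surjectivity I would deduce both closedness and the exact sequence. Given $x \in C$, choose $y \in A_{[0,n-1]}$ with $q(y) = q(x)$; then $x - y \in A_n$, so $x \in A_n + A_{[0,n-1]} = A_{[0,n]}$. Thus $C = A_{[0,n]}$, proving that $A_{[0,n]}$ is a $C^*$\=/algebra in which $A_n$ is a closed ideal. Finally, the restriction $q|_{A_{[0,n-1]}}$ has kernel $A_n \cap A_{[0,n-1]}$ and range all of $A_{[0,n]}/A_n$, giving the isomorphism $A_{[0,n]}/A_n \cong A_{[0,n-1]}/(A_n \cap A_{[0,n-1]})$ and hence the stated short exact sequence.

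The main obstacle throughout is that $A_0 + \cdots + A_n$ is a priori only an incomplete $*$\=/subalgebra, and sums of $C^*$\=/subalgebras are generally not closed. The device that removes this difficulty is to recognize $A_n$ as a closed ideal inside the closure $C$ and to exploit that homomorphic images of $C^*$\=/algebras are closed; this forces the dense image of $A_{[0,n-1]}$ to fill the quotient, which in turn forces the sum $A_{[0,n]}$ to already be complete. I would cite the relevant relations from \cite{KatsuraCorr1} rather than reprove the rank\=/one multiplication formula in full, since that identity is routine.
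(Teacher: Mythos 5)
Your argument is correct and is essentially the intended one: the paper states this lemma without proof as part of recalling Katsura's description of the core, and the proof there (see Section 5 of \cite{KatsuraCorr2}) is exactly your induction --- the multiplication rule $A_m A_k \subseteq A_{\max(m,k)}$ coming from $\varphi^m(S)\varphi^k(T) = \varphi^k((S\otimes 1)T)$, combined with the fact that the sum of a $C^*$\=/subalgebra and a closed ideal of the ambient closure is automatically closed. Nothing to add.
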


\begin{theorem}
There is an isomorphism $\displaystyle \mathcal{O}_A(H)^\gamma \cong A_{[0, \infty]} := \underset{\longrightarrow}{\lim} \, A_{[0, n]}$.
\end{theorem}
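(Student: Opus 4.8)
The plan is to realize $A_{[0,\infty]}$ concretely as the norm closure $\overline{\bigcup_{n \geq 0} A_{[0,n]}}$ inside $\mathcal{O}_A(H)$ and then to identify this closure with the fixed point algebra. Since the connecting maps in the system $(A_{[0,n]})$ are the inclusions $A_{[0,n]} \hookrightarrow A_{[0,n+1]}$, which are injective $*$\=/homomorphisms into the fixed ambient algebra $\mathcal{O}_A(H)$, the inductive limit $\underset{\longrightarrow}{\lim}\, A_{[0,n]}$ is canonically isomorphic to $\overline{\bigcup_n A_{[0,n]}}$. Thus it suffices to establish the equality $\mathcal{O}_A(H)^\gamma = \overline{\bigcup_n A_{[0,n]}}$, and the task splits into two inclusions.

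One inclusion is immediate. Each generator $\varphi^n(\theta_{\xi,\eta}) = \tau(\xi_1)\cdots\tau(\xi_n)\tau(\eta_n)^* \cdots \tau(\eta_1)^*$ of $A_n$ is a monomial of gauge degree $n - n = 0$, so $\gamma_z$ fixes it for every $z \in \mathbb{T}$; hence $A_n \subseteq \mathcal{O}_A(H)^\gamma$, and since the fixed point algebra is closed, $\overline{\bigcup_n A_{[0,n]}} \subseteq \mathcal{O}_A(H)^\gamma$.

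For the reverse inclusion I would use the faithful conditional expectation $E : \mathcal{O}_A(H) \to \mathcal{O}_A(H)^\gamma$ given by $E(x) = \int_{\mathbb{T}} \gamma_z(x)\, dz$; since $\mathbb{T}$ is compact, $E$ is a contractive projection with range exactly $\mathcal{O}_A(H)^\gamma$. Recall that $\mathcal{O}_A(H)$ is the closed span of the monomials $\tau(\xi_1)\cdots\tau(\xi_k)\pi(a)\tau(\eta_l)^*\cdots\tau(\eta_1)^*$, and that $\gamma_z$ scales such a monomial by $z^{k - l}$; hence $E$ annihilates it when $k \neq l$ and fixes it when $k = l$. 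By linearity and continuity of $E$ it follows that $\mathcal{O}_A(H)^\gamma = E(\mathcal{O}_A(H))$ is the closed span of the degree\=/zero monomials, those with $k = l$. It then remains to check that each such monomial lies in some $A_k$. Using $\tau(\xi)\pi(a) = \tau(\xi a)$ and $\pi(a)\tau(\eta)^* = \tau(\eta a^*)^*$ I would absorb the central coefficient $\pi(a)$ into the first annihilation operator, rewriting the monomial as $\tau(\xi_1)\cdots\tau(\xi_k)\tau(\eta_k a^*)^*\cdots\tau(\eta_1)^* = \varphi^k(\theta_{\xi_1 \otimes \cdots \otimes \xi_k,\, \eta_1 \otimes \cdots \otimes \eta_{k-1}\otimes \eta_k a^*}) \in A_k \subseteq A_{[0,k]}$. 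This shows $\mathcal{O}_A(H)^\gamma \subseteq \overline{\bigcup_n A_{[0,n]}}$ and completes the proof.

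The gauge\=/degree bookkeeping and the existence and range of $E$ are routine. The one step requiring genuine care is the last one: the degree\=/zero monomials are a priori only known to densely span $\mathcal{O}_A(H)^\gamma$, and one must see that each of them is realized inside a single $A_k$ through the map $\varphi^k$. This is where the absorption of the middle factor $\pi(a)$ and the explicit form of $\varphi^k$ on the rank\=/one operators $\theta_{\xi,\eta}$ are essential, since this is precisely the link connecting the abstract fixed point algebra to the concrete inductive system $(A_{[0,n]})$.
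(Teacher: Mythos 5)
The paper does not prove this statement; it is recalled from Katsura's work (Section 5 of \cite{KatsuraCorr2}) and used as a black box, so there is no in-paper argument to compare against. Your proof is correct and is essentially the standard argument for this fact: averaging over the gauge action gives the conditional expectation onto the core, which identifies $\mathcal{O}_A(H)^\gamma$ with the closed span of the degree-zero monomials, and absorbing the middle factor via $\pi(a)\tau(\eta_k)^* = \tau(\eta_k a^*)^*$ places each such monomial in $A_k$; together with the observation that each $A_n$ is gauge-fixed and that an inductive limit along inclusions is the closure of the union, this gives both inclusions. The only detail worth writing out explicitly is the $k=0$ case, where one uses the identification $\mathbb{K}(A)\cong A$ and $\varphi^0 = \pi$ to see that the monomials $\pi(a)$ lie in $A_0$.
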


Recall that the class of AF\=/algebras is closed under Morita equivalences, ideals, quotients, extensions, and direct limits.  The following corollary is an immediate consequence of these facts together with the previous theorem and the construction of $A_{[0, \infty]}$.

\begin{corollary}\label{cor:AFCore}
If $A$ is AF and $H$ is separable, then $\mathcal{O}_A(H)^\gamma$ is AF.
\end{corollary}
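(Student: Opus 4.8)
The plan is to run an induction on $n$ using the short exact sequence from the preceding lemma together with the recalled permanence properties of the class of AF-algebras. First I would establish the building blocks. For each $n \geq 0$, the algebra $A_n$ is Morita equivalent to the ideal $\langle H^{\otimes n}, H^{\otimes n} \rangle$ of $A$. Since $A$ is AF and ideals of AF-algebras are AF, this ideal is AF. Moreover, because $H$ is separable, so is $H^{\otimes n}$, hence so are $\mathbb{K}(H^{\otimes n})$ and its image $A_n$. Thus $A_n$ is a separable $C^*$-algebra Morita equivalent to an AF-algebra, and therefore $A_n$ is AF. This is the one place the separability hypothesis on $H$ is genuinely used: it keeps every algebra in sight separable, so that closure of the (separable) AF class under Morita equivalence applies.

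Next I would show by induction on $n$ that each $A_{[0, n]}$ is AF. The base case $n = 0$ is precisely the assertion that $A_0$ is AF, established above. For the inductive step, assume $A_{[0, n-1]}$ is AF. The lemma supplies the short exact sequence
\[ 0 \longrightarrow A_n \longrightarrow A_{[0, n]} \longrightarrow A_{[0, n-1]} / (A_n \cap A_{[0, n-1]}) \longrightarrow 0. \]
The quotient $A_{[0, n-1]} / (A_n \cap A_{[0, n-1]})$ is AF, being a quotient of the AF-algebra $A_{[0, n-1]}$, and $A_n$ is AF by the previous paragraph. Since the class of AF-algebras is closed under extensions, $A_{[0, n]}$ is AF, completing the induction.

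Finally, the preceding theorem identifies $\mathcal{O}_A(H)^\gamma$ with $A_{[0, \infty]} = \underset{\longrightarrow}{\lim} \, A_{[0, n]}$, a direct limit of the AF-algebras $A_{[0, n]}$. As direct limits of AF-algebras are AF, it follows that $\mathcal{O}_A(H)^\gamma$ is AF.

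There is no real obstacle here: every ingredient is either one of the recalled permanence properties (Morita equivalence, ideals, quotients, extensions, direct limits) or the exact sequence of the lemma, and the corollary really is immediate once that machinery is in place. The only step needing any care is the bookkeeping with separability, to be sure that each intermediate algebra is separable so that the permanence properties — as stated for separable AF-algebras — genuinely apply.
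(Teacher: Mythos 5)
Your proof is correct and is exactly the argument the paper intends: the paper simply declares the corollary "an immediate consequence" of the permanence properties of AF-algebras (Morita equivalence, ideals, quotients, extensions, direct limits) together with the identification $\mathcal{O}_A(H)^\gamma \cong \underset{\longrightarrow}{\lim}\, A_{[0,n]}$ and the exact sequence for $A_{[0,n]}$, and you have merely written out the induction and the separability bookkeeping explicitly.
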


This completes the proof of Theorem B.

\section{Applications to AF--Embeddability}\label{sec:BrownsTheorem}

In \cite{BrownAFE}, Brown considered the finiteness of algebras of the form $A \rtimes \mathbb{Z}$ where $A$ is an AF\=/algebra.  In particular, Brown proves the following theorem.

\begin{theorem}[Brown's Embedding Thoerem]\label{thm:BrownsEmbedding}
If $A$ is an AF\=/algebra and $\alpha \in \operatorname{Aut}(A)$, then the following are equivalent:
\begin{enumerate}
  \item $A \rtimes_\alpha \mathbb{Z}$ is AF\=/embeddable;
  \item $A \rtimes_\alpha \mathbb{Z}$ is quasidiagonal;
  \item $A \rtimes_\alpha \mathbb{Z}$ is stably finite;
  \item if $x \in K_0(A)$ and $[\alpha](x) \leq x$, then $[\alpha](x) = x$.
\end{enumerate}
\end{theorem}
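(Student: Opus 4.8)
I would prove the four conditions equivalent by closing the cycle $(1) \Rightarrow (2) \Rightarrow (3) \Rightarrow (4) \Rightarrow (1)$. The implications $(1) \Rightarrow (2)$ and $(2) \Rightarrow (3)$ are general facts about quasidiagonality that have nothing to do with the crossed\=/product structure, and $(3) \Rightarrow (4)$ is a short argument combining stable finiteness with the behaviour of the canonical inclusion on $K$\=/theory. The real content is the implication $(4) \Rightarrow (1)$, which is where essentially all of the difficulty lies and which is the substance of Brown's construction.

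For $(1) \Rightarrow (2)$, every AF\=/algebra is quasidiagonal and quasidiagonality is inherited by $C^*$\=/subalgebras, so any algebra that embeds into an AF\=/algebra is quasidiagonal. For $(2) \Rightarrow (3)$, a quasidiagonal $C^*$\=/algebra is stably finite: finite\=/dimensional approximations send an isometry to approximate isometries in matrix algebras, where isometries are unitaries, so no proper isometry can exist, and since all matrix amplifications of a quasidiagonal algebra are again quasidiagonal the same holds stably.

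For $(3) \Rightarrow (4)$, suppose $x \in K_0(A)$ with $[\alpha](x) \leq x$, and set $y = x - [\alpha](x) \geq 0$. Since $A$ is AF, the class $y$ is represented by an actual projection $p$ over $A$. Write $\iota : A \hookrightarrow A \rtimes_\alpha \mathbb{Z}$ for the canonical inclusion and $u$ for the implementing unitary, so that $\iota \circ \alpha = \operatorname{Ad}(u) \circ \iota$; as conjugation by a unitary acts trivially on $K$\=/theory, this gives $\iota_* \circ [\alpha] = \iota_*$, and hence $\iota_*(y) = \iota_*(x) - \iota_*([\alpha](x)) = 0$. Thus $[\iota(p)] = 0$ in $K_0(A \rtimes_\alpha \mathbb{Z})$, so $\iota(p) \oplus 1_k$ is Murray--von Neumann equivalent to its subprojection $1_k$ for some $k$. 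Stable finiteness of $A \rtimes_\alpha \mathbb{Z}$ forces $\iota(p) \oplus 1_k = 1_k$, i.e.\ $\iota(p) = 0$; since $\iota$ is injective, $p = 0$ and so $y = 0$, which is exactly $[\alpha](x) = x$.

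The main obstacle is $(4) \Rightarrow (1)$, which I would not expect to establish cheaply; it is the heart of Brown's theorem. The plan is to build an injective $*$\=/homomorphism of $A \rtimes_\alpha \mathbb{Z}$ into an AF\=/algebra by an inductive approximation scheme: writing $A = \varinjlim A_m$ with each $A_m$ finite dimensional, one approximates the implementing unitary by partial isometries assembled from the finite\=/dimensional building blocks and splices the resulting finite\=/dimensional models together using a Berg\=/type rotation, so that the connecting maps are asymptotically multiplicative and isometric. The function of hypothesis $(4)$ is precisely to remove the obstruction to this construction: the requirement that $[\alpha]$ contract no positive element prevents the dimension range from being compressed, which is exactly the phenomenon that would otherwise produce a proper isometry and block the embedding. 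Verifying that these approximations cohere into a genuine AF\=/embedding is the technically demanding step, and accounts for the bulk of the work.
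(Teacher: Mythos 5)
The paper offers no proof of this statement to compare yours against: it is imported verbatim from \cite{BrownAFE} and used as a black box, so the only question is whether your proposal stands on its own. The cycle $(1)\Rightarrow(2)\Rightarrow(3)\Rightarrow(4)\Rightarrow(1)$ is the right organization, and the first three legs are correct. $(1)\Rightarrow(2)$ and $(2)\Rightarrow(3)$ are the standard permanence facts you state (quasidiagonality passes to subalgebras; asymptotically multiplicative, asymptotically isometric maps into matrix algebras turn an isometry into an approximate unitary and so annihilate $1-vv^*$, and matrix amplifications remain quasidiagonal). Your $(3)\Rightarrow(4)$ is also sound: $\iota\circ\alpha=\operatorname{Ad}(u)\circ\iota$ gives $\iota_*\circ[\alpha]=\iota_*$ on $K_0$, the positive class $x-[\alpha](x)$ is realized by an honest projection $p$ over the AF algebra $A$, $[\iota(p)]=0$ yields $\iota(p)\oplus 1_k\sim 1_k$ for some $k$, and stable finiteness of $A\rtimes_\alpha\mathbb{Z}$ then forces $p=0$.

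The gap is $(4)\Rightarrow(1)$, and you have correctly located it but not closed it. What you give there is a description of the shape of Brown's construction, not an argument: nothing in the proposal explains how hypothesis $(4)$ is actually converted into the partial isometries that approximate $u$, what the Berg rotation is applied to, why the spliced finite-dimensional models admit connecting maps that are genuinely multiplicative and injective in the limit, or why the resulting inductive limit receives a copy of all of $A\rtimes_\alpha\mathbb{Z}$ rather than only of $A$. This implication is essentially the entire content of \cite{BrownAFE}; it requires a substantial intermediate reduction (using $(4)$ and the dimension-group structure of $K_0(A)$ to replace $\alpha$, up to cocycle perturbation, by automorphisms compatible with a telescoped Bratteli diagram) before the Berg-type splicing can even begin. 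If your intention is to cite Brown for this direction, as the paper itself does, the proposal is acceptable; as a self-contained proof it is incomplete exactly where all of the difficulty is concentrated.
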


Using Brown's Embedding Theorem together with Theorems A and B, we can prove Theorem C.

\begin{proof}[Proof of Theorem C]
By Theorem B, $\mathcal{O}_A(H) \rtimes_\gamma \mathbb{T}$ is AF and by Takai duality, $\mathcal{O}_A(H)$ is Morita equivalent to $(\mathcal{O}_A(H) \rtimes_\gamma \mathbb{T}) \rtimes_{\hat{\gamma}} \mathbb{Z}$.  Hence the equivalence of the first three conditions follows from Brown's Embedding Theorem.

Now suppose $H$ is row--finite and faithful.  By Theorem A, there is an isomorphism
\[ K_0(\mathcal{O}_A(H) \rtimes_\gamma \mathbb{T}) \cong \underset{\longrightarrow}{\lim} \, (K_0(A), [H]) \]
of ordered groups that intertwines the automorphism $[\hat{\gamma}]$ of $K_0(\mathcal{O}_A(H) \rtimes_\gamma \mathbb{T})$ and the automorphism of $\underset{\longrightarrow}{\lim} \, (K_0(A), [H])$ induced by $[H]$.  Let $j$ be the composition
\[ \begin{tikzcd} K_0(A) \arrow{r} & \underset{\longrightarrow}{\lim} \, (K_0(A), [H]) \arrow{r}{\cong} & K_0(\mathcal{O}_A(H) \rtimes_\gamma \mathbb{T}) \end{tikzcd} \]
Then we have $j \circ [H] = [\hat{\gamma}] \circ j$.

Suppose $\mathcal{O}_A(H)$ is AF\=/embeddable and $x \in K_0(A)$ with $[H](x) \leq x$.  Then
\[ [\hat{\gamma}](j(x)) = j([H](x)) \leq j(x) \]
and therefore Brown's Embedding Theorem implies
\[ j([H](x)) = [\hat{\gamma}](j(x)) = j(x). \]
It follows that $[H]^{n}(x - [H](x)) = 0$ for some sufficiently large integer $n$.  Since $[H]$ is faithful and $x - [H](x) \geq 0$, Proposition \ref{prop:FaithfulKTheory} implies $[H]x = x$.

Conversely, suppose condition (4) in the theorem holds and $x \in K_0(\mathcal{O}_A(H) \rtimes_\gamma \mathbb{T})$ such that $\hat{\gamma}(x) \leq x$.  Then there is a $y \in K_0(A)$ such that $j(y) = x$.  Since
\[ j([H](y)) = [\hat{\gamma}](j(y)) = [\hat{\gamma}](x) \leq x = j(y), \]
there is an integer $n$ such that $[H]^{n+1}(y) \leq [H]^n(y)$.  Set $z = [H]^n(y) \in K_0(A)$.  Then $j(z) = x$ and $[H](z) \leq z$.  Hence we have $[H](z) = z$ and
\[ [\hat{\gamma}](x) = [\hat{\gamma}](j(z)) = j([H](z)) = j(z) = x. \]
Thus Brown's Embedding Theorem implies $\mathcal{O}_A(H)$ is AF\=/embeddable.
\end{proof}

It was also shown in \cite{Schafhauser1} and \cite{Schafhauser2} that for many topological graph algebras (see \cite{KatsuraTGA} for a definition of topological graph algebras), stably finite implies AF\=/embeddable.  For instance, this holds for $C^*(E)$ where either $E$ is a discrete graph or $E$ is a compact graph with no sinks.  In light of Theorem C, this also holds when $E$ is a totally disconnected topological graph (or a topological quiver in the sense of \cite{MuhlyTomfordeQuivers}).

\begin{corollary}\label{cor:AFEmbeddingQuiver}
If $E$ is a second countable, totally disconnected, topological quiver, then the following are equivalent:
\begin{enumerate}
  \item $C^*(E)$ is AF\=/embeddable;
  \item $C^*(E)$ is quasidiagonal;
  \item $C^*(E)$ is stably finite.
\end{enumerate}
\end{corollary}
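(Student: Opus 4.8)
The plan is to realize $C^*(E)$ as a Cuntz--Pimsner algebra $\mathcal{O}_A(H)$ and then apply Theorem C directly. Recall from the work of Muhly and Tomforde in \cite{MuhlyTomfordeQuivers} that a topological quiver $E = (E^0, E^1, r, s, \lambda)$ determines a $C^*$\=/correspondence $H$ over the commutative $C^*$\=/algebra $A = C_0(E^0)$: the module $H$ is the completion of $C_c(E^1)$ under the $A$\=/valued inner product assembled from the family of measures $\lambda = \{\lambda_v\}$, with the left and right actions of $C_0(E^0)$ implemented via the range and source maps, and one has a canonical isomorphism $\mathcal{O}_A(H) \cong C^*(E)$. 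Thus the content of the corollary will reduce entirely to checking that $(A, H)$ satisfies the hypotheses of Theorem C.

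The next task is to match the two hypotheses on $E$ with those hypotheses. Since $E$ is second countable, both $A = C_0(E^0)$ and the correspondence $H$ built from $C_c(E^1)$ are separable. The key point is that total disconnectedness of $E^0$ is exactly what forces $A$ to be AF: for a second countable, locally compact, Hausdorff space $X$, the algebra $C_0(X)$ is AF precisely when $X$ is totally disconnected, since in that case $X$ has a basis of compact open sets and $C_0(X)$ is realized as an inductive limit of finite direct sums of copies of $\mathbb{C}$ indexed by finite clopen partitions of compact subsets exhausting $X$. Hence $A$ is a (separable) AF\=/algebra and $H$ is a separable $C^*$\=/correspondence over $A$.

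With these identifications, Theorem C applies and yields the equivalence of conditions (1)--(3); note that no regularity of $H$ is needed, as row\=/finiteness and faithfulness enter Theorem C only for its fourth, $K$\=/theoretic condition, which does not appear here. The main obstacle is therefore not in the equivalence itself but in the two preliminary identifications: first, invoking the standard isomorphism $\mathcal{O}_{C_0(E^0)}(H) \cong C^*(E)$ and verifying separability of $H$, and second, the commutative AF\=/algebra fact above. Neither presents genuine difficulty, so the real work of the corollary is already carried by Theorem C.
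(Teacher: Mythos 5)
Your proposal is correct and matches the argument the paper intends: realize $C^*(E)$ as $\mathcal{O}_{C_0(E^0)}(H)$ via the Muhly--Tomforde correspondence, use second countability for separability and total disconnectedness of $E^0$ to see $C_0(E^0)$ is AF, and apply the equivalence of the first three conditions of Theorem C (which needs no regularity of $H$). The paper gives no separate proof of this corollary, deriving it exactly ``in light of Theorem C'' as you do.
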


\begin{remark}
In the case $E$ is a discrete graph or $E$ is a compact topological graph with no sinks, the AF\=/embeddability of $C^*(E)$ can be characterized in terms of the structure of $E$.  It would be interesting to find a condition on a totally disconnected topological quiver $E$ which characterizes the AF\=/embeddability of $C^*(E)$; in fact this was part of the motivation for this projection.  It seems likely that if $E$ is a row--finite, totally disconnected, topological graph with no sources, then one relate the $K$\=/theoretic characterization of AF\=/embeddability in Theorem C to the combinatorial structure of the underlying graph $E$.
\end{remark}

Combining our main result with the work of Exel on partial crossed products in \cite{ExelCircleActions}, we obtain another application to AF\=/embeddability.  Given a $C^*$\=/algebra $A$ and a strongly continuous action $\gamma : \mathbb{T} \rightarrow \operatorname{Aut}(A)$, define
\[A_n := \{ a \in A : \gamma_z(a) = z^n a \text{ for every } z \in \mathbb{T} \}. \]
We say $\gamma$ is \emph{semi\-/saturated} if $A$ is generated as a $C^*$\=/algebra by $A_0$ and $A_1$.

\begin{corollary}\label{cor:CircleActions}
Suppose $A$ is a separable $C^*$\=/algebra and $\gamma: \mathbb{T} \rightarrow \operatorname{Aut}(A)$ is a strongly continuous, semi\-/saturated action on $A$.  If $A^\gamma$ is AF, then $A \rtimes_\gamma \mathbb{T}$ is AF and the following are equivalent:
\begin{enumerate}
  \item $A$ is AF--embeddable;
  \item $A$ is quasidiagonal;
  \item $A$ is stably finite.
\end{enumerate}
\end{corollary}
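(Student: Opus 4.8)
The plan is to identify $A$, together with its semi-saturated action $\gamma$, as a Cuntz--Pimsner algebra over its fixed-point algebra, and then to apply Theorems B and C. Write $A_0 = A^\gamma$ and let $A_1$ be the first spectral subspace defined above. For any $\xi, \eta \in A_1$ one has $\xi^* \eta \in A_0$ and $\xi \eta^* \in A_0$, so $A_1$ is a $C^*$\=/correspondence over $A_0$ with right inner product $\langle \xi, \eta \rangle = \xi^* \eta$ and left action given by multiplication in $A$; by Exel's analysis of semi\-/saturated circle actions in \cite{ExelCircleActions}, $A_1$ is in fact a Hilbert $A_0$\=/bimodule. First I would verify that the inclusions $\iota_0 : A_0 \hookrightarrow A$ and $\iota_1 : A_1 \hookrightarrow A$ constitute a covariant Toeplitz representation $(\iota_0, \iota_1) : (A_0, A_1) \to A$. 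The Toeplitz relations $\iota_0(a)\iota_1(\xi) = \iota_1(a\xi)$ and $\iota_1(\xi)^* \iota_1(\eta) = \iota_0(\langle \xi, \eta \rangle)$ are immediate from the multiplication in $A$, while covariance over the Katsura ideal $J_{A_1}$ follows from the Hilbert\=/bimodule structure, this being the step that relies on \cite{ExelCircleActions}.

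Granting covariance, let $\Phi : \mathcal{O}_{A_0}(A_1) \to A$ be the induced $*$\=/homomorphism. Since $\iota_0$ is injective and the representation admits the gauge action furnished by $\gamma$ itself, the Gauge\=/Invariant Uniqueness Theorem (Theorem \ref{thm:GIUT}) shows $\Phi$ is injective; surjectivity of $\Phi$ is precisely the semi\-/saturation hypothesis that $A$ is generated by $A_0$ and $A_1$. Finally, because $\gamma_z$ fixes $A_0$ pointwise and scales $A_1$ by $z$, the isomorphism $\Phi$ intertwines the gauge action on $\mathcal{O}_{A_0}(A_1)$ with $\gamma$, whence $A \rtimes_\gamma \mathbb{T} \cong \mathcal{O}_{A_0}(A_1) \rtimes_\gamma \mathbb{T}$.

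With this equivariant identification the corollary follows quickly. As $A$ is separable, $A_1$ is a separable $C^*$\=/correspondence over the AF\=/algebra $A_0 = A^\gamma$, so Theorem B gives that $A \rtimes_\gamma \mathbb{T} \cong \mathcal{O}_{A_0}(A_1) \rtimes_\gamma \mathbb{T}$ is AF, and the equivalence of conditions (1)--(3) for $A \cong \mathcal{O}_{A_0}(A_1)$ is exactly the first part of Theorem C.

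I expect the main obstacle to be the covariance of $(\iota_0, \iota_1)$ over $J_{A_1}$, that is, matching $\iota_0(a)$ with $\varphi(\lambda(a))$ for $a \in J_{A_1}$; this is the one point that genuinely requires Exel's structure theory for the Hilbert bimodule $A_1$, and once it is in place the remaining steps are bookkeeping together with the two main theorems.
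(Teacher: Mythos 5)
Your argument is correct, but it follows a genuinely different route from the paper's. You realize $A$ directly as $\mathcal{O}_{A^\gamma}(A_1)$, where $A_1$ is the first spectral subspace regarded as a Hilbert $A^\gamma$\=/bimodule, getting injectivity of $\Phi$ from Theorem \ref{thm:GIUT} (with $\gamma$ itself as the gauge action) and surjectivity from semi\-/saturation. The paper instead first replaces $(A,\gamma)$ by $(A\otimes\mathbb{K},\gamma\otimes\operatorname{id})$ so that the action becomes stable, invokes Exel's Theorem 4.21 to present $A$ as the covariance algebra of a partial automorphism $(\theta,I,J)$ of $A^\gamma$, and then cites Katsura's identification of that covariance algebra with $\mathcal{O}_{A^\gamma}(I)$; both arguments finish identically by feeding the resulting Cuntz--Pimsner presentation into Theorems B and C. Your route avoids the stabilization step and the detour through partial automorphisms; its one real obligation, which you correctly isolate, is covariance of the inclusion representation, i.e.\ that $J_{A_1}=\overline{A_1A_1^*}$ and that $\varphi(\lambda(a))=a$ there. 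That step is the content of Katsura's treatment of Hilbert bimodules (Section 3.6 of \cite{KatsuraCorr1}) rather than of \cite{ExelCircleActions}, and it does go through: $\lambda$ carries $\overline{A_1A_1^*}$ isomorphically onto $\mathbb{K}(A_1)$ via $\xi\eta^*\mapsto\theta_{\xi,\eta}$, and $\ker\lambda$ is the annihilator of this ideal, so any $a\in\lambda^{-1}(\mathbb{K}(A_1))\cap(\ker\lambda)^\perp$ differs from an element of $\overline{A_1A_1^*}$ by something in $\ker\lambda\cap(\ker\lambda)^\perp=0$, and on $\overline{A_1A_1^*}$ covariance is immediate. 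What the paper's detour buys is that this ideal computation is delegated to the literature; what yours buys is a shorter, stabilization\-/free and manifestly $\mathbb{T}$\=/equivariant identification.
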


\begin{proof}
Suppose $\gamma : \mathbb{T} \rightarrow \operatorname{Aut}(A)$ is a strongly continuous, semi\-/saturated action such that $A^\gamma$ is AF.  Replacing $A$ with $A \otimes \mathbb{K}$ and $\gamma$ with $\gamma \otimes \operatorname{id}$, we may assume $\gamma$ is stable in the sense of \cite[Definition 4.21]{ExelCircleActions}.  Note that $\gamma$ is still semi\-/saturated and $A^\gamma$ is still AF.

By Theorem 4.21 of \cite{ExelCircleActions}, there are ideals $I$ and $J$ of $A^\gamma$ and an isomorphism $\theta : I \rightarrow J$ such that $A$ is isomorphic to the covariance algebra of the partial automorphism $(\theta, I, J)$.  Note that $I$ may be viewed as a right Hilbert $A^\gamma$\=/module with
\[ x \cdot a = \theta^{-1}(\theta(x)a) \qquad \text{and} \qquad \langle x, y \rangle = \theta(x^*y) \]
for $x, y \in I$ and $a \in A^\gamma$.  Moreover, $I$ is a $C^*$\=/correspondence over $A^\gamma$ with the left action $\lambda: A^\gamma \rightarrow \mathbb{B}(I)$ given by $\lambda(a)(x) = ax$ for $a \in A^\gamma$ and $x \in I$.

Katsura has shown that $\mathcal{O}_{A^\gamma}(I)$ is isomorphic to the covariance algebra of $(\theta, I, J)$ and hence is isomorphic to $A$ (see Section 3.6 of \cite{KatsuraCorr1}).  Since $A^\gamma$ is AF, the result follows from Theorem B and Theorem C.
\end{proof}

\begin{remark}
Given any strongly continuous action $\gamma: \mathbb{T} \rightarrow \operatorname{Aut}(A)$ on a $C^*$\=/algebra $A$, let $A^{(n)}$ denote the $C^*$\=/algebra generated by $A_0$ and $A_n$.  Then there is a strongly continuous, semi\-/saturated action $\gamma^{(n)}: \mathbb{T} \rightarrow \operatorname{Aut}(A^{(n)})$ given by $\gamma^{n}_z(a) = z^{1/n} \gamma(a)$ for $a \in A^{(n)}$ and $z \in \mathbb{T}$, where $z^{1/n}$ is \emph{any} $n$th root of $z$.  Moreover, $A^{(n)}_0 = A_0$ and the span of the $A^{(n)}$, $n \in \mathbb{Z}$, is dense in $A$ (see \cite[Proposition 2.5]{ExelCircleActions}).  It seems likely that one could use this fact to remove the semi\-/saturated assumption in Corollary \ref{cor:CircleActions}, however we were not able to prove this.
\end{remark}

\end{document}